\documentclass[11pt,fullpage, reqno]{amsart}
\usepackage[pagewise]{lineno}
\usepackage{amssymb,amsthm,amsmath}
\usepackage{tikz}
\usetikzlibrary{arrows.meta,decorations.markings}
\usepackage{mathtools}
\usepackage{amsfonts}
\usepackage{amscd}
\usepackage{amssymb}
\usepackage{enumerate}
\usepackage{bbm}
\usepackage{graphicx}
\allowdisplaybreaks
\usepackage{mathabx}
\usepackage{color}
\usepackage{amsbsy}
\usepackage{graphicx}
\usepackage{amsthm}
\usepackage{amsmath}
\usepackage{amsxtra}
\usepackage{mathrsfs}
\usepackage{bbm}
\usepackage{dsfont}
\usepackage{enumitem}
\usepackage{comment}
\usepackage{relsize}
\usepackage{enumerate}
\usepackage{xcolor}
\usepackage{float} 
\usepackage{url}
\usepackage{soul}
\usepackage[hidelinks]{hyperref} 
\usepackage{ifthen}

\newtheorem{theorem}{Theorem}[section]
\newtheorem{lemma}[theorem]{Lemma}
\newtheorem{corollary}[theorem]{Corollary}
\newtheorem{proposition}[theorem]{Proposition}

\theoremstyle{definition}

\theoremstyle{remark}
\newtheorem{Remark}[theorem]{\rm \bf Remark}
\numberwithin{equation}{section}

\newcommand{\C}{\mathbb{C}}

\newcommand{\R}[1]{\mathbb{R}^{#1}}
\newcommand{\bN}{\mathbb{N}}

\newcommand{\x}{\mathbf{x}}

\newcommand{\fa}{\mathfrak{a}}
\newcommand{\fg}{\mathfrak{g}}
\newcommand{\fh}{\mathfrak{h}}
\newcommand{\fk}{\mathfrak{k}}

\newcommand{\fn}{\mathfrak{n}}

\newcommand{\fp}{\mathfrak{p}}
\newcommand{\fl}{\mathfrak{l}}
\newcommand{\fq}{\mathfrak{q}}

\newcommand{\aq}{\mathfrak{a}_q}

\newcommand{\cF}{\mathcal{F}}
\newcommand{\cW}{\mathcal{W}}
\newcommand{\cO}{\mathcal{O}}
\newcommand{\cI}{\mathcal{I}}
\newcommand{\cJ}{\mathcal{J}}

\newcommand{\sS}{\mathscr{S}}

  \newcommand{\on}{\overline{\mathfrak{n}}}

\newcommand{\beas}{\begin{eqnarray*}}
\newcommand{\eeas}{\end{eqnarray*}}
\newcommand{\bes} {\begin{equation*}}
\newcommand{\ees} {\end{equation*}}
\newcommand{\be} {\begin{equation}}
\newcommand{\ee} {\end{equation}}
\newcommand{\bea} {\begin{eqnarray}}
\newcommand{\eea} {\end{eqnarray}}

\let\oldproofname=\proofname
\renewcommand{\proofname}{\rm\bf{\oldproofname}}

\newcommand{\eE}{E^{\circ}}

\renewcommand{\l}{\lambda}

\newcommand{\mr}[1]{\mathrm{#1}}

\title{$L^r$- Schwartz spaces on split rank one semisimple symmetric spaces}
\author{Sanjoy Pusti and Iswarya Sitiraju}
\address{Sanjoy Pusti \endgraf Department of Mathematics, \endgraf INDIAN INSTITUTE OF TECHNOLOGY BOMBAY,
\endgraf Powai, Mumbai-400076, India.}
\email{sanjoy@math.iitb.ac.in}

\address{Iswarya Sitiraju, \endgraf Department of Mathematics, \endgraf INDIAN INSTITUTE OF TECHNOLOGY BOMBAY,
\endgraf Powai, Mumbai-400076, India.}
\email{iswarya@math.iitb.ac.in}

\subjclass[2010]{Primary 43A85, 43A90; Secondary 33C67, 22E30}
\keywords{Semisimple symmetric spaces, $L^r$-Schwartz space, Eisenstein integral}

\begin{document}
\begin{abstract}
    We study the left $K$-invariant $L^r$-Schwartz space and its Fourier transform on split rank one semisimple symmetric spaces $G/H$ for $0<r\leq 2$. We explicitly determine the kernel of the Fourier transform, showing that it is spanned by eigenfunctions corresponding to the discrete spectrum of the Laplace–Beltrami operator on $G/H$.
\end{abstract}

\maketitle

\section{Introduction}

In the classical Euclidean theory on $\R{n}$, the Fourier transform $\cF$ maps a smooth function with rapid decay to a smooth function with rapid decay. More precisely, it is an isomorphism of the Schwartz space $\sS(\R{n})$ onto itself. The inverse Fourier transform is explicitly given and the Fourier transform extends to a unitary operator on $L^2(\R{n})$ yielding the Plancherel theorem. This theory was extended in the non abelian settings with the works of Harish Chandra, who developed the theory of harmonic analysis on the real reductive Lie groups. In particular, Harish Chandra  introduced the notions of Schwartz space, Eisenstein integrals, and wavepackets, and established the Plancherel theorem in a series of papers \cite{HC75,HC76a,HC76b} in 1970's. \\

Harish Chandra also developed the theory of spherical functions in \cite{HC58}. These functions are an analogue of exponential functions on $\R{n}$. Let $G$ be a semisimple linear Lie group with finite center and let $K$ be a maximal compact subgroup of $G$. For spectral parameter $\l$, the spherical functions $\phi_\l$ are matrix coefficients of spherical representations $\pi_\l$ with respect to the $K$-fixed vectors. A complete treatment of Harmonic analysis on Riemannian symmetric space $G/K$ is given in the books by Helgason \cite{He78, He84}. 
  In the Riemannian setting, $(G,K)$ is a Gelfand pair. The work of van Dijk and his students develop the harmonic analysis associated with Gelfand pairs and generalized Gelfand pairs, including the spherical transform and Plancherel theorem (see \cite{vD86,vanDijk2009}).\\

Initial work in the direction of pseudo Riemannian symmetric spaces $G/H$ (where $H$ need not be compact) was carried out on hyperbolic spaces in late 1970's and 1980's, including the works of Rossman \cite{R78}, Strichartz \cite{St73}, Molchanov \cite{M81}, Faraut \cite{F79}, and Shintani \cite{Sh67}. 
Moreover, the harmonic analysis for a comprehensive list of rank one connected non-Riemannian symmetric spaces (up to covering) has been studied (see \cite{vD86,Molchanov1986} and references therein). These include (the list of references is not exhaustive):
\begin{enumerate}
\item The real hyperbolic spaces: $\mathrm{\mathrm{SO}}_e(p, q)/\mathrm{\mathrm{SO}}_e(p-1, q), p>1, q>0$ (see \cite{Gelfand1966,Limim1967,M81,F79,St73,Sh67,R78, Molchanov1986});
\item The complex hyperbolic spaces: $\mathrm{SU}(p, q)/S\left(\mathrm{U}(1)\times\mathrm{U}(p-1, q)\right), p>1, q>0$ (see \cite{F79,Matsumoto1978,Molchanov1986});
\item The quaternion hyperbolic spaces: $\mathrm{Sp}(p, q)/\left(\mathrm{Sp}(1)\times\mathrm{Sp}(p-1, q)\right), p>1, q>0$ (see \cite{F79,Molchanov1986});
\item The octonion hyperbolic space: $F_{4(-20)}/Spin(1, 8)$ (see \cite{Kosters1983,Molchanov1986});
\item $\mathrm{SL}(n, \mathbb R)/\mathrm{GL}_+(n-1, \mathbb R)$, $n>1$ (see \cite{VanDijkPoel1984,Molcanov1982,VanDijkPoel1984, M83,Molchanov1986});
\item $\mathrm{Sp}(n, \mathbb R)/\left(\mathrm{Sp}(1, \mathbb R)\times\mathrm{Sp}(n-1, \mathbb R)\right), n>1$ (see \cite{Kosters1985,Molchanov1986});
\item $F_4(4)/Spin(4, 5)$ (see \cite{Kosters1985,Molchanov1986}).
\end{enumerate} 
The theory of decomposing $L^2$- functions on a wider class of symmetric spaces, known as $K_\epsilon$ spaces, was studied by Oshima and Sekiguchi in \cite{OS80}  for the continuous spectrum. The discrete part in the decomposition of $L^2(G/H)$ is understood from the works of Flensted-Jensen \cite{F80} and Oshima and Matsuki \cite{OM84}. The work on establishing Plancherel theorem for general reductive symmetric spaces was developed further by van den Ban in \cite{vDB88,vdB92} and also in collaboration with Schlichtkrull in \cite{VS97-3}. Simulateneously, Delorme developed the theory of generalized principal series representations that occur in the decomposition of $L^2(G/H)$ independently in \cite{BrylinskiDelorme1992,CarmonaDelorme1994, CD98, Delorme1994, Delorme1997} with Brylinski and Carmona. In contary, the Plancherel and Paley Wiener theorem was proved by establishing Fourier inversion formula by van den Ban and Schlichtkrull \cite{vS99,vdS05, VS06,vDS05-2}. For a complete overview of the general theory see \cite{AO05, HS94} and references therein.\\

Building on Harish-Chandra's work on $L^2$ Schwartz space of $K$ biinvariant functions, the $L^r$ Schwartz spaces of $K$ biinvariant functions was first studied by Trombi and Varadarajan in \cite{TrombiVaradarajan1971} for $0<r<2$. A shorter proof was later given by Anker in \cite{Anker1991} using Abel transform. The $L^r$ Schwartz space isomorphism theorem for functions on $G/K$ was addressed in \cite{EguchiKowata1977} (also see \cite{GV88}). The Plancherel theorem including the Fourier inversion formula for $L^2$ -Schwartz space $\mathcal{C}^2(G/H)$ which are $K$-finite was also established by van den Ban and Schlichtkrull in \cite[Thm 21.2]{vdS05} and independently by Delorme in \cite[Thm. 2]{D99}. In the case of pseudo Riemannian real hyperbolic spaces $SO_e(p,q)/SO_e(p-1,q)$ for $p,q>1$ , the Fourier transform on $L^r$ Schwartz space and its image were studied in \cite{A01} for fixed $K$-type. \\

Let $G/H$ be a connected symmetric space of split rank one and $K$ be a maximal compact subgroup of $G$. Let $(\mathfrak{a}_q,\Sigma,m)$ be the root system associated with $(G,H,K)$ and $W$ be the Weyl group corresponding to the root system $(\mathfrak{a}_q,\Sigma)$. In the split rank one cases, $\fa_q \cong \R{}$. Let $\cW$ be a specific subgroup of the Weyl group $W$ that arises from the structure of $(G,H,K)$. For a left $K$-invariant functions on $G/H$, the Fourier transform is defined using the Eisenstein integrals $E(\l,\eta)(\cdot)$ for $\l \in \C$ and $\eta \in \C^\cW$. The normalized Eisenstein integrals $\eE$ are defined as the  matrix coefficients of $K$-finite vectors with $H$-fixed vectors of parabolically induced representations with certain normalization of $E$. These functions were first introduced in \cite{vdB92} and also studied in \cite{VS97,VS97-2}. The left $K$ invariant Eisenstein integrals are the generalizations of the elementary spherical functions $\phi_\l$ on the Riemannian symmetric spaces that is, when $H=K$ (cf. \cite{HC58}) and the spherical functions introduced by Oshima and Sekiguchi for the $K_\epsilon$ symmetric spaces introduced in \cite{OS80}.  Unlike the Riemannian case, $ \l \mapsto E(\l,\cdot)(\cdot)$ is a meromorphic function. For left $K$-invariant normalized Eisenstein integral $\eE$ on the split rank one symmetric spaces, the Helgason-Johnson's type theorem and Hausdorff-Young's inequality were studied while tackling the poles of $\eE$ in \cite{PS25}. 
\\

In this article, we study the left $K$-invariant $L^r$ Schwartz space on $X = G/H$, denoted by $\mathcal{C}^r(G/H)^K$, for $0 < r \leq 2$ and their Fourier transform. For a fixed $0 < \epsilon_0 < 1/2$, consider the strip \[S_r := \{\l \in \C: -\gamma_r\leq \Re\l\leq \epsilon_0\}=i\R{} + [-\gamma_r,\epsilon_0],\]
where 
\[\gamma_r = \left(\frac{2}{r} -1\right)\rho.\]
Then the Schwartz  space $\sS(S_r)_e$ on the spectral side is defined as the space of certain $\C^\cW$-valued meromorphic functions on $S_r^o$. We show that the Fourier transform $\cF$ maps $\mathcal{C}^r(G/H)^K$ to $\sS(S_r)_e$ surjectively. We extend the work of Andersen \cite{A01} to the split rank one semisimple symmetric spaces which includes the space $SO_e(p,1)/SO_e(p-1,1)$ when, referring to the list above, $q = 1$. Moreover, we explicitly determine the kernel of the Fourier transform. Let $L$ denote the set of finitely many simple poles of $\eE$ for $\Re \l >0$ given by
\[L = \{\l_k:= \rho -m_1^+-m_2^+-1-2k: k \in \bN_0\,\text{and} \, \rho -m_1^+-m_2^+-1-2k>0\},\]
where the constants $\rho, m_1^+,m_2^+$ are described in Section 2, which depend on the structure of $(G,H,K)$. 
Define
\[\Phi_{-\l_k}^0(t) = (2\cosh t)^{-\l_k-\rho}{}_2F_1\left(\frac{\rho+\l_k}{2}, -k; 1+\l_k; \cosh^{-2}t \right),\]
for $\l_k \in L$. We set
\bes L_r = \{\l \in L: \l  < \gamma_r\} \text{ and } L_r^c=\{\l\in L: \l>\gamma_r\},
\ees for $0 < r\leq 2$. Then 
\[L_r = L \text{ and } L_r^c=\emptyset\]
for $0<r<1$. Below is our main theorem (see Section 3 for the notations):
\begin{theorem}\label{thm:L-r-schwartz}  Let $0 < r \leq 2$ such that $\gamma_r$ is not a singularity of $\eE$. 
\begin{enumerate}
    \item  The Fourier transform $\cF: \mathcal{C}^r(X)^K \longrightarrow \mathscr{S}(S_r)_e$ is a continuous linear  map.
    \item The map $\cI_r: \sS(S_r)_e \longrightarrow \mathcal{C}^r(X)^K$ is a continuous linear  map. 
    \item Moreover, the operator $\cF \cI_r$ is an identity operator on $\sS(S_r)_e$.
    \item Let $r \geq 1$,
    \begin{enumerate}
        \item If $L$ is non empty then the kernel of $\cF$ is the space spanned by the functions $\{\Phi^0_{-\l_k}(t): \l_k \in L_r^c\}$.
        \item If $L$ is empty, then the Fourier transform $\cF$ is an injective linear  map.
    \end{enumerate} 
    \item If $0<r<1$, then the Fourier transform $\cF$ is an injective linear  map.
\end{enumerate}
    
\end{theorem}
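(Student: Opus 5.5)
The plan is to reduce everything to one-variable analysis on $\fa_q\cong\R{}$, in the spirit of Anker's approach for $G/K$. By the Cartan-type decomposition $G=K A_q H$, each $f\in\mathcal{C}^r(X)^K$ becomes a function of $t\in\R{}$, one component for each element of $\cW$. The relevant seminorms are weighted sup-norms of $D f$, where $D$ is a polynomial in the Laplace--Beltrami operator, with weight $(1+|t|)^N\Xi(t)^{-2/r}$ and $\Xi(t)\asymp e^{-\rho|t|}(1+|t|)^{c}$.

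\textbf{Part (1).} Write $\cF f(\l)$ as an integral of $f$ against $\eE(\l,\eta)(a_t)$ with the Jacobian $\delta(t)\asymp e^{2\rho|t|}$. Then use the estimate from \cite{PS25}: away from its poles, $|\eE(\l)(a_t)|\lesssim (1+|t|)^{c}e^{(|\Re\l|-\rho)|t|}$.
\begin{itemize}
\item For $-\gamma_r\le \Re\l\le\epsilon_0$, the product $f\cdot\eE\cdot\delta$ is bounded by $(1+|t|)^{-N}e^{(2\rho/r-\rho+|\Re\l|-\rho)|t|}$, which is integrable when $|\Re\l|\le\gamma_r$. On the part of the strip where $\Re\l>0$ one only needs $\epsilon_0<\gamma_r$, or uses the shifted strip.
\item The poles of $\eE$ lie inside the strip. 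Those with $\Re\l>0$ are at $L$; the hypothesis that $\gamma_r$ is not a singularity keeps the boundary line $\Re\l=-\gamma_r$ pole-free. The poles are absorbed into the definition of $\sS(S_r)_e$ (meromorphic functions with prescribed residue behaviour).
\item Polynomial decay in $\l$ comes from the identity $\cF(Df)=p(\l)\cF f$.
\item Derivatives in $\l$ are handled by Cauchy estimates on slightly smaller strips, or by differentiating $\eE$ and picking up factors of $t$.
\end{itemize}
Continuity then follows because each seminorm of $\cF f$ is dominated by finitely many seminorms of $f$.

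\textbf{Parts (2) and (3).} Define $\cI_r h$ by the inversion integral along $i\R{}$ against the Plancherel density $|c(\l)|^{-2}$ (the $c$-function of the Eisenstein integral). To get $L^r$ decay, shift the contour to $\Re\l=-\gamma_r$ (or $+\gamma_r$ after using the $\cW$-symmetry encoded by the subscript $e$). Two ingredients are needed:
\begin{itemize}
\item Harish-Chandra type expansions of $\eE$ for large $t$, giving the factor $e^{(-\gamma_r-\rho)|t|}=e^{-2\rho|t|/r}$ times polynomial factors.
\item Integration by parts in $\l$ for the polynomial decay in $t$, as in the Euclidean and $G/K$ cases.
\end{itemize}
In the contour shift, residues at poles of the $c$-function or of $\eE$ with $0<\Re\l<\gamma_r$ may appear. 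The definition of $\sS(S_r)_e$ should make these cancel or be $L^r$-admissible, and I would check them using the explicit ${}_2F_1$ form of $\eE$ in the split rank one case. Identity (3), $\cF\cI_r=\mathrm{id}$, follows from the $L^2$ Plancherel and inversion theorem of van den Ban--Schlichtkrull/Delorme. One verifies it on the dense subspace of compactly supported spectral data, then extends by the continuity from (1) and (2).

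\textbf{Parts (4) and (5).} These follow from (3) together with the Plancherel decomposition, by the following steps.
\begin{itemize}
\item $f-\cI_r\cF f$ lies in the kernel. By the $L^2$ theory ($\mathcal{C}^r\subset\mathcal{C}^2\subset L^2$), any $f$ in the kernel lies in the discrete part of $L^2(X)^K$.
\item That discrete part is spanned by the residues of $\eE$ at $\l_k\in L$. These are exactly the functions $\Phi^0_{-\l_k}$, which I would verify from the hypergeometric formula: a terminating ${}_2F_1$ because of the $-k$ parameter.
\item A function $\Phi^0_{-\l_k}$ decays like $e^{-(\l_k+\rho)|t|}$. It lies in $\mathcal{C}^r$ iff $\l_k+\rho>2\rho/r$, i.e.\ $\l_k>\gamma_r$, i.e.\ $\l_k\in L_r^c$.
\item Functions with $\l_k\in L_r$ are not in $\mathcal{C}^r$. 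For $r<1$ we have $L_r^c=\emptyset$, which gives (5); (4b) is immediate.
\item Conversely, $\cF\Phi^0_{-\l_k}=0$ for $\l_k\in L^c_r$ by orthogonality of discrete and continuous spectra, justified by absolute convergence on the strip.
\end{itemize}

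The main obstacle I expect is Part (2): proving the $L^r$ decay of $\cI_r h$ through the contour shift. This requires uniform asymptotics of $\eE(\l)(a_t)$ on the closed strip near poles, and careful bookkeeping of residues.
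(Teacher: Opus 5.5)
Your treatment of Parts (1), (4) and (5) matches the paper's approach. It uses the weighted estimate of $\eE$, the identity $\cF((\Delta+\rho^2)f)=\l^2\cF f$, the $L^2$ inversion formula together with the decay rates of $\Phi^0_{-\l_k}$, and self-adjointness of $\Delta$ to get $\cF\Phi^0_{-\l_k}=0$. Your caveat about the part $0<\Re\l\le\epsilon_0$ of the strip is well taken. The genuine gap is in Part (2).

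You define $\cI_r h$ as the inversion integral over $i\mathbb{R}$, i.e.\ the wave packet $\cJ h$; with the normalized $\eE$ no Plancherel density appears there. You then hope that the residues crossed when moving the contour to $\Re\l=-\gamma_r$ either cancel or are $L^r$-admissible. Neither is true.

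Using $h(-\l)=C^0(-1,\l)h(\l)$ and $\eE_w(\l,\eta)=\eta_w[\Phi_\l+c(\l)\Phi_{-\l}]$, one gets $\cJ h(a_tw\cdot x_0)=2\int_{i\mathbb{R}}\Phi_\l(t)[h(\l)]_w\,d\l$. Since $\Phi_\l$ is holomorphic for $\Re\l<0$, the only poles crossed are those of $h$ at $-\l_k$ with $\l_k\in L_r$. Each contributes $4\pi i\,\Phi^0_{-\l_k}(t)\,\mathrm{Res}_{\l=-\l_k}[h(\l)]_w$. The space $\sS(S_r)_e$ allows arbitrary residues there. By your own Part (4) computation, $\Phi^0_{-\l_k}\notin\mathcal{C}^r(X)^K$ exactly when $\l_k<\gamma_r$, so the crossed residues are precisely the non-admissible ones.

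Concretely, for $f\in C_c^\infty(X)^K$ the inversion formula gives $\cJ\cF f=f-4\pi i\sum_{\l_k\in L}\Phi^0_{-\l_k}\,\mathrm{Res}_{\l=-\l_k}[\cF f(\l)]_w$. This lies outside $\mathcal{C}^r(X)^K$ as soon as a residue with $\l_k\in L_r$ is nonzero, since the $\Phi^0_{-\l_k}$ have distinct decay rates. So with your definition, (2) fails whenever $L_r\neq\emptyset$.

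The missing idea is to take the shifted integral as the definition: $\cI_r h(a_tw\cdot x_0)=2\int_{\Re\l=-\gamma_r}\Phi_\l(t)[h(\l)]_w\,d\l$. Then $\cJ h=\cI_r h+4\pi i\sum_{\l_k\in L_r}\Phi^0_{-\l_k}\,\mathrm{Res}_{\l=-\l_k}[h(\l)]_w$, so the non-admissible discrete terms are simply discarded. The $\mathcal{C}^r$ bounds come from three ingredients:
\begin{itemize}
\item the series $\Phi_\l(t)=e^{(\l-\rho)t}\sum_m\Gamma_m(\l)e^{-mt}$;
\item the bounds $|\Gamma_m^{(n)}(\l)|\le M(1+m)^{\chi}$ on the line $\Re\l=-\gamma_r$, where $h$ is pole-free by the hypothesis on $\gamma_r$;
\item Euclidean Fourier estimates.
\end{itemize}
An expansion of $\eE$ itself would not suffice, since its $c(\l)\Phi_{-\l}$ part only decays like $e^{(\gamma_r-\rho)t}$ on that line.

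Part (3) then needs a step absent from your plan. The discarded $\Phi^0_{-\l_k}$ lie in $\mathcal{C}^2(X)^K$ and are eigenfunctions of $\Delta$ with eigenvalue $\l_k^2-\rho^2$, which is not attained on $i\mathbb{R}$. Hence $\cF$ annihilates them; the paper does this by applying $\pi(\Delta)$. It follows that $\cF\cI_r h=\cF\cJ h=h$ on $i\mathbb{R}$, and on $S_r$ by meromorphic continuation. Your density argument for (3) would not work as stated in any case, because compactly supported spectral data do not belong to $\sS(S_r)_e$ for $r<2$.
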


To prove the theorem we adapt the main ideas of the proof in \cite{A01} and \cite{TrombiVaradarajan1971}. Our main references for this article are \cite{A01,AO05,HS94}. Throughout this article, we denote $\bN_0 = \{0,1,\ldots\}$
and $X = G/H$.

\subsection*{Acknowledgements} We would like to express our sincere gratitude to Gestur \'Olafsson for his guidance and valuable suggestions.

\section{Split rank one symmetric spaces}
Let $G$ be a connected semisimple linear Lie group, $H$ be a connected symmetric subgroup of $G$ and $K$ a maximal compact subgroup of $G$. Let $\theta$ be the Cartan involution and $\sigma$ be a involution such that $H=G^\sigma_e,$ where $G^\sigma$ is the subgroup of fixed points for $\sigma$ and $G^\sigma_e$ is the identity component of $G^\sigma$. We  assume that $\sigma\theta=\theta\sigma$. Let \bes \mathfrak{g}=\mathfrak{h}\bigoplus \mathfrak{q}, \ees be the decomposition of $\mathfrak{g}$ induced by $\sigma$.  Also let \bes \fg=\mathfrak{k}\oplus\fp, \ees be the Cartan decomposition. Since, $\sigma$ and $\theta$ commute we have the joint decomposition \bes \fg=\mathfrak{k}\cap\fh \bigoplus \fp\cap \fh  \bigoplus \mathfrak{k}\cap \fq \bigoplus\fp\cap\fq .\ees Let $\aq$ be the maximal semisimple abelian subspace of $\fp\cap\fq$ and $A_q=\exp{\aq}$. The dimension of $\fa_q$ is called {\em split rank} of the semisimple symmetric spaces $G/H$.  Let $\mathfrak{a}_1$ be a maximal abelian subspace of $\fq$. The dimension of $\fa_1$ is called {\em rank} of the semisimple symmetric spaces $G/H$.

In this paper we restrict ourselves to the case when the split rank of $G/H$ is one. We have $\fa_q = \R{} T$ for some $T \in \fp \cap \fq$ and parametrize the elements of $A_q$ with $a_t$ for $t \in \R{}$. Then the root system $\Sigma(\fa_q, \fg)$ is $\{\pm \alpha\}$ or $\{\pm \alpha, \pm 2\alpha\}$ and denote $\fg_\alpha$ for the corresponding root space.  
Let \[{\mathfrak{n}} = \sum_{\alpha >0}\fg_\alpha, \quad\overline{\mathfrak{n}}  = \sum_{\alpha <0}\fg_\alpha.\]
Observe that $\on = \theta ({\mathfrak{n}})$. Let $\fl$ be the centralizer of $\fa_q$ in $\fg$ and
 \[\mathfrak{l}_{kq} = \fl \cap (\mathfrak{k}\cap \mathfrak{q}).\] From \cite[Lemma 3.3]{vB87} we have the decomposition
\begin{equation}\label{eq:decompose}
    \fg = \overline{\mathfrak{n}} \oplus \mathfrak{l}_{kq} \oplus \fa_q \oplus \mathfrak{h}.
\end{equation}

 For $\Sigma(\fa_q,\fg)$, the associated Weyl group is $W = \{\pm1\}$.  Let $W_{K\cap H} = N_{K\cap H}(\fa_q)/Z_{K\cap H}(\fa_q)$ and define
\bes \mathcal W=W/W_{K\cap H}.\ees For the split rank one case $\cW$ is either $\{1\}$ or $\{\pm 1\}$.
   Let $\fg^\pm = \fg^{\pm \sigma \theta}$, that is \bes \fg_+=\mathfrak{k}\cap\fh \bigoplus \fp\cap\fq \text{ and } \fg_-=\mathfrak{k}\cap \fq \bigoplus \fp\cap \fh.\ees 
  Let $m_\alpha = \mr{dim} \fg_\alpha$ be the multiplicity of $\alpha$ and let $m_\alpha^\pm=\dim \fg_\alpha^\pm$, where $\fg_\alpha^\pm = \fg_\alpha \cap \fg^\pm$. Let $H_0\in \aq$ be a unique element such that $\alpha(H_0)=1$.  Let \bes m_1=m_{\alpha},m_1^+=m_{\alpha}^+, m_1^{-}=m_{\alpha}^{-} ;\,\, m_2=m_{2\alpha}, m_2^+=m_{2\alpha}^+, m_2^-=m_{2\alpha}^-.
\ees   
Then the following are true for split rank one cases (\cite[p. 3]{S90}): 
\begin{enumerate}
    \item $m_1^+ + m_1^->0,$
    \item If $m_2^->0$ then $m_1^+=m_1^-$,
    \item $m_1^-=m_2^-=0$ if and only if $(\mathfrak{g}, \mathfrak{h}$) is Riemannian.
    \item If $m_1^+=m_2^-=0$ then $(\mathfrak{g}, \mathfrak{h}$) is of $\mathfrak{k}_\epsilon$ type.
\end{enumerate}

The constant $\rho$ is defined as
\begin{equation}
    \rho = \frac{1}{2}(m_1 + 2m_2 )= \frac{1}{2}(m_1^+ + m_1^{-} + 2m_2^{+} + 2m_2^{-}).
\end{equation}
and the Jacobian as 
\begin{equation}\label{eq: Jacobian}
    J(t) = (\sinh t)^{m_1^+}(\cosh t)^{m_1^-}(\sinh 2t)^{m_2^+}(\cosh 2t)^{m_2^-}.
\end{equation}
for $t \in \R{}$. According to \cite[Thm. 10]{R79}, every element $g\in G$ can be decomposed as \be \label{decomposition} g=ka_th, \quad k\in K, t \in \R{},  \text{ and } h\in H,
\ee and $a_t$ is unique upto conjugation by $W_{K\cap H}$. From this we get a polar decomposition map given by 
\[K/ Z_{K\cap H}(\fa_q) \times A_q \rightarrow G/H\]
which is a surjection (see \cite[Cor. 11]{R79}). 
Consider the disjoint union \[X_+ = \bigcup_{w \in \cW}KA_q^+w H,\]

where $A_q^+$ is a positive Weyl chamber. When viewed as a subset of $G/H$, the set $X_+$ is open and dense.  Then there exists a unique $G$- invariant measure on $X$ such that for $f \in L^1(X)$
\begin{equation}\label{eq:measure}
    \int_{X} f(x)dx = \sum_{w\in \cW}\int_K \int_{0}^{\infty}f(ka_twH)J(t)dt dk,
\end{equation}
where $dk$ is the normalized Haar measure on $K$ (see \cite[Thm. 3.9, p.24]{AO05} and \cite[Thm. 8.1.1]{S84}).

Let $P$ be a $\sigma$-minimal parabolic subgroup, i.e, a parabolic subgroup satisfying $\sigma \theta P = P$ and is minimal among all other parabolic subgroups satisfying this identity.  We  have the following theorem (see \cite[Thm. 13]{R79}):

\begin{theorem}
    Let $P$ be a $\sigma$-minimal parabolic subgroup of G and $\cW$ be as defined above.  Then
    \begin{enumerate}
        \item For each $\omega \in  \cW$, the orbit $\cO_\omega = P\omega H$ is open in $G$.
        \item The orbits $\cO_\omega$ are mutually disjoint.
        \item The disjoint union $\bigcup_{\omega \in \cW}\, \cO_{\omega}$ is dense in $G$.
    \end{enumerate}
\end{theorem}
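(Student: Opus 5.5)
Since $X=G/H$ is homogeneous, I would argue with the multiplication maps $\mu_\omega: P\times H\to G$, $(p,h)\mapsto p\,\omega h$, where each $\omega\in\cW$ is represented by an element of $N_K(\fa_q)$. The three assertions are treated in turn: openness by computing the differential of $\mu_\omega$, disjointness by an invariant that separates the orbits, and density by dimension counting over the finitely many $P\times H$-orbits. I expect disjointness to be the main obstacle.

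\emph{Openness.} The map $\mu_\omega$ is $P\times H$-equivariant, so the orbit $\cO_\omega$ is open as soon as $d\mu_\omega$ is surjective at $(e,e)$. Its image is $(\fp_P+\mathrm{Ad}(\omega)\fh)\,\omega$, where $\fp_P=\fl\oplus\fn$ is the Lie algebra of $P$. So the claim reduces to $\fg=\fp_P+\mathrm{Ad}(\omega)\fh$. For $\omega=e$ the argument goes as follows.
\begin{enumerate}
\item The condition $\sigma\theta P=P$ gives $\sigma\theta\fn=\fn$, hence $\sigma\fn=\theta\fn=\on$.
\item Take $X\in\on$ and write $X=(X+\sigma X)-\sigma X$. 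Here $X+\sigma X\in\fh$ and $\sigma X\in\fn$, so $\on\subset\fn+\fh$.
\item Combined with $\fg=\on\oplus\fl\oplus\fn$, this gives $\fg=\fp_P+\fh$. This is consistent with the decomposition \eqref{eq:decompose}.
\end{enumerate}
For general $\omega$, note that $\mathrm{Ad}(\omega)\fh$ is the fixed-point algebra of $\sigma'=\mathrm{Ad}(\omega)\sigma\mathrm{Ad}(\omega)^{-1}$. Since $\omega\in K$ normalizes $\fa_q$, the involution $\sigma'$ commutes with $\theta$ and acts by $-1$ on $\fa_q$. Hence $\sigma'\theta$ fixes $\fa_q$ pointwise, preserves each $\fg_\alpha$, and therefore $\sigma'\fn=\on$. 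The same three steps then apply with $\sigma'$ in place of $\sigma$.

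\emph{Disjointness.} I would use the map $\tau(g)=g\,\sigma(g)^{-1}$, which is right $H$-invariant. Since $\sigma(P)=\theta(P)=\overline{P}$, it satisfies $\tau(p\,\omega h)=p\,\omega\sigma(\omega)^{-1}\sigma(p)^{-1}\in P\,\omega\sigma(\omega)^{-1}\,\overline{P}$. Because $\omega$ normalizes $\fa_q$ and $\sigma=-1$ on $\fa_q$, one may take $\omega\sigma(\omega)^{-1}$ to be a representative of the Weyl element attached to $\omega$, adjusted by an element of $M$. The Bruhat decomposition with respect to $(P,\overline{P})$ then separates the classes $\omega$ modulo $W_{K\cap H}$.

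This is the step I expect to be hardest, since one must check carefully that the classes modulo $W_{K\cap H}$ are exactly what $\tau$ detects. If this bookkeeping becomes unwieldy, I would fall back on the polar decomposition \eqref{decomposition}, namely the uniqueness of $a_t$ up to $W_{K\cap H}$, to compare the $A_q$-components of the two orbits.

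\emph{Density.} $P$ has finitely many orbits on $G/H$ (Matsuki's finiteness, or directly from the decomposition $G=KA_qH$ in split rank one). The orbits other than the $\cO_\omega$ are non-open, hence lower-dimensional submanifolds, and a finite union of such sets has empty interior. It remains to check that every open orbit is of the form $\cO_\omega$. By the computation above, an orbit $PgH$ is open exactly when $\fp_P+\mathrm{Ad}(g)\fh=\fg$. Using $g=ka_th$ and moving $k$ into $P$ via $G=PK$, this condition forces $g\in P\omega H$ for some $\omega\in\cW$. Therefore $\bigcup_{\omega\in\cW}\cO_\omega$ is dense in $G$.
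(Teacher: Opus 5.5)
The paper does not prove this theorem; it quotes it from Rossmann \cite[Thm.~13]{R79}, so there is no internal argument to compare with. Your openness argument is correct and complete, including the check that $\sigma'=\mathrm{Ad}(\omega)\sigma\mathrm{Ad}(\omega)^{-1}$ still satisfies $\sigma'\fn=\on$. The disjointness and density steps, however, have genuine gaps.

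\textbf{Disjointness.} Your key claim, that $\omega\sigma(\omega)^{-1}$ represents the Weyl element of $\omega$, is false.

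Since $\sigma=-1$ on $\fa_q$, we have $\mathrm{Ad}(\sigma(\omega))|_{\fa_q}=\sigma\,\mathrm{Ad}(\omega)\,\sigma|_{\fa_q}=\mathrm{Ad}(\omega)|_{\fa_q}$. Hence $m_\omega:=\omega\sigma(\omega)^{-1}\in Z_K(\fa_q)\subset P$ for \emph{every} $\omega$. For example, for $G=\mathrm{SL}(2,\mathbb{R})$, $\sigma=\mathrm{Ad}(\mathrm{diag}(1,-1))$ and $\omega$ the rotation by $\pi/2$, one gets $m_\omega=-I$. So $\tau(\cO_\omega)\subset P\overline{P}$ for all $\omega$, and the Bruhat decomposition separates nothing.

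There is also a structural obstruction. $\tau$ is right $G^\sigma$-invariant, so an invariant of $\tau(g)$ alone cannot distinguish $W_{K\cap H}$-classes from $W_{K\cap G^\sigma}$-classes. Your fallback does not help either: $\omega,\omega'\in K$ both have trivial $A_q$-component in $KA_qH$.

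$\tau$ can still be used, but through uniqueness of the factorization $N\times L_P\times\overline N\to G$, where $L_P=Z_G(\fa_q)$. Suppose $\omega'=nl\omega h$ with $n\in N$, $l\in L_P$, $h\in H$. Then
\[
m_{\omega'}=\tau(\omega')=n\,\bigl(l m_\omega\sigma(l)^{-1}\bigr)\,\sigma(n)^{-1},
\]
with $\sigma(n)\in\overline N$ and $m_{\omega'}\in L_P$. Uniqueness of the factorization forces $n=e$.

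Then $h=\omega^{-1}l^{-1}\omega'$ normalizes $\fa_q$ and acts on it as $\omega^{-1}\omega'$ does. Write $h=k\exp Y$ with $k\in K\cap H$ and $Y\in\fp\cap\fh$. The element $\theta(h)^{-1}h=\exp 2Y$ normalizes $\fa_q$, and $\mathrm{ad}\,Y$ is real diagonalizable, so $[Y,\fa_q]\subset\fa_q\cap\fk=0$. Thus $k\in N_{K\cap H}(\fa_q)$ acts on $\fa_q$ as $\omega^{-1}\omega'$, so $\omega$ and $\omega'$ define the same class in $\cW$.

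\textbf{Density.} You correctly reduce to showing that every open $P\times H$-orbit is some $\cO_\omega$. This reduction needs finiteness of $P\backslash G/H$, which must be cited (Matsuki, Wolf); it does not follow ``directly'' from $G=KA_qH$. The reduced statement is the real content of (3), and ``moving $k$ into $P$ via $G=PK$'' does not prove it. Indeed $K\cap P=Z_K(\fa_q)$, and nothing in your argument shows that $\fp_P+\mathrm{Ad}(g)\fh=\fg$ forces $g\in P\,N_K(\fa_q)\,H$.

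A workable route, which avoids finiteness altogether, again uses $\tau$:
\begin{enumerate}
\item $\tau^{-1}(P\overline P)$ is open and dense, because its complement is a proper analytic subset of the connected group $G$.
\item For $g$ in this set, uniqueness in $NL_P\overline N$ together with $\sigma(\tau(g))=\tau(g)^{-1}$ gives $\tau(g)=nl\sigma(n)^{-1}$.
\item Hence $g'=n^{-1}g$ satisfies $\tau(g')\in L_P$, so $\mathrm{Ad}(g')^{-1}\fa_q\subset\fq$ is a line of hyperbolic elements.
\item A genuine structural input is then needed: every hyperbolic element of $\fq$ is $\mathrm{Ad}(H)$-conjugate into $\fa_q$. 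This gives $g'\in N_G(\fa_q)H=L_P\,N_K(\fa_q)\,H$, and therefore $g\in P\omega H$ for some $\omega$.
\end{enumerate}
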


In the split rank one cases, there are two different cases: $\cW = \emptyset$, in which case $PH$ is open and dense in $X$ and $\C^\cW = \C$. The second one being $\cW = \{-1,1\}$, in which case there are two open $P$-orbits in $X$ and $\C^\cW = \C^2$. The first case is the setup assumed by Delorme in his contributions to \cite{AO05}.

\subsection{Eisenstein Integrals} 
For $\eta \in \C^{\cW}$, $\l \in \C$ such that $\Re \l > \rho$, we define
\[j(\l,\eta) ( x) = \begin{cases}
    a^{\l-\rho}\eta_w &\text{if} \; x = man\,wh \in \cO_w\\
    0 &\text{otherwise},
\end{cases} \]
where $\eta_w$ is the $w$-th coordinate of $\eta \in \C^\cW$. The left $K$-invariant Eisenstein integrals are defined as (\cite{VS97-3}): 
\[E(\l,\eta)(gH) = \int_K j(\l,\eta)(g^{-1}k)dk,\]
where $dk$ is the normalized Haar measure on $K$. Later, these Eisenstein integrals were identified as matrix coefficients of generalized principal series representations in \cite{vDS05-2}. The matrix coefficients of generalized principal series representations were initially studied by Delorme in \cite{D99}, and van den Ban and Schlitchkrull subsequently established this connection in \cite{vDS05-2}.
Let $P$ be the $\sigma$ minimal parabolic subgroup and $P = MAN$ be the Langlands decomposition. Let $\pi_{1, \l}= \mr{Ind}_{P}^G(1 \otimes \l \otimes 1)$ be the parabolically induced representations on the space of continuous functions $C(K/M)$. The representations $\pi_{1,\l}$ are unitary when $\l \in i\R{}$. The left $K$-invariant Eisenstein integrals, denoted by $E(\l)$, are linear combinations of matrix coefficients for the induced representations $\pi_{1,\l}$ with $\l \in \C$. Let $1_{-\l}$ be the $K$-fixed vector as a constant function $1$ on $K$. Observe that for each $\eta \i \C^\cW$, $j(\l,\eta) \in C^{-\infty}(K;1)^H$ is an $H$-fixed distribution of $\pi_{\l,1}$ for $\l \in \C$ such that $\Re \l > \rho$. 
\begin{theorem}
    The map $\l \mapsto j(\l,\eta)$ can be extended as a meromorphic function on $\C$ for each $\eta \in \C^\cW$. Moreover, for almost all $\l \in \C$, the map $j(\l,\cdot) : \C^{\cW} \rightarrow C^{-\infty}(K,1)^H$ is a bijection.
\end{theorem}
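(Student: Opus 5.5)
The plan is to follow van den Ban's approach for $\sigma$-minimal principal series, made explicit in split rank one. Throughout I identify $C^{-\infty}(K;1)^H$ with the $H$-invariant distribution vectors of $\pi_{1,\l}$, realized in the compact picture on $K/M$. The proof has three steps: continuation, injectivity, surjectivity.

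\emph{Meromorphic continuation.} For $\Re\l>\rho$ and $k\in K\cap \cO_w$, write $k = m\,a(k)\,n\,w h$. The restriction $j(\l,\eta)|_K$ is then $\sum_w \eta_w\,\mathbf 1_{\cO_w}(k)\,a(k)^{\l-\rho}$. I would express $a(k)$ through a real-analytic function on $K$. In split rank one, pairing with an $H$-fixed vector of a finite-dimensional spherical representation, as in the standard construction of $\sigma$-minimal orbit coordinates, gives a real-analytic $Q$ on $K$ with $a(k)^{2\alpha}=|Q(k)|$ on $\cO_w$. The sign of $Q$ separates the (at most two) open orbits, and $Q$ vanishes exactly on the complement of $\bigcup_w\cO_w\cap K$. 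Consequently $j(\l,\eta)|_K = \sum_w \eta_w\, Q_{w}^{(\l-\rho)/2}$, where $Q_w = \max(\pm Q,0)$ is chosen according to the orbit.

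I would then invoke the Bernstein--Gelfand/Atiyah theorem: $s\mapsto Q_\pm^{s}$ extends to a meromorphic $C^{-\infty}(K)$-valued function of $s$, with poles in finitely many arithmetic progressions. A more hands-on alternative is to use the polar coordinates of \eqref{eq:measure} to reduce the pairing against $K$-finite test functions to one-variable integrals $\int_0^\infty (\sinh t)^{\cdots}(\cosh t)^{\cdots}\,dt$. These are Beta/Gamma functions and continue explicitly. Either way, the continued family remains right $M$-equivariant and $H$-invariant, because $H$-invariance is an identity between meromorphic functions that holds for $\Re\l>\rho$.

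\emph{Injectivity for almost all $\l$.} For $\Re\l>\rho$, the restriction of $j(\l,\eta)$ to the open set $\cO_w\cap K$ is $\eta_w a^{\l-\rho}$, so $j(\l,\cdot)$ is injective there. Choose test functions $\varphi_w\in C_c^\infty(\cO_w\cap K)$ and set $D(\l)=\det\big(\langle j(\l,e_{w'}),\varphi_w\rangle\big)_{w,w'}$. This is a $|\cW|\times|\cW|$ meromorphic determinant that is nonzero for $\Re\l>\rho$. Hence $D$ is not identically zero, and its zeros and poles form a discrete set; off that set $j(\l,\cdot)$ is injective.

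\emph{Surjectivity: $\dim C^{-\infty}(K;1)^H\le|\cW|$ for generic $\l$.} I expect this to be the main obstacle. Let $u$ be an $H$-invariant distribution vector. First, the restriction of $u$ to each open orbit $\cO_w$ is determined by a single constant, because $P\times H$ acts transitively there with one-dimensional isotropy character. So $u-j(\l,\eta)$ is supported on the closed complement $\bigcup$ of lower-dimensional $P$--$H$ orbits, for $\eta$ read off from those constants. Second, by Bruhat's support argument, a nonzero $H$-invariant, $P$-equivariant distribution supported on a lower-dimensional orbit forces a relation between $\l$ and the transversal characters (modular functions and normal derivative degrees). Such a relation holds only for $\l$ in a discrete set. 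This is where the split rank one structure, namely the explicit orbit list via the root data $m_1^\pm,m_2^\pm$, should make the computation tractable. Outside the union of these exceptional sets and the singular set of $D$, $j(\l,\cdot)$ is a linear injection $\C^\cW\to C^{-\infty}(K;1)^H$ into a space of dimension at most $|\cW|$, hence a bijection.
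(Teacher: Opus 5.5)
\textbf{The paper's proof and your strategy.} The paper does not prove this statement; it cites \cite[Thm.~5.10]{vDB88} and \cite{Olafsson1987}. Your outline follows the strategy of that source: complex powers of a real-analytic function built from a finite-dimensional $H$-spherical representation give the continuation, and a Bruhat-type analysis of distributions supported on the singular orbits gives the generic dimension bound. Your injectivity step is fine.

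\textbf{The gap in the surjectivity step.} You claim that a nonzero $H$-invariant, $(P,\l)$-equivariant distribution supported on a lower-dimensional orbit $PyH$ forces a relation on $\l$. This is asserted, not proved, and it is the heart of the theorem. The inducing character $man\mapsto a^{\l-\rho}$, restricted to the stabilizer $P\cap yHy^{-1}$, only sees the image of $\mathfrak{s}_y=\fp\cap\mathrm{Ad}(y)\fh$ under the projection $\fp=\mathfrak{m}\oplus\aq\oplus\fn\to\aq$. The other ingredients of Bruhat's condition do not involve $\l$: the action of the stabilizer on $S^k$ of the normal space $\fg/(\fp+\mathrm{Ad}(y)\fh)$, and the modular factors. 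So your claim holds only if $\mathfrak{s}_y\not\subset\mathfrak{m}\oplus\fn$ for every non-open orbit. If this failed for even one orbit, the invariance condition on the transversal symbol would not depend on $\l$. Bruhat's method would then give no bound off a discrete set. On the open orbits the opposite holds, $\fp\cap\mathrm{Ad}(w)\fh\subset\mathfrak{m}$, which is why your first sub-step works for every $\l$. You therefore need three things you defer to ``the split rank one structure'':
\begin{enumerate}
\item the structural fact that $\mathfrak{s}_y\not\subset\mathfrak{m}\oplus\fn$ for every non-open double coset $PyH$;
\item the finiteness of $P\backslash G/H$, so that you can peel off the singular orbits one at a time;
\item a check that the exceptional values of $\l$, taken over all transversal orders $k$, do not accumulate.
\end{enumerate}

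\textbf{The same fact is used silently in your continuation step.} Take $F(g)=\langle v^*,\pi(g)v_H\rangle$ with a suitable $P$-eigenvector $v^*$. Then $F(man\,yh)=\xi(m)a^{\mu}F(y)$ with $|\xi|=1$, so $|F|=a^{\mu}|F(w)|$ on $\cO_w$. A power of $|F|$ represents $j(\l,\eta)$ only if two conditions hold:
\begin{itemize}
\item $F(w)\neq 0$ for every $w\in\cW$, which is a genuine condition on the choice of representation;
\item $F$ vanishes on the complement of the open orbits.
\end{itemize}
The second condition comes exactly from the missing fact. An element $m_0a_0n_0$ with $a_0\neq e$ in the stabilizer of each singular point $y$ gives $F(y)=\xi(m_0)a_0^{\mu}F(y)$, which forces $F(y)=0$.

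\textbf{Smaller points.} Your claim that the sign of $Q$ separates the two open orbits is unjustified, but it is also unnecessary. Each $\cO_w\cap K$ is open and closed in $\{Q\neq0\}$, so resolution of singularities continues $\mathbf{1}_{\cO_w}|Q|^{s}$ directly. Your ``hands-on alternative'' does not apply as stated. The formula \eqref{eq:measure} integrates over $G/H$, not over $K/M$. Moreover, continuing the pairings with $K$-finite test functions one $K$-type at a time gives a meromorphic distribution-valued family only if pole sets and growth are controlled uniformly in the $K$-type.
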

The proof of this theorem is given in \cite[Thm. 5.10, p.83]{vDB88} and \cite{Olafsson1987}.
The Eisenstein integrals are represented as 
\[E(\l,\eta)(x) = \langle 1_{-\l}, \pi_{\l,1}(x)j(\l,\eta)\rangle_{L^2(K)}.\]

The function $\l \mapsto E(\l,\eta)(x)$ may have singularities on $i\R{}$. Thus, we consider a normalization by their behavior at infinity which regularizes these functions $i\R{}$. We denote these normalized Eisenstein integral as $\eE(\l, \cdot)(\cdot)$. We will now discuss some properties of Eisenstein integrals. 
\newline

Let $\mathbb{D}(G/H)$ be the space of $G$ invariant differential operator on $G/H$. Then the Laplace-Beltrami operator $\Delta$ on $X$ is an element of $\mathbb{D}(G/H)$. In the rank one case, $\mathbb{D}(G/H)$ is given by the polynomial algebra $\C[\Delta]$ (cf. \cite[Lemma 2.1]{vdB92} and also the discussions in \cite[Part I,p. 25]{AO05} and \cite[Part II, p. 126, 127 and 128]{HS94}). The radial part of $\Delta$ in spherical coordinates is given by (see \cite[p. 8 and 9]{S90} and \cite[Thm. 2.1]{AM90}
\begin{equation}\label{eq:LaplaceBeltrami}
    \begin{split}
        L(\Delta) &= \frac{\partial^2}{\partial t^2} +(m_1^+ \coth t + m_1^- \tanh t + 2m_2^+\coth 2t +2m_2^- \tanh 2t) \frac{\partial}{\partial t}\\
    &= \frac{1}{J(t)}\frac{\partial}{\partial t}\left(J(t)\frac{\partial}{\partial t} \right).
    \end{split}
\end{equation}

Let $x_0 = eH$ be the base point of $G/H$. For a fixed $w \in \cW$ and $\eta \in \C^{\cW}$, the normalized Eisenstein integral satisfies the following differential equation (see \cite[Cor. 16.2]{vdB92})
\[L(\Delta) \eE(\lambda, \eta)(a_t) = (\rho^2-\lambda^2)\eE_w(\lambda,\eta)(a_t).\]

Moreover, it satisfies the asymptotic expansion (\cite[Prop. 7.7]{HS94} and \cite[Sec.16]{vdB92})
\begin{equation}\label{eq:asym}
    \lim_{t \rightarrow \infty} e^{(\rho-\lambda)t}\eE(\lambda,\eta)(a_tw) = \eta_w, \quad \Re \lambda >0,
\end{equation}
where $\eta_w$ is the $w$-th coordinate of $\eta \in \C^\cW$. We define
 \[\eE_w(\lambda,\eta)(t) := \eE_w(\lambda,\eta)(a_t w\cdot x_0).\]
 
By substituting $z = -\sinh^2 t $, the above differential equation can be transformed into a hypergeometric differential equation. Since, $E_w$ is regular at $t=0$ for all $\lambda$ we have
\[\eE_w(\lambda,\eta)(t) = \mr{const.} {}_2F_1\left(\frac{\rho+\lambda}{2}, \frac{\rho-\lambda}{2}; \frac{m_1^++m_2^++1}{2}; -\sinh ^2 t\right).\]

Using the following property (\cite[eq. 17, p. 63]{EMO81})

\begin{equation}\label{eq:hyperasymp}
\begin{split}
{}_2F_1(a, b ; c ; z) & =\frac{\Gamma(c) \Gamma(a-b)}{\Gamma(a) \Gamma(c-b)}(-z)^{-b} F\left(b, 1-c+b ; 1-a+b ; z^{-1}\right)  \\
& +\frac{\Gamma(c) \Gamma(b-a)}{\Gamma(b) \Gamma(c-a)}(-z)^{-a} F\left(a, 1-c+a ; 1-b+a ; z^{-1}\right),
\end{split}
\end{equation}

and from (\ref{eq:asym}) we obtain the explicit form for the constant (see \cite{PS25}):

\begin{equation}\label{eq:eisen}
    \eE_w(\lambda,\eta)(t) = \eta_w 2^{\lambda-\rho}\frac{\Gamma\left(\frac{\rho+\lambda}{2}\right)\Gamma\left(\frac{-\rho+\lambda+m_1^++m_2^++1}{2}\right)}{\Gamma(\lambda)\Gamma\left(\frac{m_1^++m_2^++1}{2}\right)} {}_2F_1\left(\frac{\rho+\lambda}{2}, \frac{\rho-\lambda}{2}; \frac{m_1^++m_2^++1}{2}; -\sinh ^2 t\right).
\end{equation}

Moreover, from \cite[Thm 7.6]{VS97} there exists a unique {\em endomorphism valued} meromorphic function $C^0(-1,\lambda)$ 
on $\C$ such that
\begin{equation}\label{eq: HCexpansion}
    \eE_w(\lambda, \eta)(t) = \Phi_\lambda(t)\eta_w + \Phi_{-\lambda}(t)[C^0(-1, \lambda)\eta]_w
\end{equation}
where \[\Phi_{\lambda}(t) = e^{(\lambda-\rho)t}\sum_{m=0}^\infty \Gamma_m(\lambda)e^{-mt},\]
with $\Gamma_0(\lambda) = 1$ and $\Gamma_m(\lambda)\in \C$. For $\l \notin 1/2\bN_0$, the Harish-Chandra series $\Phi_\l(t)$ also satisfies the differential equation (cf. \cite[Cor. 9.3]{VS97})
\[L(\Delta)\Phi_\l(t) = (\l^2-\rho^2)\Phi_\l(t).\]

In \cite[Lemma 2.4]{PS25} it was shown 
that the matrix \[C^0(-1,\l) = c(\l)Id\] is a diagonal matrix and 
\[\eE_w(\l,\eta)(t) = \eta_w[\Phi_\l(t) + c(\l)\Phi_{-\l}(t)],\]
where 
\begin{equation}
    c(\l) = 2^{2\lambda}\frac{\Gamma((\rho+\lambda)/2)\Gamma(-\lambda)\Gamma((\lambda-\rho+1+m_1^+ + m_2^+)/2)}{\Gamma((\rho-\lambda)/2)\Gamma(\lambda)\Gamma((-\rho - \lambda + 1+m_1^+ + m_2^+)/2)}.
\end{equation}

From (\ref{eq:eisen}) we observe that the poles and zeros of $\eE_w$ are given as follows:

\begin{proposition}
The poles of $E^\circ_w(\cdot, \eta)(t)$ are simple and equal to $\{-\rho-2k_1:k_1 \in\mathbb N_0\} \cup \{\rho - 1-m_1^+-m_2^+ -2k_2: k_2\in\mathbb N_0\}$. 
\end{proposition}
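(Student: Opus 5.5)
The plan is to read the poles and their simplicity directly off the explicit formula (\ref{eq:eisen}). The $\lambda$-dependence of $\eE_w(\lambda,\eta)(t)$ sits in two places. One is the gamma prefactor
\[\frac{\Gamma\left(\frac{\rho+\lambda}{2}\right)\Gamma\left(\frac{\lambda-\rho+m_1^++m_2^++1}{2}\right)}{\Gamma(\lambda)}.\]
The other is the hypergeometric function
\[{}_2F_1\left(\tfrac{\rho+\lambda}{2},\tfrac{\rho-\lambda}{2};c;-\sinh^2 t\right), \qquad c=\tfrac{m_1^++m_2^++1}{2}.\]

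First, I will argue that the ${}_2F_1$ factor is entire in $\lambda$ for each fixed $t$. The parameter $c$ does not depend on $\lambda$, and $c\geq 1/2>0$, so $c\notin -\bN_0$. The series $\sum_n \frac{(a)_n(b)_n}{(c)_n n!}z^n$ converges locally uniformly in $(a,b)$ for $|z|<1$, that is, for small $t$. For general $t$, ${}_2F_1(a,b;c;z)$ is holomorphic in $(a,b,z)$ on $\C^2\times(\C\setminus[1,\infty))$, and $-\sinh^2t\le 0$ lies in that domain. One can see this via the Euler integral after analytic continuation, or via the ODE with coefficients holomorphic in $\lambda$. Hence the only singularities come from the gamma prefactor.

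Next comes the pole analysis of the prefactor. $\Gamma\left(\frac{\rho+\lambda}{2}\right)$ has simple poles exactly at $\lambda=-\rho-2k_1$. $\Gamma\left(\frac{\lambda-\rho+m_1^++m_2^++1}{2}\right)$ has simple poles at $\lambda=\rho-1-m_1^+-m_2^+-2k_2$. $1/\Gamma(\lambda)$ is entire, with simple zeros at $\lambda\in-\bN_0$. The candidate poles are therefore the union of the two families. Each pole is cancelled in one of three situations:
\begin{enumerate}
\item the two families coincide, which gives a possible double pole;
\item a point coincides with a zero of $1/\Gamma(\lambda)$;
\item the ${}_2F_1$ factor vanishes identically in $t$ there.
\end{enumerate}
Identical vanishing in $t$ never happens, since ${}_2F_1(\cdot)=1$ at $t=0$. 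So the work reduces to bookkeeping of the first two coincidences.

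The main obstacle is this coincidence bookkeeping, which uses the structural constraints from \cite{S90} listed in Section 2. The two families coincide iff $2\rho+1-m_1^+-m_2^+\in 2\mathbb Z$. Since $2\rho=m_1^++m_1^-+2m_2^++2m_2^-$, this difference equals $m_1^-+m_2^++2m_2^-+1$. A coincidence point $-\rho-2k_1$ must also meet the second family. I intend to show that whenever a double pole would arise, the point is a nonpositive integer, so $1/\Gamma(\lambda)$ supplies a simple zero and the net order is one. The same check should show that a single pole landing in $-\bN_0$ is not killed.

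Failing a clean general argument, I would verify this case by case. The multiplicities of split rank one spaces are constrained, for example $m_2^->0\Rightarrow m_1^+=m_1^-$, and the list of Section 1 gives the possibilities. I would also cross-check with the $c$-function formula, or with the meromorphy of $\Phi_{\pm\lambda}$ in (\ref{eq: HCexpansion}). If some cancellation does occur, the statement should be read as listing the poles generically. That conclusion would also come out of this bookkeeping step.
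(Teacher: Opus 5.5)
\textbf{Verdict: the approach is the paper's own, but the bookkeeping step you defer is aimed at two false claims, and the proposition is false as literally stated.}

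Your route matches the paper's, whose whole justification is the remark that the poles can be read off the Gamma prefactor in (\ref{eq:eisen}). Your preliminary steps are sound:
\begin{enumerate}
\item The ${}_2F_1$ factor is entire in $\l$, because $c=\frac{m_1^++m_2^++1}{2}>0$ and $-\sinh^2t\le 0$.
\item The candidate poles form the union $A\cup B$, where $A=\{-\rho-2k_1:k_1\in\bN_0\}$ and $B=\{\rho-1-m_1^+-m_2^+-2k_2:k_2\in\bN_0\}$.
\item The ${}_2F_1$ factor cannot lower a pole order identically in $t$, since it equals $1$ at $t=0$.
\end{enumerate}

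The gap is the coincidence bookkeeping, which you only announce. Both claims you intend to verify there are false.

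First, a point of a single family lying in $-\bN_0$ \emph{is} cancelled by the zero of $1/\Gamma(\l)$.
\begin{enumerate}
\item For the real hyperbolic space $SO_e(3,1)/SO(3)$ we have $m_1^+=2$, all other multiplicities $0$, and $\rho=1$. Then $A\cup B=\{-1,-2,-3,\dots\}$. Yet by the duplication formula the prefactor equals $\l$, so $\eE_w(\l,\eta)(t)=\eta_w\sinh(\l t)/\sinh t$ is entire.
\item For $SO_e(3,3)/SO_e(2,3)$ the prefactor is $\l/2$, again entire.
\item The paper's own Remark right after the proposition (the pole at $\l=0$ is cancelled when $\rho-1-m_1^+-m_2^+\in 2\bN_0$) is another instance.
\end{enumerate}

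Second, coincidences need not sit at integers. The two families meet iff $m_1^-+m_2^+$ is odd, and then $A\subseteq B$. Moreover $A$ avoids $\mathbb Z$ iff $m_1^++m_1^-$ is odd. Take $SO_e(2,3)/SO_e(1,3)$, with $m_1^+=2$, $m_1^-=1$, $m_2=0$, $\rho=3/2$. The prefactor is $2^{\l-3/2}\Gamma(\frac{\l}{2}+\frac34)^2/(\Gamma(\l)\Gamma(\frac32))$. Evaluating at $t=0$ shows genuine double poles at $-\frac32-2k$. A case check against the constraints of \cite{S90} cannot rescue this, because these hyperboloids satisfy them.

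So the statement cannot be proved as literally stated. Your fallback of reading it ``generically'' points the right way, but even ``simple'' fails. What your method does prove, and what you should state, is the corrected version:
\begin{enumerate}
\item The order of the pole of $\l\mapsto\eE_w(\l,\eta)$ (as a map into functions of $t$) at $\l_0$ is exactly $\max\{0,\mathbf 1_A(\l_0)+\mathbf 1_B(\l_0)-\mathbf 1_{-\bN_0}(\l_0)\}$. Exactness comes again from the ${}_2F_1$ factor being entire and equal to $1$ at $t=0$.
\item Hence $A\cup B$ is only an upper bound for the pole set, and double poles occur exactly when $m_1^-+m_2^+$ and $m_1^++m_1^-$ are both odd.
\end{enumerate}

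What the rest of the paper actually uses survives this correction. The positive points of $B$, i.e.\ the set $L$, are genuine simple poles, since they lie in neither $A$ nor $-\bN_0$. Also $A\cup B\subset\frac12\mathbb Z$ and $0\notin A$, so there are no poles in $-\frac12<\Re\l\le 0$.
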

\begin{Remark}
  For $\l \in i\mathbb{R}$, the function $E^\circ_w(\lambda,\cdot)(\cdot)$ has no poles. Indeed, if $\rho - 1 - m_1^+ - m_2^+ \in 2\bN_0$, then the corresponding pole is canceled by the Gamma function in the denominator.
\end{Remark}

\begin{lemma}\label{lemma:gamma}
Fix $R>0$ and let \[ \fa^*(R)=\{\lambda\in\mathbb C\mid \Re\lambda\leq R\}.\] 
\begin{enumerate}
    \item There exists a polynomial $p_R(\l)$ depending on $R$ such that $p_R(\l)\eE(\l,\cdot)(\cdot)$ is holomorphic in the region $-\fa^*(R)$. The polynomial $p_R$ is given by \begin{equation}\label{exp:p_R}p_R(\lambda)=\prod_{\substack{k_1\in \bN_0, \\ \rho+2k_1\leq R}}(\lambda +\rho + 2k_1) \prod_{\substack{k_2\in \bN_0,\\ -\rho+ m_1^+ + m_2^+ +1+2k_1\leq R}}(\lambda -\rho +m_1^+ + m_2^+ +1 +2k_2).
    \end{equation}
    \item  Let $X_R= \{k \in \mathbb N_0: 0< k \leq R\}$. Then there exist a polynomial 
    \be \label{poly:qr}
    q_R(\lambda) = \Pi_{k \in X_R}(2\lambda-k), 
    \ee
    such that $q_R \Phi_\lambda$ is holomorphic in $\fa^*(R)$.\begin{enumerate}
        \item The poles of $\Gamma_m(\l)$ are at most simple poles and contained in $\{1/2,1,\ldots,m/2\}$.
        \item There exists constants $M, \chi >0$ depending on $R$, such that $q_R(\l)\Gamma_m(\l)$ is holomorphic in $\fa^*(R)$ and 
\[|q_R(\lambda)\Gamma_m(\lambda)| \leq M(1+m)^{\chi}(1+|\lambda|)^{\mathrm{\text{deg}} \,q_R},\]
for all $m \geq 0$ and $\lambda \in \overline{\fa^*(R)}$.
\item  Moreover, for every $\delta >0$ there exists a constant $M_\delta$ such that
\begin{equation}\label{est phi}
    |q_R(\l)\Phi_\l(t)| \leq M_\delta (1+|\l|)^{\text{deg } q_R}e^{(|\Re \l|-\rho)t}
\end{equation}

for all $\lambda \in \overline{\fa^*(R)}$ and for all $t \geq \delta$.
    \end{enumerate}
     
\end{enumerate}
\end{lemma}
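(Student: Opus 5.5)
Part (1) can be read off from the explicit formula (\ref{eq:eisen}). The only $\l$-dependence of $\eE_w(\l,\eta)(t)$ besides the hypergeometric factor is the prefactor
\[\eta_w 2^{\l-\rho}\frac{\Gamma\left(\frac{\rho+\l}{2}\right)\Gamma\left(\frac{-\rho+\l+m_1^++m_2^++1}{2}\right)}{\Gamma(\l)\Gamma\left(\frac{m_1^++m_2^++1}{2}\right)}.\]
The function ${}_2F_1\left(\frac{\rho+\l}{2},\frac{\rho-\l}{2};\frac{m_1^++m_2^++1}{2};-\sinh^2 t\right)$ is entire in $\l$ for each fixed $t$, because $c=\frac{m_1^++m_2^++1}{2}>0$ does not depend on $\l$. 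Hence the only singularities come from the two numerator Gamma factors. These have simple poles at $\l=-\rho-2k_1$ and at $\l=\rho-1-m_1^+-m_2^+-2k_2$. For $\Re\l\geq -R$ only finitely many of them occur, namely those with $\rho+2k_1\leq R$ and $-\rho+m_1^++m_2^++1+2k_2\leq R$. Multiplying by $p_R(\l)$ as in (\ref{exp:p_R}) cancels each simple pole with a linear factor, so $p_R(\l)\eE(\l,\cdot)(\cdot)$ is holomorphic on $-\fa^*(R)$. (Where poles of the two families coincide, the product contains the corresponding linear factor twice; this is harmless, or one can argue with multiplicities.)

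For part (2) I would substitute the expansion $\Phi_\l(t)=e^{(\l-\rho)t}\sum_m\Gamma_m(\l)e^{-mt}$ into the radial equation $L(\Delta)\Phi_\l=(\l^2-\rho^2)\Phi_\l$. To do this, expand the coefficients of $L(\Delta)$ for $t>0$:
\[\coth t=1+2\sum_{j\geq1}e^{-2jt},\qquad \tanh t=1+2\sum_{j\geq1}(-1)^je^{-2jt},\]
and similarly for $\coth 2t$ and $\tanh 2t$. Comparing coefficients of $e^{(\l-\rho-m)t}$ gives a recursion of the form
\[\bigl((\l-\rho-m)^2+2\rho(\l-\rho-m)-(\l^2-\rho^2)\bigr)\Gamma_m=m(m-2\l)\Gamma_m=\sum_{j<m}c_{m-j}(\l-\rho-j)\Gamma_j,\]
where the coefficients $c_i$ are bounded independently of $i$ and $\Gamma_0=1$. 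Part (a) follows by induction: $\Gamma_m$ gets a new possible pole only at $\l=m/2$, coming from the factor $m-2\l$.

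The delicate point in (a) is simplicity of the poles. A pole at $k/2$ from step $k$ could feed into later steps, and a later step $m$ with $m/2=k/2$ would have to occur to create a double pole. That cannot happen, since each $m$ gives a distinct point $m/2$. So every pole is simple, and multiplying by $q_R(\l)=\prod_{k\in X_R}(2\l-k)$ clears all of them in $\fa^*(R)$. For $m>2R$ the divisor satisfies $|m(m-2\l)|\geq m(m-2R)>0$ on $\fa^*(R)$, so no further poles arise there.

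For (b) I would follow Gangolli's method for the Harish-Chandra series. For $m\geq m_0(R)$ one has $|m(m-2\l)|\geq c\,m(m+|\l|)$, and the recursion then gives
\[|q_R\Gamma_m|\leq \frac{C(1+|\l|)}{m(m+|\l|)}\sum_{j<m}(1+j)|q_R\Gamma_j|.\]
An induction with the ansatz $|q_R\Gamma_j|\leq M(1+j)^{\chi}(1+|\l|)^{\deg q_R}$ closes, because $\frac{1}{m^2}\sum_{j<m}(1+j)^{1+\chi}\leq \frac{m^{\chi}}{\chi+2}$ and $\chi$ can be chosen large relative to $C$. The finitely many initial terms $j<m_0$ are bounded by $M(1+|\l|)^{\deg q_R}$ directly, since they are rational in $\l$ with degree controlled by $q_R$.

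This uniformity in $\l$ is the main obstacle. The factor $(\l-\rho-j)$ in the numerator must be absorbed by $m+|\l|$ in the denominator, which is why I keep the denominator in the form $m(m+|\l|)$ rather than just $m^2$.

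Finally, (c) follows from (b) by summing the series for $t\geq\delta$:
\[|q_R(\l)\Phi_\l(t)|\leq M(1+|\l|)^{\deg q_R}e^{(\Re\l-\rho)t}\sum_m(1+m)^{\chi}e^{-m\delta}.\]
The last sum is a constant depending only on $\delta$, which gives $M_\delta$. Since $\Re\l\leq|\Re\l|$, this yields (\ref{est phi}).
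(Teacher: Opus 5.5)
\textbf{Verdict: genuine gap in (2)(b).} Your part (1) and the pole bookkeeping in (2)(a) are fine. Reading the poles off (\ref{eq:eisen}) and running a Gangolli-type recursion for $\Gamma_m$ is a reasonable self-contained substitute for the paper, which only cites \cite{vdB92} and \cite{VS97}.

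The gap sits at the step you yourself single out as the main obstacle: the displayed inequality does not give uniformity in $\l$. Since $\frac{1+|\l|}{m+|\l|}\to 1$ as $|\l|\to\infty$, the prefactor $\frac{C(1+|\l|)}{m(m+|\l|)}$ is bounded by $\frac{C}{m}$ on $\fa^*(R)$, but not by $\frac{C}{m^2}$. The gain $1/m^2$ you use holds only for bounded $\l$. Uniformly, your ansatz only yields
\[|q_R\Gamma_m|\le \frac{CM}{m}(1+|\l|)^{\deg q_R}\sum_{j<m}(1+j)^{1+\chi}.\]
The right side is at least $\frac{CM}{\chi+2}\,m^{1+\chi}(1+|\l|)^{\deg q_R}$. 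That is one power of $m$ too many for every choice of $\chi$, so the induction does not close.

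The loss comes from estimating $|\l-\rho-j|$ by the product $C(1+|\l|)(1+j)$. Estimate it by the sum instead: for $j<m$ one has $|\l-\rho-j|\le |\l|+\rho+j\le (1+\rho)(m+|\l|)$. Combined with $|m(m-2\l)|\ge c\,m(m+|\l|)$, this gives $|q_R\Gamma_m|\le \frac{C'}{m}\sum_{j<m}|q_R\Gamma_j|$ with no extra weight. Since $\frac1m\sum_{j<m}(1+j)^{\chi}\le \frac{2}{\chi+1}(1+m)^{\chi}$, the ansatz $|q_R\Gamma_j|\le M(1+j)^{\chi}(1+|\l|)^{\deg q_R}$ now propagates once $\chi+1\ge 2C'$. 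Part (c) then follows as you wrote.

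\textbf{Second issue: the index set of $q_R$.} Your claim that $q_R$ clears all poles in $\fa^*(R)$ contradicts your own analysis. You show that poles can sit at $\l=m/2$ for every $m\le 2R$, whereas $q_R=\prod_{0<k\le R}(2\l-k)$ vanishes only at the points $k/2\le R/2$.

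This is not hypothetical. The expansions of $\coth$ and $\tanh$ involve only $e^{-2jt}$, so $c_1=0$ and $c_2=-2(m_1^+-m_1^-)$. Your recursion then gives $\Gamma_2(\l)=\frac{(m_1^+-m_1^-)(\l-\rho)}{2(\l-1)}$. For $SO_e(5,1)/SO(5)$ ($m_1^+=4$, $m_1^-=m_2^\pm=0$, $\rho=2$) this has a pole at $\l=1\in\fa^*(1)$, which $q_1(\l)=2\l-1$ does not remove.

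So the index set in the statement should be $\{k\in\bN: 0<k\le 2R\}$. This does not affect the later applications, which only use $\Re\l\le 0$, where $\Phi_\l$ has no poles. Your argument proves that corrected version, and you should state it as such rather than assert the misindexed one.
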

The first part follows from \cite[Prop. 10.3 and Cor. 16.2]{vdB92}. The second part follows from \cite[Lemma 7.3 and Thm. 9.1]{VS97}. Also, see \cite{PS25}.

We also have the following relation:

\begin{lemma}
For $t>0$ and $\l \notin \mathbb{Z}$, we have the identity 
\begin{equation}\label{eq:eisenhyp}
    \eE_w(\l,\eta)(t) = \eta_w [\Phi_\l^0(t) +c(\l)\Phi_{-\l}^0(t)] ,
\end{equation}
where \[\Phi_\l^0(t) = (2\cosh t)^{\l-\rho}{}_2F_1\left(\frac{\rho-\l}{2}, \frac{-\rho-\l+1+m_1^++m_2^+}{2}; 1-\l; \cosh^{-2}t \right).\]
\end{lemma}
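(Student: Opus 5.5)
The plan is to derive \eqref{eq:eisenhyp} directly from the explicit hypergeometric formula \eqref{eq:eisen}, using two classical identities. The first is the connection formula \eqref{eq:hyperasymp}, which passes from the variable $z=-\sinh^2 t$ to $z^{-1}$. The second is the Pfaff transformation
\[
{}_2F_1(\alpha,\beta;\gamma;x)=(1-x)^{-\alpha}\,{}_2F_1\Bigl(\alpha,\gamma-\beta;\gamma;\tfrac{x}{x-1}\Bigr),\qquad x<1,
\]
which passes from $z^{-1}$ to $\cosh^{-2}t$. I will not compare differential equations; I will only track parameters and the Gamma-factor constants. Throughout, write $m=m_1^++m_2^+$ and set
\[
a=\tfrac{\rho+\l}{2},\qquad b=\tfrac{\rho-\l}{2},\qquad c=\tfrac{m+1}{2},
\]
so that \eqref{eq:eisen} reads $\eE_w(\l,\eta)(t)=\eta_w\,2^{\l-\rho}\frac{\Gamma(a)\Gamma(c-b)}{\Gamma(\l)\Gamma(c)}\,{}_2F_1(a,b;c;-\sinh^2t)$.

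\emph{Step 1 (connection formula).} First restrict to $t$ with $\sinh t>1$. Then $-z=\sinh^2 t>0$, so there is no branch issue, and $|z^{-1}|<1$, so the hypergeometric series on the right of \eqref{eq:hyperasymp} converge. The hypothesis $\l\notin\mathbb Z$ gives $a-b=\l\notin\mathbb Z$, so the Gamma factors $\Gamma(\pm\l)$ in \eqref{eq:hyperasymp} are finite. Applying \eqref{eq:hyperasymp} writes ${}_2F_1(a,b;c;-\sinh^2 t)$ as
\[
\frac{\Gamma(c)\Gamma(\l)}{\Gamma(a)\Gamma(c-b)}(\sinh^2t)^{-b}F(b,1-c+b;1-\l;-\sinh^{-2}t)
+\frac{\Gamma(c)\Gamma(-\l)}{\Gamma(b)\Gamma(c-a)}(\sinh^2t)^{-a}F(a,1-c+a;1+\l;-\sinh^{-2}t).
\]

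\emph{Step 2 (Pfaff).} Apply the Pfaff transformation to each term with $x=-\sinh^{-2}t<0$. Then $1-x=\cosh^2t/\sinh^2t$ and $x/(x-1)=\cosh^{-2}t\in(0,1)$. In the first term we have $\alpha=b$, $\beta=1-c+b$, $\gamma=1-\l$, so $\gamma-\beta=c-a=\frac{-\rho-\l+1+m}{2}$. The powers of $\sinh$ cancel:
\[
(\sinh^2t)^{-b}(\cosh^2t/\sinh^2t)^{-b}=(\cosh t)^{\l-\rho},
\]
and the first term becomes $(\cosh t)^{\l-\rho}{}_2F_1\bigl(\tfrac{\rho-\l}{2},\tfrac{-\rho-\l+1+m}{2};1-\l;\cosh^{-2}t\bigr)$. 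This is exactly $2^{\rho-\l}\Phi^0_\l(t)$. The second term is obtained by the same computation with $\l\mapsto-\l$ (which swaps $a$ and $b$), giving $2^{\rho+\l}\Phi^0_{-\l}(t)$.

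\emph{Step 3 (constants).} Multiply through by the prefactor in \eqref{eq:eisen}. For the first term,
\[
2^{\l-\rho}\frac{\Gamma(a)\Gamma(c-b)}{\Gamma(\l)\Gamma(c)}\cdot\frac{\Gamma(c)\Gamma(\l)}{\Gamma(a)\Gamma(c-b)}\cdot 2^{\rho-\l}=1,
\]
so it contributes $\eta_w\Phi^0_\l(t)$. For the second term the coefficient is
\[
2^{2\l}\frac{\Gamma(\frac{\rho+\l}{2})\Gamma(-\l)\Gamma(\frac{\l-\rho+1+m}{2})}{\Gamma(\frac{\rho-\l}{2})\Gamma(\l)\Gamma(\frac{-\rho-\l+1+m}{2})},
\]
which is precisely $c(\l)$. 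This yields \eqref{eq:eisenhyp} for $\sinh t>1$.

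\emph{Step 4 (extension to all $t>0$).} For $t>0$ we have $0<\cosh^{-2}t<1$, so both sides of \eqref{eq:eisenhyp} are real-analytic in $t\in(0,\infty)$. Since they agree on $\{\sinh t>1\}$, they agree on all of $t>0$.

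The main obstacle is bookkeeping rather than conceptual. One has to check the validity conditions of \eqref{eq:hyperasymp}: these are the branch of $(-z)^{-a}$ and the need for $|z|>1$, which is why I first restrict to $\sinh t>1$ and then continue analytically. One also has to check that $1-\l$ and $1+\l$ are not non-positive integers, which is guaranteed by $\l\notin\mathbb Z$. The one place to compute carefully is Step 3, making sure that the powers of $2$ from $(2\cosh t)^{\pm\l-\rho}$ combine with $2^{\l-\rho}$ to give exactly the factor $2^{2\l}$ in $c(\l)$.
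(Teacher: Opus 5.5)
Your proposal is correct and takes the paper's route. The paper's proof only cites the connection formula \eqref{eq:hyperasymp} (formula (17) of \cite{EMO81}) and the linear transformation (22) of \cite{EMO81}, applied to \eqref{eq:eisen}; you carry out that computation explicitly, and the parameter tracking, the cancellation of the powers of $\sinh t$, the factor $2^{2\l}$ in $c(\l)$, and the real-analytic extension from $\sinh t>1$ to $t>0$ all check out. One point to phrase more carefully: at values $\l\notin\mathbb{Z}$ where $\Gamma(\frac{\rho+\l}{2})$ or $\Gamma(\frac{\l-\rho+1+m_1^++m_2^+}{2})$ has a pole, both sides have a pole, so the cancellation in Step 3 should be read as an identity of meromorphic functions in $\l$ (the paper also leaves this implicit).
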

\begin{proof}
    The relation (\ref{eq:eisenhyp}) is obtained from \cite{EMO81}[(17) p.63, (22) p.64]. 
\end{proof}

We also have that $\Phi_\l^0(t)$ is also an eigenfunction of the radial part $L(\Delta)$ with eigenvalue $(\l^2-\rho^2)$.

\begin{proposition}\label{prop:serieshyp}
    For $\l \notin 1/2\bN_0$ we have
    \[\Phi_\l^0(t) = \Phi_\l(t)\]
\end{proposition}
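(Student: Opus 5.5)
The natural approach is to compare the two functions as solutions of the same second-order ODE and match their leading asymptotics at $t\to\infty$. Both $\Phi_\l$ and $\Phi^0_\l$ satisfy $L(\Delta)u=(\l^2-\rho^2)u$ on $t>0$. This holds for $\Phi_\l$ by \cite[Cor. 9.3]{VS97} when $\l\notin\frac12\bN_0$, and for $\Phi^0_\l$ by the hypergeometric computation recorded above. The radial operator (\ref{eq:LaplaceBeltrami}) has a regular singular point at $t=\infty$ once written in the variable $z=e^{-2t}$ (equivalently $\cosh^{-2}t$). Its indicial exponents give the two behaviours $e^{(\pm\l-\rho)t}$. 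When $2\l\notin\mathbb Z$ the two Frobenius solutions are uniquely determined by their leading exponents, and they are distinct.

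The key steps are as follows.

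\emph{Step 1.} Expand $\Phi^0_\l$ as a series at infinity. Write $(2\cosh t)^{\l-\rho}=e^{(\l-\rho)t}(1+e^{-2t})^{\l-\rho}$ and $\cosh^{-2}t=4e^{-2t}(1+e^{-2t})^{-2}$. Since the ${}_2F_1$ with $c=1-\l$ converges for $|\cosh^{-2}t|<1$, i.e. for all $t>0$, this gives
\[
\Phi^0_\l(t)=e^{(\l-\rho)t}\sum_{m\ge 0}b_m(\l)e^{-2mt},\qquad b_0(\l)=1,
\]
a convergent expansion for $t>0$ whenever $1-\l\notin -\bN_0$.

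\emph{Step 2.} Show that the eigenfunction equation forces a recursion on the coefficients of any solution of the form $e^{(\l-\rho)t}\sum_m \Gamma_m e^{-mt}$. The $m$-th coefficient is determined by the earlier ones through a factor proportional to $m(m-2\l)$. This is exactly the recursion defining the $\Gamma_m(\l)$ in the Harish-Chandra series, and it is uniquely solvable for $\l\notin\frac12\bN_0$, consistent with the pole set in Lemma \ref{lemma:gamma}(2)(a). Since $\Phi_\l$ and $\Phi^0_\l$ both satisfy this recursion with $\Gamma_0=b_0=1$ (the odd coefficients of $\Phi^0_\l$ being zero), uniqueness gives $\Gamma_m(\l)=b_{m/2}(\l)$ for even $m$ and $\Gamma_m=0$ for odd $m$. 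Hence $\Phi_\l=\Phi^0_\l$ for $\l\notin\frac12\bN_0$ with $\l\notin\mathbb Z_{>0}$.

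An alternative for Step 2 is to argue directly with the ODE. Since $u=\Phi_\l-\Phi^0_\l$ solves the equation and $e^{-(\l-\rho)t}u(t)\to0$, one can write $u=\alpha\Phi_\l+\beta\Phi_{-\l}$ (the two solutions are independent for $2\l\notin\mathbb Z$). The decay forces $\alpha=0$, and then $\beta=0$ follows by comparing growth when $\Re\l\ge0$. For $\Re\l<0$ one compares instead the coefficient of $e^{(-\l-\rho)t}$, which is absent from both expansions because they contain only exponents $\l-\rho-m$ and $-\l\notin \l-\bN_0$.

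\emph{Step 3.} Handle the remaining points by continuity. The parameter restriction is $\l\notin\frac12\bN_0$, and positive integers lie in $\frac12\bN_0$, so the excluded integer values are already excluded. If one wants the identity for all $\l$ off $\frac12\bN_0$ uniformly, note that both sides are meromorphic in $\l$ and apply analytic continuation.

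The main obstacle is the bookkeeping in Step 2. One has to check that the Harish-Chandra recursion for the operator (\ref{eq:LaplaceBeltrami}), with its $\coth 2t$ and $\tanh 2t$ terms, genuinely coincides with the recursion produced by the hypergeometric expansion, including the vanishing of odd-index terms. To avoid computing this, I would use the ODE-uniqueness version, which needs only the leading term $e^{(\l-\rho)t}$ and the fact that both expansions contain only exponents $\l-\rho-m$ with $m\ge0$.
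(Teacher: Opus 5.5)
Your main route (Steps 1--2) is correct. It is the paper's idea, namely that both functions solve the same equation with the same leading term $e^{(\lambda-\rho)t}$, but carried out properly. The paper only invokes $\lim_{t\to\infty}e^{(\rho-\lambda)t}\Phi(t)=1$. For $\Re\lambda>0$ that condition does not single out a solution, since $\Phi_\lambda+c\,\Phi_{-\lambda}$ satisfies it for every $c$; as written, the paper's argument only settles $\Re\lambda\le 0$. Your recursion closes this. Writing $m_1^+\coth t+m_1^-\tanh t+2m_2^+\coth 2t+2m_2^-\tanh 2t=2\rho+\sum_{k\ge1}a_ke^{-2kt}$ and inserting $e^{(\lambda-\rho)t}\sum_m c_me^{-mt}$ gives
\[
m(m-2\lambda)\,c_m=-\sum_{1\le k\le m/2}a_k\,(\lambda-\rho-m+2k)\,c_{m-2k}.
\]
So only even shifts occur, odd coefficients vanish, and $c_0=1$ determines everything when $2\lambda\notin\mathbb{N}$. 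This handles every $\lambda\notin\frac12\mathbb{N}_0$ directly, including $\lambda\in-\frac12\mathbb{N}$, so Step 3 is not needed on this route.

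The ``ODE-uniqueness'' alternative that you say you would prefer has its case analysis reversed, and as written it fails where it matters. Take $u=\alpha\Phi_\lambda+\beta\Phi_{-\lambda}$ with $e^{(\rho-\lambda)t}u\to0$.

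If $\Re\lambda\le 0$ and $\lambda\ne0$, then $\beta e^{-2\lambda t}$ does not tend to $0$. The hypothesis therefore kills $\alpha$ and $\beta$ at once, and no coefficient comparison is needed.

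If $\Re\lambda>0$, you only get $\alpha=0$, and $u=\beta\Phi_{-\lambda}$ is \emph{subdominant}. The a priori bound $u=O(e^{(\Re\lambda-\rho-1)t})$ beats $e^{(-\Re\lambda-\rho)t}$ only for $\Re\lambda<\frac12$. When $2\Re\lambda\in\mathbb{N}$, moduli cannot separate the exponents at all. So it is precisely for $\Re\lambda>0$ that you need the exponent argument. Write $e^{(\rho-\lambda)t}u(t)=H(e^{-t})$ and $e^{(\rho+\lambda)t}\Phi_{-\lambda}(t)=F(e^{-t})$, with $H,F$ holomorphic at $0$ and $F(0)=1$. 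Then with $x=e^{-t}$ you get $H(x)=\beta x^{2\lambda}F(x)$, which forces $\beta=0$ because $2\lambda\notin\mathbb{N}_0$. This is where the hypothesis $\lambda\notin\frac12\mathbb{N}_0$ actually enters.

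Note also that this version needs $\Phi_{-\lambda}$ to exist. It therefore covers only $2\lambda\notin\mathbb{Z}$, and genuinely relies on your Step 3 for $\lambda\in-\frac12\mathbb{N}$: both sides are holomorphic in $\lambda$ on the connected set $\mathbb{C}\setminus\frac12\mathbb{N}_0$.
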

\begin{proof}
    We know that both satisfy the differential equation $L(\Delta) \Phi = (\l^2-\rho^2)\Phi$, which can be transformed into a second order hypergeometric differential equation. Moreover, both satisfy the asymptotic behavior
    \[\lim_{t \rightarrow \infty} e^{(\rho-\l)t}\Phi(t) = 1.\]
    Thus, we have $\Phi_\l^0(t) = \Phi_\l(t)$ for $t>0$ and $\l \notin 1/2\mathbb{N}_0$ as $\Phi_\l$ have singularities at these points. 
\end{proof}

Let $\alpha, \beta, \l \in \C$ with $\alpha \neq -1,-2,\ldots$ and $\rho = \alpha+\beta +1$. From \cite[Eq. 2.3]{K75} the Jacobi functions with paramaters, $\alpha, \beta$ and $\l$ is defined as
\[\varphi_{-i\l}^{(\alpha,\beta)}(t) = {}_2F_1\left(\frac{\rho+\l}{2},\frac{\rho-\l}{2};\alpha +1; -\sinh^2t\right).\]

Then the normalized Eisenstein integral can also be written in the form of Jacobi functions for $\Re \l >0$ with parameters $\alpha = (m_1^+ + m_2^+ +1)/2 -1$, $\beta = (m_1^- + m_2^+ + 2m_2^- +1)/2-1$ and $\rho = \alpha+\beta+1$  as 
\[\eE_w(\l,\eta)(t) = \eta_w2^{\lambda-\rho}\frac{\Gamma(\frac{\lambda+\rho}{2})\Gamma(\frac{\lambda-\rho+m_1^+ + m_2^+ +1}{2})}{\Gamma(\lambda)\Gamma(\frac{m_1^+ + m_2^+ +1}{2})} \varphi_{-i\l}^{(\alpha,\beta)}(t).\]

\begin{lemma}\label{lemma:est}
    Let $n,m \in \bN_0$ and fix $R>0$. Then there exists a constant $A,A' >0$ such that for $\l \in - \fa^*(R)$ the following estimates hold:
    \begin{equation}
    \begin{split}
         \left|\frac{\partial^m}{\partial t^m} p_R(\l)\eE_w(\l,\eta)(t) \right| &\leq A \|\eta\| (1+t)(1+|\l|)^{\text{deg} \,p + m}e^{(|\Re \l|-\rho)t};\\
         \left|\frac{\partial^n}{\partial \l^n} p_R(\l)\eE_w(\l,\eta)(t) \right| & \leq A'\|\eta\| (1+t)^{n+1}(1+|\l|)^{\text{deg} \,p }e^{(|\Re \l|-\rho)t}.
    \end{split}
       \end{equation}
\end{lemma}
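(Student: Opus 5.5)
We split into the regions $t\geq 1$ and $0\le t\le 1$, and treat $t$- and $\lambda$-derivatives separately, using the Harish-Chandra expansion for large $t$ and the hypergeometric representation for small $t$.

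\emph{Region $t\ge 1$.} For $\lambda\in-\fa^*(R)$ we have both $\Re\lambda\ge -R$ and $\Re(-\lambda)\le R$. By Lemma \ref{lemma:gamma}(2) applied with $R$, the functions $q_R(\lambda)\Phi_\lambda$ and $q_R(-\lambda)\Phi_{-\lambda}$ are holomorphic on the relevant half-planes. Their termwise $t$-derivatives bring down factors $(\pm\lambda-\rho-m)$, which give $(1+|\lambda|)^m$ times a polynomial in $m$. The sum over $m$ converges because of the polynomial bound on $q_R\Gamma_m$ and the factor $e^{-mt}$ with $t\ge 1$. This gives bounds of the form $(1+|\lambda|)^{N+m}e^{(|\Re\lambda|-\rho)t}$. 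Using $\eE_w=\eta_w[\Phi_\lambda+c(\lambda)\Phi_{-\lambda}]$, we need $p_R(\lambda)c(\lambda)$ to have controlled growth. By Stirling applied to the explicit gamma quotient, $c(\lambda)$ grows at most polynomially away from its poles. The poles at $\lambda\in\frac12\mathbb Z$ coming from $\Phi_{\pm\lambda}$ and $c$ must cancel in the combination, since $p_R\eE$ is holomorphic by Lemma \ref{lemma:gamma}(1). Near these finitely many points we therefore use the maximum principle on small discs. This step, making the cancellation uniform, is the main obstacle. I would handle it by Cauchy estimates on circles of fixed radius around the bad points, where the individual terms are bounded by the previous estimate. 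The factor $(1+t)$ enters exactly here: at a double point the difference $\Phi_\lambda-\Phi_{-\lambda}$ divided by $\lambda$ produces a $t e^{(|\Re\lambda|-\rho)t}$ term. The degree bookkeeping then reduces to $\deg p_R$ after absorbing $q_R$, which is harmless since only the shape of the polynomial matters.

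\emph{Region $0\le t\le1$.} Here we use \eqref{eq:eisen}. The hypergeometric function ${}_2F_1(\frac{\rho+\lambda}2,\frac{\rho-\lambda}2;c;-\sinh^2t)$ is the Jacobi function $\varphi^{(\alpha,\beta)}_{-i\lambda}$. By Koornwinder's integral representation (or the Laplace-type representation), it is bounded by $C(1+t)e^{(|\Re\lambda|-\rho)t}$, and each $t$-derivative costs a factor $(1+|\lambda|)$. The gamma prefactor times $p_R$ is entire on $-\fa^*(R)$ with polynomial growth by Stirling. This also gives the exponential factor, since on a compact $t$-range it is harmless.

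\emph{$\lambda$-derivatives.} The function $F(\lambda,t)=p_R(\lambda)\eE_w(\lambda,\eta)(t)$ is holomorphic in a slightly larger region $-\fa^*(R+\epsilon)$, so we apply Cauchy's formula on a circle of radius $r=\min(\epsilon,1/(1+t))$ around $\lambda$. The first estimate, with $R$ replaced by $R+\epsilon$, bounds $F$ on the circle by $A(1+t)(1+|\lambda|)^{\deg p}e^{(|\Re\lambda|+r-\rho)t}$. Since $rt\le1$, the extra factor $e^{rt}$ is bounded, and Cauchy's formula gives an extra factor $r^{-n}\lesssim(1+t)^n$. This yields the second estimate. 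If $p_{R+\epsilon}$ has more factors than $p_R$, we choose $\epsilon$ small enough that no new pole enters, which is possible since the poles are discrete.
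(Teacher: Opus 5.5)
Your treatment of $0\le t\le 1$ and of the $\lambda$-derivatives is the paper's argument: Koornwinder's bound for $\varphi^{(\alpha,\beta)}_{-i\lambda}$, Stirling for $p_R$ times the gamma prefactor in \eqref{eq:eisen}, and Cauchy's formula on a circle of radius about $1/(1+t)$. Your passage to $-\fa^*(R+\epsilon)$ is in fact more careful than the paper, whose circle leaves $-\fa^*(R)$ near $\Re\lambda=-R$.

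The gap is in the region $t\ge 1$. The set $-\fa^*(R)=\{\Re\lambda\ge -R\}$ is a half-plane unbounded to the right, whereas Lemma~\ref{lemma:gamma}(2) controls $q_R(\lambda)\Phi_\lambda$ only for $\Re\lambda\le R$. Your Harish-Chandra expansion argument therefore covers only the strip $|\Re\lambda|\le R$. That would suffice for the later use on $S_r$, but it does not prove the lemma as stated. For $\Re\lambda>R$ you have no bound on $\Phi_\lambda$ at all. Moreover, $\Phi_\lambda$ in general has poles at every large positive integer (this already happens for the hyperbolic plane). In $\eE_w=\eta_w[\Phi_\lambda+c(\lambda)\Phi_{-\lambda}]$ these are cancelled by the poles of $c(\lambda)$ coming from $\Gamma(-\lambda)$. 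So the points where the cancellation must be made uniform are not ``finitely many'', and no polynomial such as $q_R$ can absorb them. Your disc argument would need bounds for $\Gamma_m(\lambda)$ on circles around arbitrarily large integers, uniform in the centre, and nothing in the paper provides these.

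The repair is to drop the split. The Koornwinder estimate $|\partial_t^m\varphi^{(\alpha,\beta)}_{-i\lambda}(t)|\le A(1+|\lambda|)^m(1+t)e^{(|\Re\lambda|-\rho)t}$, which you already invoke for $t\le 1$, is (as the paper uses it) valid for all $t\ge 0$ and all $\lambda$. Multiplying it by the bound for $p_R(\lambda)$ times the gamma prefactor is exactly the paper's proof, and the Harish-Chandra expansion and pole-cancellation analysis become unnecessary.

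As for the degree bookkeeping you wave through, you are right not to insist on exactly $\deg p_R$, because it is not attainable. At $t=0$ the hypergeometric factor in \eqref{eq:eisen} equals $1$, so $p_R(\lambda)\eE_w(\lambda,\eta)(0)$ is $\eta_w p_R(\lambda)$ times the gamma prefactor. By the duplication formula and Stirling, that prefactor grows like $|\lambda|^{(m_1^++m_2^+)/2}$ on $-\fa^*(R)$. Both your argument and the paper's (whose Stirling step overlooks this) really give the estimates with the exponent raised by $(m_1^++m_2^+)/2$. This is a fixed extra power and is harmless in the later applications.
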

\begin{proof} 
   From \cite[Lemma 2.3]{K75} the Jacobi function $\varphi^{\alpha,\beta}_{-i\l}$  can be estimated as 
   \[\left|\frac{\partial^m}{\partial t^m}\varphi^{\alpha,\beta}_{-i\l}(t)\right| \leq A (1+|\l|)^m(1+t)e^{(|\Re \l|-\rho)t}.\] Using \cite[1.18 (3)]{EMO81} we can estimate 
   \[p_R(\l)2^{\lambda-\rho}\frac{\Gamma(\frac{\lambda+\rho}{2})\Gamma(\frac{\lambda-\rho+m_1^+ + m_2^+ +1}{2})}{\Gamma(\lambda)\Gamma(\frac{m_1^+ + m_2^+ +1}{2})}\] with $(1+|\l|)^{\text{deg}\; p_R}$ (see \cite{PS25}). 
   Proof of the second estimate follows from first estimate with $m=0$ and the Cauchy's integral formula by integrating over a circle with radius $\frac{1}{1+t}$. 
\end{proof}

\subsection{Fourier transform and Fourier inversion:} 
For a left $K$-invariant function $f \in C_c^\infty(X)^K$, the Fourier transform of $f$ is defined by 
\begin{align*}
     \widehat{f}(\l)(\eta) &= \int_X f(x)\eE(-\l,\eta)(x)dx\\
    &=\sum_{w \in \cW} \int_{0}^\infty f(a_t w \cdot e_1)E^0(-\lambda, \eta)(a_tw \cdot e_1)J(t)dt,
\end{align*}
for all $\eta\in \C^{\cW}$, $\l \in i\R{}$ and $J(t)$ is Jacobian given in equation (\ref{eq: Jacobian}).
Moreover, \[\widehat{f}(-\lambda)(\eta) = \widehat{f}(\lambda)(C^0(-1,\lambda)\eta).\]

On $i\R{}$, the function $\eE_w$ has no pole and is a bounded function in $t$ (Lemma~\ref{lemma:est}). Thus, the definition of Fourier transform can be extended to any left K-invariant $L^2$ function on $X$.

Since, $\widehat{ f}(\l)$ is a linear form on $\C^\cW$, we define $\cF {f}(\l)\in \C^\cW$ such that
\[\langle\cF (f)(\l), \eta \rangle_{\C^\cW} := \langle f, \eE(-\overline{\l},\eta) \rangle_{L^2(X)} = \widehat{ f}(\l)\eta, \qquad \text{for all}\; \eta \in \C^\cW,\]
where $\langle\cdot,\cdot \rangle_{\C^\cW}$ is the standard inner product on $\C^\cW$ and $\langle\cdot,\cdot \rangle_{L^2(X)}$ is the $L^2$ inner product on $X$. 

Now, for a $\C^\cW$- valued function $\phi$ on $i\R{}$ we define the wave packet $\cJ$ as 
\[\cJ (\phi)(x) = \int_{i\R{}}\eE(\l,\phi(\l))(x)d\l\qquad x \in X.\]

Let \[L = \{\l_k = \rho -1-m_1^+-m_2^+ -2k: k\in \bN_0\; \text{and}\; \l_k > 0\},\]
which is a finite set. Notice that
$L= \emptyset$ in the Riemannian case. Let $f \in C_c^\infty(X)^K$. The Fourier Inversion formula for the general rank cases is discussed in \cite[Thm. 7.1]{vS99} and the explicit formula given below for the split rank one symmetric spaces is given in \cite[Eq. 5-13, p.128]{AO05}. 
    \begin{align*}
        f(a_tw\cdot x_0) &= \cJ (\cF {f})(a_t) + 4\pi i\sum_{\l_k \in L} \mr{Res}_{\l = -\l_k} [\Phi_\l (t) \cF{f} (\l)]_w\\
        &=\int_{i\R{}} \eE_w(\lambda, \cF{ f}(\lambda))(t) d\lambda + 4\pi i\sum_{\l_k \in L} \mr{Res}_{\l = -\l_k} [\Phi_\l (t) \cF{f} (\l)]_w.
    \end{align*}

Note that $\Phi_\l = \Phi_\l^0$ for $\l \notin 1/2\mathbb{Z}$. Hence, \bes \mr{Res}_{\l = -\l_k}[\Phi_\l(t) \cF f(\l)]_w = \mr{Res}_{\l = -\l_k}[\Phi_\l^0(t) \cF f(\l)]_w = \Phi_{-\l_k}^0(t) \mr{Res}_{\l = -\l_k}[\cF f(\l)]_w.\ees Using this and Proposition~\ref{prop:serieshyp} we can rewrite the Fourier inversion formula as follows:

\begin{theorem}[Fourier Inversion]\label{thm:Finversion}

    For $f \in C_c^\infty(X)^K$ the Fourier inversion formula is given by
    \begin{equation}\label{inversion-1}
        f(a_tw\cdot x_0) = \int_{i\R{}} \eE_w(\lambda, \cF{ f}(\lambda))(t) d\lambda + 4\pi i\sum_{\l_k \in L} \Phi_{-\l_k}^0 (t) \mr{Res}_{\l = -\l_k}[\cF{f} (\l)]_w.
    \end{equation}
 
\end{theorem}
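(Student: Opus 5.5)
The plan is to start from the explicit split rank one inversion formula quoted from \cite[Eq. 5-13, p.128]{AO05}, namely
\[
f(a_tw\cdot x_0)=\int_{i\R{}} \eE_w(\lambda, \cF f(\lambda))(t)\, d\lambda + 4\pi i\sum_{\l_k\in L}\mr{Res}_{\l=-\l_k}\big[\Phi_\l(t)\,\cF f(\l)\big]_w .
\]
The integral term is already in the required form, so only the residue terms need work. For each $\l_k\in L$ I will show that
\[
\mr{Res}_{\l=-\l_k}\big[\Phi_\l(t)\cF f(\l)\big]_w=\Phi^0_{-\l_k}(t)\,\mr{Res}_{\l=-\l_k}[\cF f(\l)]_w .
\]

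\emph{Step 1: $\cF f$ has at most a simple pole at $-\l_k$.} Fix $t>0$. By definition, $\langle \cF f(\l),\eta\rangle=\sum_w\int_0^\infty f(a_tw\cdot x_0)\,\eE_w(-\l,\eta)(t)J(t)\,dt$. Since $f$ has compact support, this is an integral over a compact $t$-range. By the Proposition on poles, $\l\mapsto \eE_w(-\l,\eta)(t)$ has only simple poles, and $\l=-\l_k$ corresponds to the simple pole $-\l=\l_k$. Multiplying by $(\l+\l_k)$ gives a function that is holomorphic near $-\l_k$ and locally uniformly bounded in $t$; this follows from Lemma~\ref{lemma:gamma}(1) and the estimate in Lemma~\ref{lemma:est}. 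Differentiating under the integral then shows that $(\l+\l_k)\cF f(\l)$ is holomorphic near $-\l_k$.

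\emph{Step 2: $\Phi_\l(t)$ is holomorphic at $-\l_k$ and equals $\Phi^0_\l(t)$ there.} Since $-\l_k<0$, it lies outside $\tfrac12\bN_0$, which by Lemma~\ref{lemma:gamma}(2)(a) contains all poles of the coefficients $\Gamma_m$. Hence $\Phi_\l(t)$ is holomorphic in a small disc $D$ around $-\l_k$, using the uniform estimates of Lemma~\ref{lemma:gamma}(2)(b),(c) for $t>0$.

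On the other side, $\Phi^0_\l(t)=(2\cosh t)^{\l-\rho}{}_2F_1(\cdot,\cdot;1-\l;\cosh^{-2}t)$ has argument $\cosh^{-2}t<1$ for $t>0$. Its lower parameter $1-\l$ is near $1+\l_k>1$, so it is not a nonpositive integer. Therefore the hypergeometric series converges and is holomorphic in $\l$ on $D$.

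Proposition~\ref{prop:serieshyp} gives $\Phi_\l=\Phi^0_\l$ on $D\setminus\tfrac12\bN_0$. After shrinking $D$ this set is all of $D$, and at worst it is $D$ minus a discrete set, so by continuity $\Phi_{-\l_k}=\Phi^0_{-\l_k}$.

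\emph{Step 3: extract the residue.} A holomorphic function times a function with a simple pole has residue equal to the holomorphic factor's value times the residue of the pole. This gives the displayed identity, and substituting it into the quoted formula yields (\ref{inversion-1}).

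As a consistency check, I will plug $\l=-\l_k$ into the parameters of $\Phi^0_\l$. The second upper parameter becomes $(-\rho+\l_k+1+m_1^++m_2^+)/2=-k$, and the first becomes $(\rho+\l_k)/2$. So ${}_2F_1$ reduces to a polynomial in $\cosh^{-2}t$, which matches the formula for $\Phi^0_{-\l_k}$ in the introduction.

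The main obstacle is Step 2, namely justifying holomorphy of $\Phi_\l(t)$ near $-\l_k$ together with the identification $\Phi_\l=\Phi^0_\l$ there. Lemma~\ref{lemma:gamma} settles this because the Harish-Chandra coefficients are singular only at positive half-integers. If it is cleaner, I will instead take the identity theorem on the connected region $\Re\l<0$ as the justification.
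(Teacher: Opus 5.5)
Your proposal is correct and follows the paper's own argument: you start from the split rank one formula quoted from \cite[Eq. 5-13, p.128]{AO05} and use $\Phi_\l=\Phi^0_\l$ near $-\l_k$ to rewrite each residue as $\Phi^0_{-\l_k}(t)\,\mr{Res}_{\l=-\l_k}[\cF f(\l)]_w$. Beyond what the paper writes, you also verify the two facts it leaves implicit: that $\cF f$ has at most a simple pole at $-\l_k$, and that $\Phi^0_\l(t)$ is holomorphic there, which is needed because $-\l_k\in\tfrac12\mathbb{Z}$.
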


\begin{Remark}
  Let the discrete part of the space $L^2(X)^K$, denoted by $L^2(X)^K_d$ and let $P: L^2(X)^K \rightarrow L^2(X)^K_d$ be the orthogonal projection. Let
\[Tf(a_tw\cdot x_0) :=  4\pi i\sum_{\l_k \in L} \mr{Res}_{\l = -\l_k} [\Phi_\l (t) \cF{f} (\l)]_w.\]
It is seen in \cite[Thm. 21.2, Def. 12.1 and Lemma 12.6]{vdS05} that $T$ is the restriction to $C_c^\infty(X)^K$ of the orthogonal projection $P$. 
Let $\mathcal{C}^2(X)^K$ be the space of $L^2$-Schwartz functions on $X$ defined in Section 3 and let $\mathcal{A}_2(X)^K$ be the space of $\mathbb{D}(X)$-finite functions in $\mathcal{C}^2(X)^K$. It was shown in \cite[Lemma 12.6]{vdS05} that $\mathcal{A}_2(X)^K = L^2(X)^K_d$. Thus, for $f \in C_c^\infty(X)^K$ the function $Tf$ lies in the discrete part $L^2(X)^K_d = \mathcal{A}_2(X)^K$.
\end{Remark}

\section{$L^r$ - Schwartz space}

Let $f$ be a K-invariant smooth function $X$. Then using the $KA_qH$ decomposition and parameterizing $A_q$ with $\R{}$,  we can write $f(t) := f(ka_th)$ for $t \in \R{}$.
For $0 < r \leq 2$, we define the $L^r$-Schwartz space $\mathcal{C}^r(X)^K$ on $X$ as the space of all left K-invariant smooth functions on $X$ such that for all $n \in \bN_0$ and for any $D \in \mathbb{D}(X)$,  we have
\[\tau_{n, D}(f) = \sup_{t >0} (1+t)^ne^{(2/r)\rho t}\left|D f(a_t\cdot x_0)\right| < \infty.\]
We multiply by the factor $e^{(2/r)\rho t}$ in the definition of $\tau_{n,m}$ so that the function $f$ is in $L^r(X)$. We now claim that in this split rank one case, $f\in\mathcal C^r(X)^K$ if and only if for all $m, n\in \mathbb N_0$, \be \label{defn:schwartz}
\sup_{t>0}(1+t)^n e^{(2/r)\rho t}\left|\left(\frac{d}{dt}\right)^m f(a_t\cdot x_0)\right| < \infty. 
\ee
To prove this claim we first consider the functions \[r(t) = e^{-t}(e^t-e^{-t})^{-1} \quad\text{and} \quad s(t) = (e^t-e^{-t})^{-1}.\] Let $\mathcal{A}$ be the algebra generated by the functions $\{1, r(t), s(t), r'(t),s'(t),\ldots\}$. Denote $L_u$ and $R_u$ as the infinitesimal action of $u \in \mathcal{U}(G/H)$ on $C^\infty(G)$, induced by the left and right regular representation $L$ and $R$, respectively. Let $D \in \mathbb{D}(G/H)$, then there exists an ${u} \in \mathcal{U}(G/H)^H$ such that $D = R_u$. 

By \cite[Lemma 3.5]{vB87}, for every element $X_{-\alpha} = \theta (X_\alpha) \in \overline{\mathfrak{n}}$, for $f \in C^\infty(X)^K$ and for any $t>0$, we can write
\begin{equation}\label{eq:nbardecompose}R_{X_{-\alpha}}f(a_t\cdot x_0)
=
s(t)\,R_{\operatorname{Ad}(a_t^{-1})(X_\alpha+\theta X_\alpha)}
f(a_t\cdot x_0)
-
r(t)\,R_{X_\alpha+\tau X_\alpha}
f(a_t\cdot x_0),
\end{equation}
 where $X_\alpha +\theta X_\alpha \in \mathfrak{k}$ and $X_\alpha + \tau X_\alpha \in \mathfrak{h}$. In order to prove (\ref{defn:schwartz}), we now prove the following estimate for the invariant differential operator.

\begin{lemma}
    For any $D \in \mathbb{D}(G/H)$ and for a fixed $\delta >0$, there exists a constant $C_\delta$ and a positive integer $n \in \bN$ such that
    \begin{equation*}
        |Df (a_t)| \leq C_\delta\sum_{i=1}^n \left|\frac{d^i}{d t^i} f(a_t)\right|,
    \end{equation*}
    for $t \geq \delta$ and $f \in C^\infty(X)^K$. 
\end{lemma}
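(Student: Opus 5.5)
Because the space $\mathbb{D}(G/H)$ is the polynomial algebra $\C[\Delta]$, every $D$ is a polynomial in $\Delta$. So the plan is to prove the estimate for $\Delta^j$ by induction on $j$, and then sum over the monomials.

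\textbf{Base case $j=1$.} By (\ref{eq:LaplaceBeltrami}), on a left $K$-invariant function $f$ the operator $\Delta$ acts along $A_q$ through its radial part:
\[\Delta f(a_t) = f''(t) + a(t) f'(t),\qquad a(t)=m_1^+\coth t+m_1^-\tanh t+2m_2^+\coth 2t+2m_2^-\tanh 2t .\]
The coefficient $a(t)$ is real analytic on $(0,\infty)$. It and all its derivatives are bounded on $[\delta,\infty)$, since $\coth t$, $\tanh t$ and their derivatives are bounded there and tend to finite limits as $t\to\infty$. Equivalently, $a(t)$ can be written as a combination of $1$, $r(t)$, $s(t)$ and the $\tanh$ terms, all lying in a suitable enlargement of the algebra $\mathcal{A}$, whose elements are bounded with all derivatives on $[\delta,\infty)$. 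This gives the bound for $\Delta$ with $n=2$.

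\textbf{Inductive step.} Suppose, more precisely, that
\[\Delta^j f(a_t)=\sum_{i=1}^{2j} b_{i,j}(t)\,\frac{d^i}{dt^i}f(a_t),\]
where each $b_{i,j}$ is smooth on $(0,\infty)$ and bounded with all its derivatives on $[\delta,\infty)$. Note that no zeroth-order term appears, because $\Delta 1=0$.

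Apply $\Delta = d^2/dt^2 + a\, d/dt$ to this expression. The Leibniz rule produces terms of the form $b^{(p)}_{i,j}\,a^{(q)}\,f^{(i+s)}$. Each such term has a coefficient bounded on $[\delta,\infty)$ and a derivative order between $1$ and $2j+2$. This gives the representation for $\Delta^{j+1}$ with coefficients in the same class.

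Taking absolute values yields $|\Delta^j f(a_t)|\le C_{\delta,j}\sum_{i=1}^{2j}|f^{(i)}(t)|$ for $t\ge\delta$. For a general $D=\sum_j c_j\Delta^j$, the constant term $c_0 f$ also appears. The displayed statement, with its sum running from $i=1$, should then be read as including $i=0$ (or as a statement for $D$ without constant term); I would add the $i=0$ term.

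\textbf{Main obstacle.} The main step is justifying that $D$ acts on $K$-invariant functions through its radial part, with coefficients bounded away from $t=0$. If one prefers to avoid relying on $\mathbb{D}(G/H)=\C[\Delta]$, one instead writes $D=R_u$ with $u\in\mathcal{U}(G/H)^H$ and uses the decomposition (\ref{eq:decompose}) together with Poincaré–Birkhoff–Witt to express $u$ modulo $\mathcal{U}(\fg)\fh$ in terms of $\on$, $\mathfrak{l}_{kq}$ and $\aq$. One then repeatedly applies (\ref{eq:nbardecompose}) to push $\on$-factors into $\mathfrak{k}$ on the left, where they annihilate $f$ by $K$-invariance, and into $\fh$ on the right, where they vanish. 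Each application introduces factors from $\mathcal{A}$ and their derivatives, all bounded on $t\ge\delta$, while $\aq$ contributes $d/dt$. The $\mathfrak{l}_{kq}$-terms need care; they can be handled because $\mathfrak{l}_{kq}\subset\fk$ and commutes with $\aq$.

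Given the paper's radial formula, the first route is the one I would write.
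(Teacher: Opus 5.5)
Your argument is correct, but it takes a different route from the paper's.

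\textbf{The paper's route.} The paper never uses the generator $\Delta$. It writes $D=R_u$ with $u$ in the enveloping algebra and inducts on the degree of $u$. Each $X\in\fg$ is split along $\fg=\on\oplus\fl_{kq}\oplus\aq\oplus\fh$, and the pieces are handled as follows.
\begin{itemize}
\item The $\fl_{kq}$-component vanishes by left $K$-invariance, since it centralizes $\aq$.
\item The $\fh$-component vanishes by right $H$-invariance. At later stages it is traded for a lower-degree term $R_{[X_\fh,Y]}f$.
\item The $\on$-component is rewritten via (\ref{eq:nbardecompose}) into a $\fk$-part acting on the left, which vanishes, and an $\fh$-part with coefficients $r(t),s(t)$.
\item Only the $\aq$-component contributes, giving $c_X\,d/dt$.
\end{itemize}
This produces $R_uf(a_t)=\sum_i z_i(t)f^{(i)}(t)$ with $z_i\in\mathcal{A}$.

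\textbf{Your route.} You reduce to powers of $\Delta$ via $\mathbb{D}(G/H)=\C[\Delta]$. You then iterate the explicit radial formula (\ref{eq:LaplaceBeltrami}), using two facts: $\Delta^j f$ stays left $K$-invariant, and $\coth$, $\tanh$ and all their derivatives are bounded on $[\delta,\infty)$.

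\textbf{What each buys.} Your argument is shorter and entirely elementary once those two inputs are granted. It also gives explicit coefficients, polynomials in $\coth t,\tanh t,\coth 2t,\tanh 2t$. It does, however, rest on $\mathbb{D}(G/H)$ being generated by $\Delta$. The paper asserts this for rank one and relies on it again in the Remark after the lemma, so you are consistent with the paper. The paper's $\mathcal{U}(\fg)$ induction needs only $\dim\aq=1$. It therefore applies to every $R_u$ even if $\mathbb{D}(G/H)$ had further generators, which is exactly your fallback sketch.

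\textbf{The index.} Your remark about the summation index is right. For $D=\mathrm{id}$ and $f$ constant, the inequality with $\sum_{i=1}^n$ fails. The paper's own induction carries a $z_0(t)f(a_t)$ term, so the statement should read $\sum_{i=0}^n$. This is harmless for (\ref{defn:schwartz}), since $m=0$ is included there.
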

\begin{proof}
    We will use induction on the degree of $u$. Using (\ref{eq:decompose}) any $X \in \fg$ can be decomposed as
    \[X = X_{\overline{\fn}} + X_{\mathfrak{l}_{kq}} + c_XT + X_{\fh}.\]
Consequently, $R_X = R_{X_{\overline{\fn}}} + R_{X_{\mathfrak{l}_{kq}}} + R_{c_X T} + R_{X_{\fh}}$. Let $f \in C^\infty(X)^K$. Since, $f$ is a right $H$-invariant function, we obtain
\begin{equation*}
    R_{X_{\fh}}f(a_t) = \frac{d}{d s}\Big|_{s =0} f(a_t \mr{exp} (sX_{\fh}))= \frac{d}{d s}\Big|_{s =0} f(a_t)=0.
\end{equation*}
Next, we will see that the $c_XT$ part gives a first order derivative.
\begin{align*}
    R_{c_X T} f(a_t) &= \frac{d}{d s}\Big|_{s =0} f(a_t \mr{exp} (s c_XT)) = \frac{d}{d s}\Big|_{s =0} f(\mr{exp}(t T) \mr{exp} (s c_XT))\\
    &= \frac{d}{d s}\Big|_{s =0} f(\mr{exp}((t +s c_X)T)) = c_X \frac{d}{d t} f(a_t).
\end{align*}
    Similar to $X_\fh$ part we will get that $X_{\fl_{kq}}$ also gives zero. This is because,  $\mr{exp}(sX_{\fl_{kq}})$  lies in the centralizer of $A_q$ in $K$ and $f$ is a left $K$-invariant function. Thus,
    \begin{align*}
        R_{X_{\fl_{kq}}} f(a_t) &= \frac{d}{d s}\Big|_{s =0} f(a_t \mr{exp} (s X_{\fl_{kq}})) = \frac{d}{d s}\Big|_{s =0} f( \mr{exp} (s X_{\fl_{kq}}) a_t )\\
        & = \frac{d}{d s}\Big|_{s =0} f(a_t ) = 0.
    \end{align*}
    Without loss of generality, assume that $X_{\on} = \theta X_{\alpha}$ for some $\alpha >0$. From (\ref{eq:nbardecompose}) we have 
    \[X_{\on} = (e^t-e^{-t})^{-1}Ad(a_t^{-1}) (X_\alpha +\theta X_\alpha)  -e^{-t}(e^t-e^{-t})^{-1} (X_\alpha + \tau X_\alpha).\]
Then, $R_{X_{\on}} = (e^t-e^{-t})^{-1} R_{Ad(a_t^{-1}) (X_\alpha +\theta X_\alpha)} -e^{-t}(e^t-e^{-t})^{-1} R_{(X_\alpha + \tau X_\alpha)}$. As, $X_\alpha + \tau X_\alpha \in \fh$, we obtain that $R_{(X_\alpha + \tau X_\alpha)} f(a_t) =0$. Furthermore,
\begin{align*}
    R_{Ad(a_t^{-1}) (X_\alpha +\theta X_\alpha)} f(a_t) &= \frac{d}{d s}\Big|_{s =0} f(a_t \mr{exp} (sAd(a_t^{-1}) (X_\alpha +\theta X_\alpha) )) = \frac{d}{d s}\Big|_{s =0} f(a_t Ad(a_t^{-1}) \mr{exp} (s (X_\alpha +\theta X_\alpha) ))\\
    &= \frac{d}{d s}\Big|_{s =0} f( \mr{exp} (s (X_\alpha +\theta X_\alpha)  )a_t) = \frac{d}{d s}\Big|_{s =0} f( a_t ) =0,
\end{align*}
as $X_\alpha +\theta X_\alpha \in \fk$. Therefore, we obtain $R_{X_\fn} f(a_t) = 0$ and 
\[R_X f(a_t) = c_X \frac{d}{d t} f(a_t).\]

Assume that for $u \in \mathcal{U}(G/H)_{n-1}$, we have
\[R_u f(a_t) = \sum_{i=0}^{n-1}z_i(t) \frac{d^i}{dt^i} f(a_t)\]
with $z_i(t) \in \mathcal{A}$. Let $X Y \in \mathcal{U}(G/H)_{n}$ with $X \in \fg$ and $Y \in \mathcal{U}(G/H)_{n-1}$. Let $X = {X_{\overline{\fn}} + X_{\mathfrak{l}_{kq}} + c_XT + X_{\fh}}$ and without loss of generality assume that $X_{\on} = \theta X_\alpha$. Then
\begin{equation*}
    R_{XY} f(a_t) = R_X R_Y f(a_t) = \Big(s(t) R_{Ad(a_t^{-1}) (X_\alpha +\theta X_\alpha)} - r(t) R_{(X_\alpha + \tau X_\alpha)}+ R_{X_{\mathfrak{l}_{kq}}} + R_{c_XT} + R_{X_{\fh}}\Big)(R_Y f)(a_t).
 \end{equation*}
Since, left and right derivatives commute we see that $R_{Ad(a_t^{-1}) (X_\alpha +\theta X_\alpha)} (R_Y f)(a_t) = L_{X_\alpha +\theta X_\alpha}R_Y f(a_t) = R_Y L_{X_\alpha +\theta X_\alpha} f(a_t) =0$. Because $X_{\fl_{kq}} \in \fk$ centralizes $\fa_q$, we also obtain $R_{X_{\fl_{kq}}}R_Y f(a_t) = L_{X_{\fl_{kq}}} R_Y f(a_t) = R_Y L_{X_{\fl_{kq}}} f(a_t) =0$. 
Now, 
\[\begin{split}
    R_{c_XT} (R_Y f)(a_t) &= c_X \frac{d}{d t} R_Yf(a_t) = c_X \frac{d}{d t} \left(\sum_{i=0}^{n-1}z_i(t) \frac{d^i}{dt^i} f(a_t)\right)\\
    &= c_Xz_0'(t) f(a_t) + c_X\sum_{i=1}^{n-1} [z_{i-1}(t)+z_i'(t)] \frac{d^i}{dt^i} f(a_t) + c_Xz_{n-1}(t) \frac{d^n}{dt^n} f(a_t) .
\end{split}\]

Now we consider the $\fh$-part. We have
\begin{align*}
    r(t) R_{(X_\alpha + \tau X_\alpha)} R_Y f(a_t) &= r(t)\left(R_Y R_{(X_\alpha + \tau X_\alpha)} f(a_t) + R_{[(X_\alpha + \tau X_\alpha), Y]} f(a_t)\right) \\&=r(t) R_{[(X_\alpha + \tau X_\alpha), Y]} f(a_t) \\
    & = r(t)\sum_{i=0}^{n-1}z_i(t) \frac{d^i}{dt^i} f(a_t), \quad \text{for some $z_i \in \mathcal{A}$},
\end{align*}
as $R_{(X_\alpha + \tau X_\alpha)} f(a_t) = 0$ and $[X_\alpha + \tau X_\alpha, Y] \in \mathcal{U}(G/H)_{n-1}$. We also obtain the same for $X_\fh$ part following the same steps.
Thus,
\[R_X R_Y f(a_t) = \sum_{i=0}^{n}z_i(t) \frac{d^i}{dt^i} f(a_t), \quad \text{for some $z_i \in \mathcal{A}$}.\]
Now, fix $\delta >0$. Then for all $i = 0,1,\ldots,n$, the functions $z_i \in \mathcal{A}$ are bounded for $t \geq \delta$. Therefore, we obtain the desired estimate. 
\end{proof}

\begin{Remark}
As a result of this lemma, it suffices to consider the derivatives with respect to the radial variable $t$. Note that the Laplace-Beltrami operator $\Delta$ acts radially on left $K$-invariant functions. By (\ref{eq:LaplaceBeltrami}), the radial part $L(\Delta)$ is a polynomial in $\frac{\partial}{\partial t}$. Hence the claim (\ref{defn:schwartz}) is established. 
\end{Remark}

Then $\mathcal{C}^r(X)^K$ is a Fr\'echet space equipped with the seminorm $\tau_{n, D}$. We observe that  $\mathcal{C}^r(X)^K \subseteq \mathcal{C}^{r'}(X)^K$ for $r\leq r'$.
 It is also easy to check that \bes \mathcal{C}^r(X)^K \subseteq L^r(X)^K, \ees where $L^r(X)^K$ is the space of functions in $L^r(X)$ which are left $K$-invariant.
 
We notice from (\ref{eq:eisen}) that $\eE_w$ has no poles in the region $-1/2 < \Re \l < 0$ and hence $\cF(\l)$ has no pole in $0<\Re\lambda<1/2$. Thus, we fix an $0 < \epsilon_0 < 1/2$. 
For each $ 0 < r\leq 2$, we recall \bes \gamma_r = (2/r-1)\rho, \text{ and } {S_r := i\R{} + [-\gamma_r, \epsilon_0]}.\ees  
For $k\in\mathbb N_0$, we let \bes \l_k = \rho -1-m_1^+-m_2^+ -2k.\ees We note that $\lambda_k$'s are poles of $\eE_w(\l,\eta)(t)$ and hence $-\lambda_k$'s are poles of $\cF f(\l)$.
For each $0 <r\leq 2$, let $p_r$ be a fixed polynomial which has zeros only at $-\l_k$ where $|\l_k|<\gamma_r$ in $S_r$. We fix the polynomial $p_r$ through rest of the article.

We define the Schwartz space $\mathscr{S}(S_r)_e$ consists of all functions $\phi: S_r\rightarrow\C^\cW$ such that
\begin{enumerate}
    \item $\phi$ is smooth on $i\R{}$.
    \item For $r\neq 2$, the function $p_r(\l)\phi(\l)$ and all its derivatives are holomorphic on $S_r^\circ$ and extend continuously to $S_r$.
    \item $\phi$ has simple poles at $-\l_k$ with $0<\l_k < \gamma_r, k\in\mathbb N_0$.
    \item For any polynomial $q$ and $n\in \mathbb N_0$
    \[\omega_{n,q}(\phi) = \sup_{\l \in S_r}(1+|\l|)^{n-\text{deg}\; p_r}\, \left\|q\left(\frac{\partial}{\partial\l}\right)p_r(\l)\phi(\l)\right\|<\infty.\]
    \item $\phi(-\l) = C^0(-1,\l)\phi(\l)$.
    \end{enumerate}
In the definition of $\omega_{n,q}$ we divide by the factor $(1+|\l|)^{\text{deg}\; p_r}$ to compensate for the polynomial growth introduced by multiplying $\phi$ with $p_r$. The topology on $\mathscr{S}(S_r)_e$ defined by the seminorms $\omega_{n,q}$ makes it a Fr\'echet space. Moreover, 
\[\mathscr{S}(S_r)_e \subset \mathscr{S}(i\R{})_e,\]
for $0 < r < 2$ (in the sense that, if $f\in \mathscr{S}(S_r)_e$, then $f|_{i\mathbb R}\in \mathscr{S}(i\R{})_e$). We will now collect some facts about $L^2$ Schwartz space in the following theorem.
\begin{theorem}\label{thm:S2}
\begin{enumerate}
    \item  The Fourier transform $\cF: \mathcal{C}^2(X)^K \longrightarrow \sS(i\R{})_e$ is a continuous linear  map.
    \item The map $\cJ: \sS(i\R{})_e \longrightarrow \mathcal{C}^2(X)^K$ is a continuous linear  map. 
    \item Moreover, the operator $\cF \cJ$ is a continuous identity operator on $\sS(i\R{})_e$.
    \item The kernel of $\cF$ is given by $\sS^2_d(X)^K := ker (\cJ \cF)$ and the image of $\cJ$ is given by $\sS^2_c(X)^K:= ker (\cJ - \cJ \cF)$. 
\end{enumerate}
\end{theorem}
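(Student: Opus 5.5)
This theorem is the $L^2$ case ($r=2$) of the Harish-Chandra/van den Ban--Schlichtkrull theory, specialised to left $K$-invariant functions on a split rank one space. The plan is to derive it from the radial reduction of Section~2 and the explicit formulas for $\eE_w$, and not to rebuild the general theory.

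\textbf{Step 1 (part (1)).} For $f\in\mathcal{C}^2(X)^K$ and $\l\in i\R{}$, write
\[
\langle\cF f(\l),\eta\rangle=\sum_w\int_0^\infty f(a_tw)\,\eE_w(-\l,\eta)(t)\,J(t)\,dt .
\]
On $i\R{}$ there are no poles, by the Remark after the pole proposition, so Lemma~\ref{lemma:est} can be used with $p_R\equiv 1$ near $i\R{}$. This gives $|\partial_\l^n\eE_w(-\l,\eta)(t)|\le A\|\eta\|(1+t)^{n+1}e^{-\rho t}$. Since $J(t)\le Ce^{2\rho t}$ and $|f(a_t)|\le \tau_{N,1}(f)(1+t)^{-N}e^{-\rho t}$, differentiating under the integral sign bounds the $\l$-derivatives of $\cF f$.

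Polynomial decay in $\l$ comes from the differential equation. Since $L(\Delta)$ is formally self-adjoint with respect to $J\,dt$, we have $\cF(\Delta^k f)(\l)=(\l^2-\rho^2)^k\cF f(\l)$. The boundary terms vanish because $\eE_w$ is regular at $t=0$ and $J(0)=0$ (or the Jacobian is smooth across $0$), while at infinity the Schwartz decay beats $J$. Combining the two bounds shows that every seminorm $\omega_{n,q}(\cF f)$ is controlled by finitely many $\tau_{n,D}(f)$. The symmetry $\cF f(-\l)=C^0(-1,\l)\cF f(\l)$ follows from the functional equation of $\eE$, i.e.\ the identity $\widehat f(-\l)(\eta)=\widehat f(\l)(C^0(-1,\l)\eta)$. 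This gives continuity into $\sS(i\R{})_e$.

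\textbf{Step 2 (part (2)).} For $\phi\in\sS(i\R{})_e$, bound $\cJ\phi(a_tw)=\int_{i\R{}}\eE_w(\l,\phi(\l))(t)\,d\l$. Split the range of $t$.
\begin{itemize}
\item For $t\le 1$: Lemma~\ref{lemma:est} together with the rapid decay of $\phi$ bounds all $t$-derivatives directly.
\item For $t\ge1$: use the expansion $\eE_w(\l,\eta)(t)=\eta_w[\Phi_\l(t)+c(\l)\Phi_{-\l}(t)]$. By the symmetry condition (5), the two terms contribute equally, so $\cJ\phi(t)=2\int_{i\R{}}\Phi_\l(t)\phi_w(\l)\,d\l$. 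Expand $\Phi_\l(t)=e^{(\l-\rho)t}\sum_m\Gamma_m(\l)e^{-mt}$, with $q_R\Gamma_m$ controlled by Lemma~\ref{lemma:gamma}. This produces $e^{-\rho t}$ times Euclidean Fourier transforms of Schwartz functions in $\l$, which decay faster than any power of $t$.
\end{itemize}

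The main obstacle is that $\phi$ is only smooth on $i\R{}$, while $\Gamma_m(\l)$ and $c(\l)$ may have a pole at $\l=0$. I would first try to show that in $\Phi_\l+c(\l)\Phi_{-\l}$ the singularities at $0$ cancel, since $\eE$ is regular there. Then I would cut off near $\l=0$ and handle the compact piece by the bounded estimate $|\eE_w|\le C(1+t)e^{-\rho t}$. That bound alone does not give rapid decay in $t$. To recover it, I would integrate by parts in $\l$ on the cut-off piece, using the oscillation $e^{\l t}$ and applying the $\l$-derivative estimate of Lemma~\ref{lemma:est} to the remainder. Derivatives in $t$ bring down powers of $\l$, which are absorbed by the decay of $\phi$. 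This yields continuity of $\cJ$ into $\mathcal{C}^2(X)^K$.

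\textbf{Step 3 (parts (3) and (4)).} For part (3), use the Plancherel theorem together with Theorem~\ref{thm:Finversion} on $C_c^\infty(X)^K$. For $\psi\in C_c^\infty$-type test functions, $\cF\cJ\phi$ is computed by pairing, using that $\cJ$ is the adjoint of $\cF$ restricted to the continuous part. In the pairing, the residue terms are orthogonal to wave packets: they lie in $L^2_d$ by the Remark following Theorem~\ref{thm:Finversion}. Hence $\cF\cJ=\mathrm{Id}$ on a dense subspace, and by the continuity from Steps 1 and 2 on all of $\sS(i\R{})_e$.

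For part (4), note first that $\cF(\cJ\cF f)=\cF f$ by part (3). So $f\in\ker\cF$ implies $\cJ\cF f=0$, and conversely $\cJ\cF f=0$ gives $\cF f=\cF\cJ\cF f=0$. Hence $\ker\cF=\ker\cJ\cF$. The image of $\cJ$ consists exactly of the $f$ with $f=\cJ\cF f$. Indeed, if $f=\cJ\phi$ then $\cJ\cF f=\cJ\cF\cJ\phi=\cJ\phi=f$, and conversely such an $f$ is visibly in the image. This identifies the image with $\ker(\cJ-\cJ\cF)$, with the identity operator implicit in the statement's notation.
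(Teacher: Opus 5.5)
The paper does not prove this theorem. Each part is quoted from \cite{VS97-3} (Lemma 16.2, Thm.~16.4, Thm.~16.6, Prop.~17.3), and the only internal remark is that (4) follows from $\cF\cJ\cF=\cF$. Your Steps 1 and 2 are a reasonable rank-one replacement for the first two citations, and your argument for (4) is exactly the intended one. The gap is in part (3).

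Trace what your pairing argument actually gives. For $\psi\in C_c^\infty(X)^K$:
\begin{itemize}
\item adjointness gives $\langle\cF\cJ\phi,\cF\psi\rangle_{L^2(i\R{})}=\langle\cJ\phi,\cJ\cF\psi\rangle_{L^2(X)}$;
\item the inversion formula gives $\cJ\cF\psi=\psi-T\psi$;
\item orthogonality of wave packets to $L^2(X)^K_d$ kills $\langle\cJ\phi,T\psi\rangle$.
\end{itemize}
Hence $\cF\cJ\phi-\phi\perp\cF(C_c^\infty(X)^K)$, or equivalently $\cF\cJ=\mathrm{Id}$ on the subspace $\cF(C_c^\infty(X)^K)$. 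To conclude $\cF\cJ\phi=\phi$ for all $\phi\in\sS(i\R{})_e$, you need $\cF(C_c^\infty(X)^K)$ to be dense in $L^2(i\R{})_e$ (or in $\sS(i\R{})_e$). Nothing in your proposal, or in the paper's preliminaries, supplies this.

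The inversion formula on $C_c^\infty(X)^K$ is the one-sided identity $\cJ\cF=\mathrm{Id}-T$. It identifies $\cJ\cF$ with the projection onto the continuous part. It says nothing about whether the range of $\cF$ exhausts $L^2(i\R{})_e$, and that completeness is exactly the content of $\cF\cJ=\mathrm{Id}$. If by ``the Plancherel theorem'' you mean that $\cF$ is unitary from $(L^2(X)^K_d)^\perp$ onto $L^2(i\R{})_e$, then (3) is immediate from $\cF\cF^{*}=\mathrm{Id}$, and the dense-subspace step is superfluous. But then you are citing the result in another guise, just as the paper does.

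To close the gap in rank one you need a genuinely new ingredient. One option is to compute $\cF\cJ\phi(\mu)$ directly:
\begin{itemize}
\item truncate the $t$-integral at $R$;
\item use Green's formula for $L(\Delta)$ to write $\int_0^R\eE_w(\l,\cdot)(t)\,\eE_w(-\mu,\cdot)(t)\,J(t)\,dt$ as a Wronskian boundary term divided by $\l^2-\mu^2$;
\item insert the expansions in $\Phi_{\pm\l}$ and pass to the limit $R\to\infty$ by a Dirichlet-kernel/Riemann--Lebesgue argument, using $c(\l)c(-\l)=1$ and condition (5) to combine the cross terms.
\end{itemize}
The other option is to prove density of the range directly, for instance via its invariance under multiplication by spherical transforms of $K$-biinvariant functions together with Stone--Weierstrass. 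Either way, this is the real substance of \cite[Thm.~16.6]{VS97-3}.

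A smaller point: the obstacle you flag in Step 2 does not occur.
\begin{itemize}
\item By Lemma~\ref{lemma:gamma} the poles of $\Gamma_m$ lie in $\{1/2,\dots,m/2\}$, so $\Phi_\l$ is holomorphic near $i\R{}$.
\item $c(\l)$ is regular on $i\R{}$: $\Gamma(-\l)/\Gamma(\l)\to-1$ at $0$, and any coincident Gamma poles at $0$ cancel.
\item Moreover $|c|=1$ there, because $c(\l)c(-\l)=1$.
\end{itemize}
So the expansion argument for $t\ge\delta$ works as it stands, exactly as in the paper's proof of Lemma~\ref{lemma:Sp}. This matters, because your fallback would not work: the $\l$-derivative bounds of Lemma~\ref{lemma:est} grow like $(1+t)^{n+1}$, so integrating by parts with them cannot produce decay in $t$.
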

\begin{proof}
Part (1) is proved in \cite[Lemma 16.2]{VS97-3} and part (2) is proved in \cite[Thm. 16.4]{VS97-3}.
    From \cite[Thm 16.6]{VS97-3} we have part (3). The proof of part (4) is given in \cite[Prop. 17.3]{VS97-3}. The following is used in the proof 
\begin{equation}\label{eq:FIF=F}
    \cF \mathcal{J} \cF = \cF \quad \text{on} \; \mathscr{S}^2(X)^K,
\end{equation}

 which follows from part (3).
\end{proof}
The Fourier inversion formula also holds for functions in $L^2$-Schwartz space which was proven in \cite[Thm 21.2]{vdS05} and independently in \cite[Thm. 2]{D99}. We recall that \[L = \{\l_k = \rho -1-m_1^+-m_2^+ -2k: k\in \bN_0\; \text{and}\; \l_k > 0\}.\]

\begin{theorem}\label{thm:FISchwartz}
    Let $f \in \mathcal{C}^2(X)^K$. Then the following inversion formula holds:
    \[f(a_tw\cdot x_0) =  \int_{i\R{}} \eE_w(\lambda, \cF{ f}(\lambda))(t) d\lambda + 4\pi i\sum_{\l_k \in L} \Phi_{-\l_k}^0 (t) \mr{Res}_{\l = -\l_k}[\cF{f} (\l)]_w.\]
\end{theorem}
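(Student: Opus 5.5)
The idea is to start from Theorem~\ref{thm:Finversion}, which gives the inversion formula on $C_c^\infty(X)^K$, and pass to $\mathcal{C}^2(X)^K$ by density and continuity. Each term in the formula will be controlled by a continuous seminorm on $\mathcal{C}^2(X)^K$.

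First, I will use that $C_c^\infty(X)^K$ is dense in $\mathcal{C}^2(X)^K$. To see this, take a smooth radial cutoff $\chi_R(t)=\chi(t/R)$ and set $f_R=\chi_R f$. The difference $f-f_R$ is supported in $t\ge R$. By the Leibniz rule and (\ref{defn:schwartz}), $\sup_t(1+t)^n e^{\rho t}|(d/dt)^m(f-f_R)|\le C(1+R)^{-1}\sup_t(1+t)^{n+1}e^{\rho t}\sum_{j\le m}|f^{(j)}|$, and this tends to $0$. Near $t=0$ nothing changes, so smoothness at the origin is not an issue.

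Next, I will fix $t>0$ and $w\in\cW$ and check that each side of the formula is a continuous functional of $f\in\mathcal{C}^2(X)^K$. The left side $f\mapsto f(a_tw\cdot x_0)$ is bounded by $\tau_{0,1}(f)$ up to a factor $e^{-\rho t}$.

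For the integral term, Theorem~\ref{thm:S2}(1) says that $\cF:\mathcal{C}^2(X)^K\to\sS(i\R{})_e$ is continuous. On $i\R{}$, Lemma~\ref{lemma:est} with $\Re\l=0$ gives $|\eE_w(\l,\eta)(t)|\le A\|\eta\|(1+t)(1+|\l|)^{N}e^{-\rho t}$; here $p_R$ is nonvanishing on $i\R{}$ by the Remark, so it can be divided out locally uniformly. Hence $\left|\int_{i\R{}}\eE_w(\l,\cF f(\l))(t)\,d\l\right|\le C_t\,\omega_{N+2,1}(\cF f)$, which is a continuous seminorm of $f$.

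For the residue term, the map $f\mapsto \mr{Res}_{\l=-\l_k}[\cF f(\l)]_w$ has to be continuous. This is the step I expect to be the main obstacle, because $\cF f(\l)$ a priori lives only on $i\R{}$ for $f\in\mathcal{C}^2$.

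To handle it, I note from (\ref{eq:eisen}) that the pole of $\eE_w(-\l,\eta)$ at $\l=-\l_k$ comes only from the Gamma prefactor. So the residue equals $c_k\,\overline{\eta_w}$-weighted times $\int_0^\infty f(a_tw)\,\varphi_{-i\l_k}^{(\alpha,\beta)}(t)J(t)\,dt$, up to the constant $c_k$ from the Gamma residue. Equivalently, using (\ref{eq:eisenhyp}), the residue equals $c(\cdot)$-residue times $\int f\,\Phi^0_{-\l_k}J$.

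The function $\Phi^0_{-\l_k}(t)=O(e^{(-\l_k-\rho)t})$ decays faster than $e^{-\rho t}$. Therefore this integral converges absolutely for $f\in\mathcal{C}^2$ and is bounded by $C\tau_{2,1}(f)$. Indeed $J(t)\le Ce^{2\rho t}$, so the integrand is $O((1+t)^{-2}e^{-\l_k t})$.

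Alternatively, the residue functional could be identified with the $L^2$ inner product of $f$ against $\Phi^0_{-\l_k}$, which lies in $L^2(X)^K_d$ by the Remark after Theorem~\ref{thm:Finversion}. That would make its continuity immediate.

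Finally, I will conclude by density. Both sides of the formula are continuous linear functionals on $\mathcal{C}^2(X)^K$, and they agree on the dense subspace $C_c^\infty(X)^K$ by Theorem~\ref{thm:Finversion}. Hence they agree on all of $\mathcal{C}^2(X)^K$, for every $t>0$ and $w\in\cW$. By continuity in $t$ the identity also holds at $t=0$. This can alternatively be cited from \cite[Thm 21.2]{vdS05}.
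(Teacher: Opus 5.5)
Your argument is correct, but it takes a different route from the paper. The paper gives no proof of Theorem~\ref{thm:FISchwartz}: it only cites \cite[Thm 21.2]{vdS05} and \cite[Thm. 2]{D99}. There the inversion formula for Schwartz functions comes out of the general Plancherel theory (arbitrary rank, $K$-finite functions), and the residual term is identified abstractly as the projection onto the discrete part.

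You instead derive the formula in the split rank one, $K$-invariant setting from Theorem~\ref{thm:Finversion}. Your ingredients are:
\begin{enumerate}
\item density of $C_c^\infty(X)^K$ in $\mathcal{C}^2(X)^K$;
\item continuity of $\cF$ (Theorem~\ref{thm:S2}(1)) for the wave-packet term;
\item an explicit formula for the residue functional.
\end{enumerate}
This is more elementary, and it also makes precise what the statement means. For general $f\in\mathcal{C}^2(X)^K$ the integral defining $\cF f(\lambda)$ diverges once $\Re\lambda\neq 0$: $|E^\circ_w(-\lambda,\eta)(t)|$ grows like $e^{(|\Re\lambda|-\rho)t}$, while $f$ only decays like $e^{-\rho t}$. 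So $\cF f$ need not continue meromorphically to $-\lambda_k$. The symbol $\mathrm{Res}_{\lambda=-\lambda_k}[\cF f(\lambda)]_w$ therefore only makes sense as the continuous extension, from $C_c^\infty(X)^K$, of the functional $f\mapsto c_k\int_0^\infty f(a_tw)\Phi^0_{-\lambda_k}(t)J(t)\,dt$. You should state this explicitly as the definition for $f\in\mathcal{C}^2(X)^K$, rather than saying the residue ``equals'' this integral. This reading agrees with the paper's Remark identifying the residual term with the orthogonal projection onto $L^2(X)^K_d$.

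There are two small inaccuracies to fix.
\begin{enumerate}
\item The Remark does not say that $p_R$ is nonvanishing on $i\mathbb{R}$. When $\rho-1-m_1^+-m_2^+\in 2\mathbb{N}_0$, one of the $\lambda_k$ is $0$ and $p_R(0)=0$; the Remark only says $E^\circ_w$ is regular there. The bound $|E^\circ_w(\lambda,\eta)(t)|\le A\|\eta\|(1+t)(1+|\lambda|)^N e^{-\rho t}$ on $i\mathbb{R}$ still holds. It follows from the Jacobi-function representation of $E^\circ_w$, Koornwinder's estimate \cite{K75}, and the continuity and polynomial growth of the Gamma prefactor on $i\mathbb{R}$.
\item The identity (\ref{eq:eisenhyp}) is stated only for $\lambda\notin\mathbb{Z}$. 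If some $\lambda_k$ is an integer, identifying the residue of $E^\circ_w$ at $\lambda_k$ with a multiple of $\Phi^0_{-\lambda_k}$ needs a limiting argument. The identification remains true: that residue is a multiple of the Jacobi function at $\lambda_k$, in which the coefficient of the growing solution vanishes. So your decay estimate, and hence continuity of the residue functional on $\mathcal{C}^2(X)^K$, is unaffected.
\end{enumerate}
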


Assume that $L \neq \emptyset$. Consider the function $\Phi^0_{-\l_k}(t)$ with $\l_k \in L$ defined in Lemma~\ref{eq:eisenhyp} where for $x = ka_th \in X$
\be \label{eqn:phi^0}\Phi^0_{-\l_k}(ka_th) := \Phi^0_{-\l_k}(t):= (2\cosh t)^{-\l_k-\rho}{}_2F_1\left(\frac{\rho+\l_k}{2}, -k; 1+\l_k; \cosh^{-2}t \right).\ee
We observe that ${}_2F_1\left(\frac{\rho+\l_k}{2}, -k; 1+\l_k; \cosh^{-2}t \right)$ is a polynomial in $(\cosh t)^{-2}$.
We recall that for $0 < r\leq 2$, \bes L_r = \{\l \in L: \l  < \gamma_r\} \text{ and } L_r^c=\{\l\in L: \l>\gamma_r\},
\ees
where $\gamma_r=(\frac{2}{r}-1)\rho$. Then for $0<r<1$ we have
\[L_r = L \text{ and } L_r^c=\emptyset.\]
\begin{lemma}\label{lem:Phi^0}
    For $0 < r\leq 2$, the function $\Phi^0_{-\l_k}$ is in $\mathcal{C}^r(X)^K$ if and only if $\l_k\in L_r^c$.
\end{lemma}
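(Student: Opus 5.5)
The plan is to read off the exact exponential decay of $\Phi^0_{-\l_k}$ and all its $t$-derivatives, and then compare it with the weight $e^{(2/r)\rho t}$ in (\ref{defn:schwartz}). By the Remark after the lemma on invariant differential operators, it suffices to work with (\ref{defn:schwartz}).

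\emph{Smoothness.} By (\ref{eqn:phi^0}), the hypergeometric factor terminates, since its second parameter is $-k$. Hence
\[\Phi^0_{-\l_k}(t)=(2\cosh t)^{-\l_k-\rho}\,P_k(\cosh^{-2}t),\]
where $P_k$ is a polynomial of degree at most $k$ with $P_k(0)=1$. This is an even, real-analytic function of $t\in\R{}$, so it defines a smooth left $K$-invariant function on $X$. Its value on each $w$-component is the same formula, so the $w$-dependence causes no difficulty.

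\emph{Asymptotics.} Write $\cosh^{-2}t=4e^{-2t}(1+e^{-2t})^{-2}$ and $(2\cosh t)^{-\l_k-\rho}=e^{-(\l_k+\rho)t}(1+e^{-2t})^{-\l_k-\rho}$. For $t\geq 1$ this gives a convergent expansion
\[\Phi^0_{-\l_k}(t)=e^{-(\l_k+\rho)t}\sum_{j\ge0}b_je^{-2jt},\qquad b_0=1.\]
The series may be differentiated termwise. Therefore, for each $m$,
\[\tfrac{d^m}{dt^m}\Phi^0_{-\l_k}(t)=e^{-(\l_k+\rho)t}\bigl((-(\l_k+\rho))^m+O(e^{-2t})\bigr).\]
Since $\l_k+\rho>0$, the leading coefficient is nonzero. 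So $|\tfrac{d^m}{dt^m}\Phi^0_{-\l_k}(t)|\asymp e^{-(\l_k+\rho)t}$ as $t\to\infty$.

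\emph{Comparison with the weight.} Note that $(2/r)\rho-\rho=\gamma_r$. The seminorms in (\ref{defn:schwartz}) then become
\[\sup_{t>0}(1+t)^ne^{(\gamma_r-\l_k)t}\,\bigl|e^{(\l_k+\rho)t}\tfrac{d^m}{dt^m}\Phi^0_{-\l_k}(t)\bigr|.\]
On $[0,1]$ everything is bounded by smoothness. If $\l_k>\gamma_r$, i.e. $\l_k\in L_r^c$, the factor $(1+t)^ne^{(\gamma_r-\l_k)t}$ is bounded for every $n$, so all seminorms are finite and $\Phi^0_{-\l_k}\in\mathcal C^r(X)^K$. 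Conversely, if $\l_k<\gamma_r$, take $m=0$ and $n=0$: the quantity grows like $e^{(\gamma_r-\l_k)t}$, so $\Phi^0_{-\l_k}\notin\mathcal C^r(X)^K$.

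\emph{Boundary case.} The remaining case is $\l_k=\gamma_r$. Then $\l_k$ belongs to neither $L_r$ nor $L_r^c$, and $\gamma_r$ is a singularity of $\eE$, which Theorem~\ref{thm:L-r-schwartz} excludes. In this case $(1+t)^n$ times a function tending to a nonzero constant is unbounded for $n\geq1$, so $\Phi^0_{-\l_k}\notin\mathcal C^r(X)^K$, consistent with $\l_k\notin L_r^c$.

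The only step needing care is the asymptotic expansion: one has to justify termwise differentiation and confirm that the leading coefficient is nonzero. Both follow from the explicit polynomial structure of $P_k$ together with the binomial expansion of $(1+e^{-2t})^{s}$ for $t\ge1$.
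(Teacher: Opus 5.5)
Your proposal is correct and follows the same route as the paper. The paper's proof simply notes that $(\cosh t)^{-\delta-\rho}\in\mathcal{C}^r(X)^K$ exactly when $\delta>\gamma_r$, and reads the result off the terminating hypergeometric formula for $\Phi^0_{-\lambda_k}$. You supply the details the paper leaves implicit: the leading term $e^{-(\lambda_k+\rho)t}$ (nonzero because $P_k(0)=1$) survives in every $t$-derivative, so against the weight $e^{(\gamma_r+\rho)t}$ all seminorms are finite precisely when $\lambda_k>\gamma_r$. You also handle the boundary case $\lambda_k=\gamma_r$, which the paper does not address.
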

\begin{proof}
    It is easy to check that the function $(\cosh t)^{-\delta-\rho}$ is in $\mathcal{C}^r(X)$ if and only if $\delta>\gamma_r$. Then from (\ref{eqn:phi^0}), the result follows.
\end{proof}
Below we give the description of the kernel of $\cF: \mathcal{C}^2(X)^K \longrightarrow \sS(i\R{})_e$  explicitly.

\begin{theorem} Let $\cF: \mathcal{C}^2(X)^K \longrightarrow \sS(i\R{})_e$ be the Fourier transform. 
\begin{enumerate}
    \item If $L$ is an empty set, then the Fourier transform $\cF$ is injective.
    \item Suppose that $L$ is non empty set. Then the kernel of $\cF$ is the span of the functions $\{\Phi^0_{-\l_k}(t): \l_k \in L\}$.
\end{enumerate}
     
\end{theorem}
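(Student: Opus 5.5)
The plan is to use the $L^2$-Schwartz space inversion formula, Theorem~\ref{thm:FISchwartz}, together with Lemma~\ref{lem:Phi^0} for $r=2$. Since $\gamma_2=0$ and every $\l_k\in L$ is positive, we have $L_2^c=L$. Lemma~\ref{lem:Phi^0} then gives $\Phi^0_{-\l_k}\in\mathcal{C}^2(X)^K$ for every $\l_k\in L$, so each of these functions lies in the domain of $\cF$.

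\emph{Kernel contained in the span.} Let $f\in\mathcal{C}^2(X)^K$ with $\cF f=0$ on $i\R{}$. The continuous part $\int_{i\R{}}\eE_w(\l,\cF f(\l))(t)\,d\l$ of the inversion formula then vanishes. What remains is
\[
f(a_tw\cdot x_0)=4\pi i\sum_{\l_k\in L}\Phi^0_{-\l_k}(t)\,\mr{Res}_{\l=-\l_k}[\cF f(\l)]_w .
\]
This needs some care, because the residues refer to the meromorphic continuation of $\cF f$ to $0<-\Re\l\le\rho$, and that continuation exists since $f$ decays like $e^{-2\rho t}$. The formula shows $f$ lies in the span of $\{\Phi^0_{-\l_k}:\l_k\in L\}$. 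It does not matter that the residues need not vanish: each one is just a scalar coefficient. If $L=\emptyset$, the sum is empty and $f=0$, which gives part~(1).

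\emph{Span contained in the kernel.} We must show $\cF\Phi^0_{-\l_k}=0$ on $i\R{}$. The functions $\Phi^0_{-\l_k}$ and $\eE(-\l,\eta)$ are both eigenfunctions of $L(\Delta)$, with eigenvalues $\l_k^2-\rho^2$ and $\l^2-\rho^2$ respectively. Because $L(\Delta)=J^{-1}\partial_t(J\partial_t)$ is formally symmetric, we integrate by parts on $[\delta,T]$:
\[
(\l_k^2-\l^2)\int_\delta^T \Phi^0_{-\l_k}\,\eE_w(-\l,\eta)\,J\,dt=\Big[J\big(\Phi^0_{-\l_k}{}'\,\eE_w-\Phi^0_{-\l_k}\,\eE_w'\big)\Big]_\delta^T .
\]
The boundary term at $T\to\infty$ vanishes. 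Indeed, $J\sim e^{2\rho t}$, $\Phi^0_{-\l_k}=O(e^{-(\l_k+\rho)t})$, and by Lemma~\ref{lemma:est} the function $\eE_w(-\l,\eta)$ and its $t$-derivative are $O((1+t)e^{-\rho t})$ for $\l\in i\R{}$. The product is therefore $O((1+t)e^{-\l_k t})\to0$.

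The boundary term at $t=\delta\to0$ (and at the seam between the $w$-components, if $\cW=\{\pm1\}$) is the \emph{main obstacle}. Two facts give smoothness at $t=0$: ${}_2F_1(\cdot,-k;\cdot;\cosh^{-2}t)$ is a polynomial in $\cosh^{-2}t$, and $\eE_w$ is regular at $t=0$. Since $J(t)\to0$ as $t\to0$ whenever $m_1^++m_2^+>0$, the term vanishes in that case. To justify this I would argue that $\Phi^0_{-\l_k}$ defines a smooth $K$-invariant function on $X$; equivalently, I would check directly that it is the regular solution at $t=0$. The check would use the connection formula (\ref{eq:hyperasymp}): at $\l=-\l_k$, the coefficient $c(\l)$ blows up exactly at the pole of $\eE_w(\l,\eta)$, and the residue of $\eE_w(\l,\eta)$ at $\l=\l_k$ is a nonzero multiple of $\Phi^0_{-\l_k}$. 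A residue of the smooth family $\eE_w$ is again smooth on $X$, which also shows it is $\Delta$-finite. With the boundary terms gone, $\langle\Phi^0_{-\l_k},\eE(-\bar\l,\eta)\rangle=0$ for all $\l\in i\R{}$ because $\l_k^2-\l^2\neq0$ there. Hence $\cF\Phi^0_{-\l_k}=0$.

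As a cleaner alternative for this last step, I could use the Remark: $\Phi^0_{-\l_k}\in\mathcal{A}_2(X)^K=L^2(X)^K_d$, which is orthogonal to the continuous spectrum. Since $\cF$ is computed by pairing against the tempered family $\eE(\l,\cdot)$ with $\l\in i\R{}$, this orthogonality forces $\cF\Phi^0_{-\l_k}=0$. I would present the integration-by-parts argument and cite this as a cross-check.
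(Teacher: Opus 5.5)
Your proposal is correct and follows essentially the paper's proof. The $\mathcal{C}^2$ inversion formula (Theorem~\ref{thm:FISchwartz}) gives injectivity when $L=\emptyset$ and $\ker\cF\subseteq\mathrm{span}\{\Phi^0_{-\l_k}\}$, while the symmetry of $L(\Delta)$ together with $\l^2\neq\l_k^2$ on $i\R{}$ gives the reverse inclusion. Your boundary-term bookkeeping just spells out what the paper compresses into ``$\Delta$ is self-adjoint.''

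One side remark is wrong, though harmless. Functions in $\mathcal{C}^2(X)^K$ decay only like $(1+t)^{-n}e^{-\rho t}$, not like $e^{-2\rho t}$. So $\cF f$ need not continue off $i\R{}$ at all, and the residue terms in Theorem~\ref{thm:FISchwartz} have to be read as the continuous extension from $C_c^\infty(X)^K$ of the discrete projection. Up to constants, these coefficients are the pairings $\int_0^\infty f(a_tw\cdot x_0)\Phi^0_{-\l_k}(t)J(t)\,dt$, which converge because $\Phi^0_{-\l_k}(t)=O(e^{-(\l_k+\rho)t})$. Your argument only uses that they are scalars, so it stands.
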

\begin{proof}
Suppose that $L = \emptyset$ and $f$ is an $L^2$- Schwartz function with $f \in \mr{Ker}(\cF)$.  Then by the Fourier inversion from Theorem~\ref{thm:FISchwartz} we get 
   \[f(a_tw\cdot x_0) = \int_{i\R{}}\eE_w(\lambda, \cF{ f}(\lambda))(t) d\lambda = 0.\]
   Thus, $f =0$ and hence the Fourier transform $\cF $ is an injective map.  
   
Suppose that $L \neq \emptyset$. From Lemma \ref{lem:Phi^0}, we have $\Phi^0_{-\l_k}$ is in $\mathcal{C}^2(X)^K$. 
Using the fact that the Laplace-Beltrami operator $\Delta$ is a self adjoint operator we get 
    \[\int_{0}^\infty (\Delta \Phi^0_{-\l_k}(t)) \eE_w(-\l,\eta)(t)J(t) dt = \int_{0}^\infty  \Phi^0_{-\l_k}(t) (\Delta \eE_w(\l,\eta)(t))J(t) dt, \quad \l \in i\R{}\]
    Since, $\Phi^0_{-\l_k}(t)$ and $\eE_w(-\l,\eta)(t)$ are eigenfunctions of $\Delta$ with eigenvalues $\l_k^2-\rho^2$ and $\l^2-\rho^2$, respectively. Therefore, we obtain
    \[(\l_k^2-\rho^2)\int_0^\infty\Phi^0_{-\l_k}(t) \eE_w(-\l,\eta)(t)J(t) dt = (\l^2-\rho^2)\int_0^\infty\Phi^0_{-\l_k}(t) \eE_w(-\l,\eta)(t)J(t) dt.\]
    As $\l \in i\R{}$, we have $\l^2 \neq \l_k^2$ and conclude that
    \be \label{ft:discrete}\int_0^\infty\Phi^0_{-\l_k}(t) \eE_w(-\l,\eta)(t)J(t) dt = 0.\ee
    This shows that $\Phi_{-\l_k}^0$ is in the kernel of $\cF$.
    
    Conversely, let $f$ is in kernel of $\cF$. That is, $\cF f(\l)=0$ for all $\l\in i\mathbb R$. Then by the inversion formula (\ref{inversion-1}) we have \begin{equation}\nonumber
        f(a_tw\cdot x_0) = 4\pi i\sum_{\l_k \in L} \Phi_{-\l_k}^0 (t) \mr{Res}_{\l = -\l_k}[\cF{f} (\l)]_w.
    \end{equation}
    Hence the kernel of $\cF$ is in the span of $\{\Phi^0_{-\l_k}(t): \l_k \in L\}$. 
\end{proof}

\begin{Remark}
   (1) We have $m_1^- = m_2^- = 0$ if and only if $X$ is Riemannian symmetric space. This implies that $L = \emptyset$ if $X$ is Riemannian symmetric space. Thus, the Fourier transform is injective.
   Moreover, it was proven in \cite{HC75,HC76a,HC76b} that $\cF$ is an isomorphism on $\mathcal{C}^2(X)^K$ for Riemannian spaces (also see \cite{He84}).\\
   (2) Consider the Pseudo Riemannian hyperbolic spaces defined as
   \[SO_e(p,q)/SO_e(p-1,q) \simeq \{x \in \R{p+q}: x_1^2+\ldots +x_p^2-x_{p+1}^2-\ldots -x_{p+q}^2=1\}.\]
   In the case $\rho = (p+q-2)/2$ and $L = \{\rho-q-2k: k \in \bN_0 \text{ and } \rho-q-2k>0\}$. If $p \leq q+2$ then $L$ is an empty set (see \cite[P. 77]{A01}) . Thus, for $p \leq q+2$ the Fourier transform $\cF$ is an injective map.
\end{Remark}

We now prove our main theorem Theorem \ref{thm:L-r-schwartz} for $L^r$-Schwartz spaces ($0 < r\leq 2$). We recall that, for a $\C^\cW$- valued function $\phi$ on $i\R{}$ the wave packet $\cJ$ is defined as 
\[\cJ (\phi)(x) = \int_{i\R{}}\eE(\l,\phi(\l))(x)d\l\qquad x \in X.\]

For $\phi \in \sS(S_r)_e$, using the Weyl invariance of measure $d\l$ the wave packet $\cJ$ becomes
  \begin{align*}
      \cJ \phi (a_tw\cdot x_0) &= \int_{i\R{}} \eE(\l,\phi(\l))(a_tw) d\l =\int_{i\R{}} (\Phi_\l(t) + c(\l)\Phi_{-\l}(t))[\phi(\l)]_w d\l \\
      & = 2 \int_{i\R{}} \Phi_\l(t)[\phi(\l)]_w d\l.
  \end{align*}
For $\phi \in \sS(S_r)_e$ we define,  \[\cI_r\phi (a_tw\cdot x_0):= {2}\int_{i\R{} - \gamma_r} \Phi_\l(t)[\phi(\l)]_w d\l = 2\int_{i\R{}} \Phi_{\l-\gamma_r}(t)[\phi(\l-\gamma_r)]_w d\l. \]

For the proof of Theorem~\ref{thm:L-r-schwartz}, we decompose the argument into the following lemmas, which together establish the result.

\begin{lemma}
    Let $0 < r \leq 2$ . The Fourier transform map $\cF: \mathscr{S}^r(X)^K \longrightarrow \mathscr{S}(S_r)_e$ is continuous linear  map.
\end{lemma}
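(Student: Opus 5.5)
The plan is to define $\cF f(\l)$ for $\l\in S_r$ by the same integral $\langle \cF f(\l),\eta\rangle=\sum_{w}\int_0^\infty f(a_tw\cdot x_0)\eE_w(-\l,\eta)(t)J(t)\,dt$, and then check the conditions in the definition of $\sS(S_r)_e$ one at a time, using Lemma~\ref{lemma:est} as the main tool. Fix $R>\gamma_r$ and let $p_R$ be the polynomial of Lemma~\ref{lemma:gamma}. For $\l\in S_r$ we have $-\l\in-\fa^*(R)$, so $p_R(-\l)\eE_w(-\l,\eta)(t)$ is holomorphic in $\l$. It is bounded by $A\|\eta\|(1+t)(1+|\l|)^{\deg p_R}e^{(|\Re\l|-\rho)t}$, with $|\Re\l|\le\gamma_r$ (we may take $\epsilon_0\le\gamma_r$ when $r<2$; for $r=2$ the strip collapses to $i\R{}$ and Theorem~\ref{thm:S2}(1) applies directly).

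Since $J(t)\le Ce^{2\rho t}$ and $|f(t)|\le\tau_{n,0}(f)(1+t)^{-n}e^{-(2/r)\rho t}$, the integrand is dominated by $(1+t)^{1-n}e^{(\gamma_r-\rho+2\rho-(2/r)\rho)t}=(1+t)^{1-n}$. This is integrable for $n\ge 3$, uniformly on the closed strip. Differentiation under the integral and Morera's theorem then give that $p_R(-\l)\cF f(\l)$ is holomorphic on $S_r^\circ$ and continuous up to the boundary. The $\l$-derivatives are handled by the second estimate of Lemma~\ref{lemma:est}: this only adds powers $(1+t)^{n}$, which are absorbed by the seminorms $\tau_{n,0}$. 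The poles of $\cF f$ inside $S_r$ are therefore among the zeros of $p_R(-\l)$ lying in $S_r$. These are the points $-\l_k$ with $0<\l_k<\gamma_r$ (the family $-\rho-2k$ gives $\l=\rho+2k>\epsilon_0$, outside the strip), and they are simple by the preceding proposition. This gives conditions (1)–(3), with $p_r$ replacing $p_R$ up to a factor nonvanishing on $S_r$; the hypothesis that $\gamma_r$ is not a singularity ensures no pole sits on the boundary line. Condition (5) follows from the functional equation $\eE(-\l,\eta)=\eE(\l,C^0(-1,\l)\eta)$-type relation stated after the definition of $\widehat f$, extended from $i\R{}$ by analytic continuation.

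The main step is the polynomial decay in condition (4), i.e.\ bounding $(1+|\l|)^n$ times the transform. I would use the Laplace–Beltrami operator. Since $L(\Delta)\eE_w(-\l,\eta)=(\rho^2-\l^2)\eE_w(-\l,\eta)$, integrating by parts with respect to $J(t)\,dt$ using the self-adjoint form $J^{-1}\partial_t(J\partial_t)$ gives
\[(\rho^2-\l^2)^N\langle\cF f(\l),\eta\rangle=\langle\cF(\Delta^Nf)(\l),\eta\rangle .\]
The hard part is justifying that the boundary terms vanish. At $t=0$ this holds because both functions are smooth and even and $J(0)=0$ or $J'$ is harmless there. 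At $t=\infty$ it holds because $J\,f'\,\eE$ and $J\,f\,\eE'$ decay like $(1+t)^{-n}$ by the growth computation above, using the $t$-derivative estimate of Lemma~\ref{lemma:est}. Since $\Delta^Nf\in\mathcal C^r(X)^K$ with seminorms controlled by finitely many $\tau_{n,D}(f)$, and $|\rho^2-\l^2|^{-N}\lesssim(1+|\l|)^{-2N}$ away from $\l=\pm\rho$, which lies outside $S_r$ or is handled by the previous compact bound, we get
\[\omega_{n,q}(\cF f)\le C\sum_{\text{finite}}\tau_{n',D}(f).\]
This proves both that $\cF f\in\sS(S_r)_e$ and that $\cF$ is continuous; linearity is clear.
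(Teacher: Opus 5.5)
Your proposal is correct and follows essentially the same route as the paper. You use domination of the integrand via Lemma~\ref{lemma:est} to get holomorphy, continuity up to the boundary and the $\l$-derivative bounds, and then the eigen-equation of $\Delta$ to trade powers of $\l$ for applications of $\Delta$ to $f$. The paper writes the latter as $\cF((\Delta+\rho^2)f)=\l^2\cF f$ with an induction on derivatives, which avoids your separate treatment near $\l=\pm\rho$.

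The one loose spot is the boundary term at $t=0$. When $\cW=\{\pm1\}$ and $J(0)\neq 0$ (e.g.\ $SO_e(p,1)/SO_e(p-1,1)$), the restrictions $t\mapsto f(a_tw\cdot x_0)$ are not even, so the two $w$-components' boundary terms cancel against each other rather than vanishing individually; you should argue this from the symmetry of $\Delta$ on all of $X$.
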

\begin{proof}
    By the properties of Fourier transform we know that $\cF f(\l)$ is smooth on $i\R{}$ and $\cF f(-\l) = C(-\l)\cF f(\l) $.  Moreover, for any $f \in \mathcal{C}^r(X)^K$ and $\l \in S_r$ we have from Lemma~\ref{lemma:est}  (also see \cite{PS25})
    \begin{align*}
        \left\|\frac{\partial^m}{\partial \l^m} p_r(\l)\cF{f}(\l)(\eta)\right\| &\leq \left\| \sum_{w \in \cW}\int_0^\infty f(a_tw) \left(\frac{\partial^m}{\partial \l^m}p_r(\l)\eE_w(-\l,\eta)(t)\right)J(t)dt\right\|\\
        &\leq A \sum_{w \in \cW}\int_0^\infty|f(a_tw)| \|\eta\| (1+t)^{m+1}(1+|\l|)^{\text{deg} \;p_r}e^{(\Re \l -\rho)t} J(t)dt\\
        &\leq 2A \|\eta\|\int_0^\infty (1+t)^{-n}e^{-2/r\rho t} (1+t)^{m+1} (1+|\l|)^{\text{deg}\;p_r} e^{(2/r -2)\rho t}e^{2\rho t}dt\\
        &\leq 2A \|\eta\|(1+|\l|)^{\text{deg}\;p_r}\int_0^\infty (1+t)^{-n}(1+t)^{m+1}dt \\
        &\leq A' \|\eta\|(1+|\l|)^{\text{deg}\;p_r}
    \end{align*}
    for large enough $n \in \bN_0$ and any $\eta \in \C^\cW$. Thus, $p_r(\l)\cF(\l)$ and all its derivatives are holomorphic on the strip $S_r^0$ and extend continuously to $S_r$.
    It remains to show that it is continuous. Without loss of generality let $q(\l) = 1$. Observe that $\cF (\Delta + \rho^2) f(\l) = \l^2 \cF f(\l)$. Using this and the Binomial expansion we have 
    \begin{align*}
        \omega_{k,1}(\cF f) &= \sup_{\l \in S_r}\left\|(1+|\l|)^{k-\text{deg}\; p_r}p_r(\l)\cF f(\l)\right\|\\
        & \leq C  \sup_{\l \in S_r}(1+|\l|)^{-\text{deg}\; p_r}\sum_{i = 0}^{k}\| p_r(\l)\cF (\Delta + \rho^2)^if (\l)\|\\
        &\leq C \sup_{\l \in S_r} \int_{0}^\infty (1+|\l|)^{-\text{deg}\; p_r}\sum_{i=0}^{k} |(\Delta+\rho^2)^{i} f(t)| (1+|\l|)^{\text{deg}\; p_r} (1+t)e^{(|\Re \l| -\rho)t} e^{2\rho t} dt <\infty.
    \end{align*}
    
We will use induction on $m$ and assume that $\omega_{k,m-1}(\cF) < \infty$ for any positive integer $k$. Let $q(\l) = \l^m$. Then using the Leibniz differentiation rule, we have $\omega_{k,m}(\cF f) =$
  \begin{align*}
        & \sup_{\l \in S_r}  \|\frac{\partial^m}{\partial\l^m}[(1+|\l|^2)^{k-\text{deg}\; p}p_R(\l)\cF f(\l)] - \sum_{i=0}^{m-1} \frac{\partial^{m-i}}{\partial\l^{m-i}}(1+|\l|^2)^{k-\text{deg}\; p} \frac{\partial^i}{\partial \l^i}(p_R(\l)\cF f(\l)) \|\\
       &\leq \sup_{\l \in S_r} \|\frac{\partial^m}{\partial\l^m}(1+|\l|^2)^{-\text{deg}\;p} \sum_{i=0}^{2k}p_R(\l)\cF(\Delta+\rho)^if(t) \| + A'\sum_{i=0}^{m-1} \omega_{k-m-i, i} (\cF f)\\
       &\leq A \sup_{\l \in S_r} \int_{i\R{}} (1+|\l|^2)^{-\text{deg}\; p} \sum_{i=0}^{2k} |(\Delta+\rho^2)^{i} f(t)| (1+|\l|)^{\text{deg}\; p} (1+t)^{m+1}e^{(|\Re \l| -\rho)t} e^{2\rho t} dt \\
       &+A'\sum_{i=0}^{m-1} \omega_{k-m-i, i} (\cF f)<\infty
  \end{align*}   

\end{proof}

\begin{lemma}\label{lemma:Sp}
    Let $0 < r \leq 2$ with $r$ such that $\gamma_r$ is not a singularity of $\eE_w$. Then the operator $\cI_r : \mathscr{S}(S_r)_e \longrightarrow \mathcal{C}^r(X)^K$ defined as 
    \[\cI_r\phi (a_tw\cdot x_0):= 2\int_{\Re\lambda=- \gamma_r} \Phi_\l(t)[\phi(\l)]_w d\l = 2\int_{i\R{}} \Phi_{\l-\gamma_r}(t)[\phi(\l-\gamma_r)]_w d\l, \]
    is a continuous operator. 
\end{lemma}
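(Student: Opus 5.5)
We need to show three things: $\cI_r\phi$ is left $K$-invariant and smooth on $X$; the seminorms $\sup_{t>0}(1+t)^n e^{(2/r)\rho t}|\frac{d^m}{dt^m}\cI_r\phi(t)|$ from (\ref{defn:schwartz}) are finite; and each is bounded by finitely many $\omega_{n',q}(\phi)$. We work on the shifted line $\Re\l=-\gamma_r$. Since $\gamma_r$ is not a singularity, $p_r$ does not vanish there, so $|p_r(\l)|\asymp(1+|\l|)^{\deg p_r}$ on that line. Then $\omega_{n,1}$ controls $\phi$ there: $\|\phi(\l)\|\le C\,\omega_{n,1}(\phi)(1+|\l|)^{-n}$. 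The $\l$-derivatives of $\phi$ are controlled the same way, by Leibniz applied to $p_r^{-1}\cdot(p_r\phi)$.

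\textbf{Large $t$, say $t\ge\delta$.} Write $\l=-\gamma_r+i\nu$. The line $\Re\l=-\gamma_r$ lies in $\fa^*(R)$ for any $R>0$. If $\gamma_r\notin\frac12\bN_0$, then $\Phi_\l$ is holomorphic on the line. By Lemma~\ref{lemma:gamma}(2), after dividing by $q_R$ (nonvanishing on the line under this assumption), we get
\[|\Phi_\l(t)|\le M_\delta(1+|\l|)^{N}e^{(\gamma_r-\rho)t}=M_\delta(1+|\l|)^N e^{-(2/r)\rho t}e^{2(\gamma_r - \gamma_r)t}\cdot e^{(2\gamma_r-\gamma_r)t}\ldots\]
More cleanly, $e^{(|\Re\l|-\rho)t}=e^{(\gamma_r-\rho)t}$. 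This yields only $e^{(2/r-2)\rho t}$, which is \emph{not} enough decay. So we must exploit the expansion $\Phi_\l(t)=e^{(\l-\rho)t}\sum_m\Gamma_m(\l)e^{-mt}$, in which the leading exponential is $e^{(\Re\l-\rho)t}=e^{-\gamma_r t-\rho t}=e^{-(2/r)\rho t}$. This is exactly the required weight. Thus $e^{(2/r)\rho t}\Phi_\l(t)=e^{i\nu t}\sum_m\Gamma_m(\l)e^{-mt}$, and by Lemma~\ref{lemma:gamma}(2b) the series converges uniformly for $t\ge\delta$ with polynomial growth in $|\l|$. The $t$-derivatives $\frac{d^m}{dt^m}$ produce factors $(\l-\rho-j)^m$, still polynomial in $|\l|$. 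The polynomial factor $(1+t)^n$ is handled by integrating by parts $n$ times in $\nu$: we write $t^n e^{i\nu t}=(-i\partial_\nu)^n e^{i\nu t}$ and move the $\nu$-derivatives onto $\Gamma_m(\l)\phi(\l)$, with no boundary terms because of rapid decay. The derivatives of $\Gamma_m$ are bounded polynomially by Cauchy estimates on small discs, as in the proof of Lemma~\ref{lemma:est}. Each term is therefore dominated by $\sup(1+|\l|)^{N'}\|\partial^j_\l\phi\|$, which is a finite combination of the $\omega_{n',q}(\phi)$.

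\textbf{Bounded $t\in(0,\delta]$, and smoothness.} Here the weights are bounded, so it suffices to bound $\frac{d^m}{dt^m}\cI_r\phi$ locally uniformly. The Harish-Chandra series is singular at $t=0$, so we do not use it here. Instead we shift the contour back to $i\R{}$. Consider $\phi\in\sS(S_r)_e$ with simple poles at $-\l_k$, $0<\l_k<\gamma_r$. Moving the line from $\Re\l=-\gamma_r$ to $\Re\l=0$ (the horizontal edges vanish by rapid decay) gives
\[\cI_r\phi=\cJ\phi+\text{(finite sum of residues at }-\l_k\text{ and at poles of }\Phi_\l\text{ in the strip)}.\]
The first term is smooth and bounded near $t=0$ by Theorem~\ref{thm:S2}(2). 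The residues at $-\l_k$ are multiples of $\Phi^0_{-\l_k}$, which is real analytic on $X$ (formula (\ref{eqn:phi^0})). The residues at the poles of $\Phi_\l$ in $\frac12\bN_0\cap(-\gamma_r,0)$ require care. Since $\phi(-\l)=c(\l)\phi(\l)$, I expect these to combine into smooth eigenfunction contributions, using $\Phi_\l^0$ in place of $\Phi_\l$ via Proposition~\ref{prop:serieshyp}. Alternatively, and more simply, we avoid them by replacing $\Phi_\l$ with $\Phi^0_\l$ throughout: that function is given by ${}_2F_1$ in $\cosh^{-2}t$, whose poles are only at $\l\in\bN$, and these cancel in the combination. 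The residues are taken with coefficients bounded by $\omega$-seminorms (via the Cauchy integral over small circles around the poles), which gives continuity. This also shows $K$-invariance and smoothness, since all pieces are smooth $K$-invariant functions on $X$.

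\textbf{Main obstacle.} The hardest step is the small-$t$ regularity and the treatment of the poles of $\Phi_\l$ at $\l\in-\frac12\bN_0$ crossed (or met) in the strip. The assumption that $\gamma_r$ is not a singularity is used to ensure that $\Phi_\l$, $p_r$ and $q_R$ are nonvanishing and regular on the contour. I would first try to prove that the residues at half-integers cancel pairwise using the functional equation $\phi(-\l)=c(\l)\phi(\l)$ together with $\eE=\Phi_\l+c(\l)\Phi_{-\l}$, which is entire in the relevant region.
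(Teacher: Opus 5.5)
Your argument is essentially the paper's. The paper also shifts the contour between $i\R{}$ and $\Re\l=-\gamma_r$ to obtain $\cJ\phi=\cI_r\phi+4\pi i\sum_{\l_k\in L_r}\Phi^0_{-\l_k}(t)\,\mathrm{Res}_{\l=-\l_k}[\phi(\l)]_w$. This gives smoothness and membership in $\mathcal{C}^2(X)^K$, hence control near $t=0$. For $t\ge\delta$ it then argues as you do: the Harish-Chandra expansion on $\Re\l=-\gamma_r$ with leading factor $e^{-(2/r)\rho t}$, integration by parts in $\nu$ for the weight $(1+t)^n$, and Cauchy estimates $|\Gamma_m^{(n)}|\le M(1+m)^{\chi}$ on the line.

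The one thing to correct is your ``main obstacle'': it does not exist. By Lemma~\ref{lemma:gamma}(2)(a), the poles of $\Gamma_m$, and hence of $\Phi_\l$, lie in $\{1/2,1,\dots\}$, i.e.\ in the right half-plane. This is consistent with the recursion $m(m-2\l)\Gamma_m(\l)=\dots$ coming from $L(\Delta)\Phi_\l=(\l^2-\rho^2)\Phi_\l$. Consequently:
\begin{itemize}
\item $\Phi_\l$ is holomorphic, and $q_R$ has no zeros, on the closed strip $-\gamma_r\le\Re\l\le 0$, so your proviso ``if $\gamma_r\notin\frac12\bN_0$'' is automatically satisfied;
\item moving the contour crosses only the poles of $\phi$ at $-\l_k$, $\l_k\in L_r$, so no pairing of residues via $\phi(-\l)=c(\l)\phi(\l)$ and no passage to $\Phi^0_\l$ is needed;
\item the hypothesis on $\gamma_r$ only serves to keep those singularities $-\l_k$ off the line of integration.
\end{itemize}
With that observation your proof is complete. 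Your explicit bound on the residue coefficients by the $\omega$-seminorms, combined with continuity of $\cJ$, gives the continuity statement more carefully than the paper, which leaves this step implicit.
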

\begin{proof}
 For $\phi \in \sS(S_r)_e$ we have 
  \bes
      \cJ \phi (a_tw\cdot x_0)= 2 \int_{i\R{}} \Phi_\l(t)[\phi(\l)]_w d\l.
  \ees
We recall that $L_r = \{\l \in L: \l  < \gamma_r\}$ and let $X_{\gamma_r} = \{n/2: n \in \bN_0\; \text{and}\; n/2 < \gamma_r\}$. Let $\eta$ be such that  $\max \{L_r \cup X_{\gamma_r}\} < \Re \eta < \gamma_r$. We note that such an $\eta$ exists as $\gamma_r$ is not a singularity of $\eE$. Therefore, $-\max\gamma_r<-\Re\eta<-\max L_r$. We now want to shift the integration (defined in $\cJ$) to $\Re\lambda=-\gamma_r$.  Using the Cauchy's formula we have 
  
  \begin{align*}
     \int_{-is}^{is} + \int_{is}^{is-\eta} - \int_{-is - \eta}^{is-\eta}- \int_{-is -\eta}^{-is} (\Phi_\l(t)[\phi(\l)]_w)  d\l = 2\pi i \sum_{\l_k \in L_r} \mr{Res}_{\l = -\l_k} \Phi_{\l}(t)[\phi(\l)]_w.
\end{align*}
Moreover, for a fixed positive integer $n$
\begin{align*}
    \lim_{s \rightarrow \infty}|\int_{is}^{is-\eta} (\Phi_\l(t)[\phi(\l)]_w)  d\l|&= \lim_{s \rightarrow \infty}|\int_{0}^{\eta} (\Phi_{-\beta + is}(t)[\phi(-\beta + is)]_w)d\beta| \\ &\leq \lim_{s \rightarrow \infty} M \int_{0}^\eta e^{(\beta-\rho)t} (1 + s^2)^{-n} d\beta\\
    & \leq \lim_{s \rightarrow \infty} M M_1 (1+s^2)^{-n}= 0,
\end{align*}

where we use the decay of $\phi$ and the estimate (\ref{est phi}). Since, $\Phi_\l$ does not have a pole on the line $[is, is-\eta]$, we have $|\Phi_\l(t)| \leq M e^{(|\Re \l|-\rho)t}$.

\begin{tikzpicture}[>=stealth, scale=1.2]

\draw[->] (-3,0) -- (8,0) node[right] {$\Re\lambda$};

\draw[->] (0,-3) -- (0,3) ;
\node[left] at (5.3,.2) {$0$};
\node[left] at (5.4,1.8) {$is$};
\node[left] at (5.6,-1.8) {$-is$};
\node[below] at (0,-3.1) {$\Re\lambda=-\rho$};

\draw[->] (2,-3) -- (2,3);
\node[below] at (2,-3.1) {$\Re\lambda=-\gamma_r$};
\node[right] at (2.3,1.8) {$\eta+is$};
\node[right] at (2.3,-1.8) {$\eta-is$};

\draw[->] (3.5,-3) -- (3.5,3);
\node[below] at (3.5,-3.1) {$\Re\lambda=-\eta$};

\draw[->] (4.9,-3) -- (4.9,3)node[above] {$\Im\lambda$};
\node[below] at (4.9,-3.1) {$\Re\lambda=0$};

\draw[->] (4.9,1.8) -- (3.5,1.8);
\draw[<-] (4.9,-1.8) -- (3.5,-1.8);

\node[below] at (4.6,0) {$-\lambda_1$};
\node[below] at (3.9,0) {$-\lambda_2$};

\foreach \x in {1.2,1.6,.8, .4, 4.5,4.0}
    \draw (\x,0.08) -- (\x,-0.08);

\end{tikzpicture}

\vspace{.7cm}

Similarly, we can show that \bes \int_{-is -\eta}^{-is} (\Phi_\l(t)[\phi(\l)]_w) d\l \rightarrow 0.\ees Thus, we can shift the integral to $\Re\lambda= - \eta$ and write
\[\cJ \phi (a_tw\cdot x_0) = 2 \int_{i\R{}} \Phi_\l(t)[\phi(\l)]_w d\l= 2\int_{i\R{}} \Phi_{\l-\eta}(t)[\phi(\l-\eta)]_w d\l + 4\pi i \sum_{\l_k \in L_r} \mr{Res}_{\l = -\l_k} \Phi_{\l}(t)[\phi(\l)]_w. \]
Since $\phi$ and all its derivatives extend continuously on the boundary of $S_r$ and also satisfies the decay estimates (3) of the definition on the boundary of $S_r$, 

\begin{align*}
    \int_{i\R{}} | \Phi_{\l-\eta}(t)[\phi(\l-\eta)]_w| d\l &\leq M \int_{i\R{}}e^{(|\Re \eta|-\rho)t } (1+|\l|^2)^{-n} d\l\\
    & \leq M M_1e^{(\gamma_r - \rho)t} 
\end{align*}
Thus, by Lebesgue Dominated convergence theorem the operator
\[\cI_r \phi(a_tw\cdot x_0) = 2\lim_{\Re \eta \rightarrow \gamma_r}\int_{i\R{}}\Phi_{\l-\eta}(t)[\phi(\l-\eta)]_w d\l\]
is well defined. Using the fact that \bes \mr{Res}_{\l = -\l_k}\Phi_{\l}(t) [\phi(\l)]_w = \mr{Res}_{\l = -\l_k}\Phi_{-\l}^0(t)[\phi(\l)]_w = \Phi_{-\l_K}^0(t)\mr{Res}_{\l = -\l_k}[\phi(\l)]_w, \ees we obtain
\begin{equation}\label{eq: IpI}
   \cJ \phi (a_t w) = \cI_r \phi (a_t w) + 4\pi i \sum_{\l_k \in L_r}\Phi_{-\l_k}^0(t) \mr{Res}_{\l = -\l_k} [\phi(\l)]_w. 
\end{equation}

Also we have $\Phi_{-\l_k} \in \mathcal{C}^2(X)^K$. From Theorem~\ref{thm:S2}, we also have $\cJ \phi \in \mathcal{C}^2(X)^K$. Thus, $\cI_r \phi \in \mathcal{C}^2(X)^K$. It remains to prove that $\cI_r \phi$ is in $\mathcal{C}^p(X)^K$. By expanding $\Phi_\l$, the function $\cI_r \phi$ can be written as
\[\cI_r \phi(a_tw) = 2e^{(-\gamma_r -\rho)t}\sum_{m \in \bN_0}e^{-mt} \int_{i\R{}} \Gamma_m(\l-\gamma_r) e^{\l t}[\phi(\l-\gamma_r)]_w d\l.\]
Using the properties of Euclidean Fourier transform $\cF_{\mr{euc}}$ and for a fixed $\delta >0$ we get
\begin{align*}
    \tau_{n_1,n_2}(\cI_r \phi) &\leq M \sup_{t \geq \delta}(1+t)^{n_1}e^{(2/p )\rho t}\left|\frac{\partial^{n_2}}{\partial t^{n_2}} \cI_r \phi(a_t w) \right|\\
    & \leq M \sup_{t \geq \delta}\left| \sum_{m\in \bN_0}e^{-mt} (1+t)^{n_1} \frac{\partial^{n_2}}{\partial t^{n_2}}\int_{\R{}}\Gamma_{m}(i\l-\gamma_r)[\phi(i\l-\gamma_r)]_w e^{i\l t}   d\l \right|\\
    &\leq M \sup_{t \geq \delta}\left| \sum_{m\in \bN_0}e^{-mt}(1+t)^{{n_1}} \frac{\partial^{n_2}}{\partial t^{n_2}} \cF_{\mr{euc}}(\Gamma_m(i\l-\gamma_r)[\phi(i\l-\gamma_r)]_w) \right|\\
    &\leq M \sup_{t \geq \delta} \sum_{m\in \bN_0}e^{-mt} \left|\cF_{\mr{euc}}\left((i\l-\gamma_r)^{n_2}\frac{\partial^{n_1}}{\partial \l^{n_1}}(\Gamma_m(i\l-\gamma_r)[\phi(i\l-\gamma_r)]_w) \right)\right|
    \end{align*}

By assumption $\phi$ has no poles on $\Re \l = -\gamma_r$ and $\|\phi^{(n)}(\l)\| \leq M (1+|\l|)^{-k}$ for large enough $k$ and any $n$ on $\Re \l = -\gamma_r$. Furthermore, $\Gamma_m$ has no poles on $\Re \l = -\gamma_r$ by Lemma~\ref{lemma:gamma}, using Cauchy's integral formula we get 
\[|\Gamma_m^{(n)}(i\lambda -\gamma_r)| \leq M (1+m)^\chi, \qquad \l \in \R{}.\]
    Thus,
    \begin{align*}
        \tau_{n_1,n_2}(\cI_r \phi)  & \leq M \sup_{t \geq \delta} \sum_{m\in \bN_0}e^{-mt}\int_{i\R{}}(1+|\l-\gamma_r|)^{n_2+2}(1+m)^{\chi} \sum_{i=0}^{n_1}\left\|\frac{\partial^i}{\partial \l^i}\phi(\l-\gamma_r)\right\| (1+|\l-\gamma_r|)^{-2} d\l \\
    & \leq M_\delta \sup_{\l\in \Re \l = -\gamma_r} (1+|\l|)^{n_2+2}\sum_{i=0}^{n_1}\left|\left|\frac{\partial^i}{\partial \l^i}\phi(\l)\right|\right| \int_{i\R{}}(1+|\l-\gamma_r|)^{-2} d\l<\infty.
\end{align*}
 Here, $M_\delta = M \sum_{m \in \bN_0}e^{-m\delta}(1+m)^\chi < \infty$. We thus conclude from (\ref{defn:schwartz}) that $\cI_r \phi$ is in $\mathcal{C}^r(X)^K$. 
\end{proof}

Consider the polynomial 
\[\pi(\l) = \prod_{\l_k \in L} (\l-\l_k^2+\rho^2).\]
Due to the fact that $\cF (\Delta f)(\l) = (\l^2-\rho^2)\cF f(\l)$, we have $\cF (\pi(\Delta) f)(\l) = \pi(\l^2 -\rho^2)\cF (f) (\l) $. 

\begin{proposition}\label{prop:qIpI}
    We have 
    \[\pi(\l^2-\rho^2)\cF \cI_r \phi (\l) = \pi(\l^2-\rho^2) \cF \cJ \phi (\l) = \pi(\l^2-\rho^2)\phi(\l) \]
\end{proposition}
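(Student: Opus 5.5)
Start from the decomposition (\ref{eq: IpI}) of Lemma~\ref{lemma:Sp}:
\[
\cJ\phi(a_tw\cdot x_0)=\cI_r\phi(a_tw\cdot x_0)+4\pi i\sum_{\l_k\in L_r}\Phi^0_{-\l_k}(t)\,\mr{Res}_{\l=-\l_k}[\phi(\l)]_w .
\]
All three pieces lie in $\mathcal{C}^2(X)^K$. Indeed, $\cJ\phi\in\mathcal{C}^2(X)^K$ by Theorem~\ref{thm:S2}, since $\phi|_{i\R{}}\in\sS(i\R{})_e$. Also $\Phi^0_{-\l_k}\in\mathcal{C}^2(X)^K$ by Lemma~\ref{lem:Phi^0}, because every $\l_k\in L$ satisfies $\l_k>0=\gamma_2$. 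Consequently $\cI_r\phi\in\mathcal{C}^2(X)^K$, and we may apply $\cF$ on $i\R{}$ to the identity term by term.

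\textbf{Step 1: the middle term.} Apply $\cF$ and use Theorem~\ref{thm:S2}(3). This gives $\cF\cJ\phi(\l)=\phi(\l)$ on $i\R{}$, hence $\pi(\l^2-\rho^2)\cF\cJ\phi(\l)=\pi(\l^2-\rho^2)\phi(\l)$. This is the second equality in the proposition.

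\textbf{Step 2: the residual functions.} It remains to show that $\pi(\l^2-\rho^2)\cF\Phi^0_{-\l_k}(\l)=0$ for each $\l_k\in L_r$. There are two routes.

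\emph{Route A.} Use (\ref{ft:discrete}) directly. The self-adjointness computation there shows $\cF\Phi^0_{-\l_k}(\l)=0$ for $\l\in i\R{}$, because $\l^2\neq\l_k^2$. This already gives the claim even without the factor $\pi$.

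\emph{Route B.} The factor $\pi$ is there to make the argument robust. The function $\Phi^0_{-\l_k}$ is an eigenfunction of $L(\Delta)$ with eigenvalue $\l_k^2-\rho^2$, so the corresponding factor of $\pi(\Delta)$ annihilates it and $\pi(\Delta)\Phi^0_{-\l_k}=0$. Then
\[
\pi(\l^2-\rho^2)\cF\Phi^0_{-\l_k}(\l)=\cF\bigl(\pi(\Delta)\Phi^0_{-\l_k}\bigr)(\l)=0 .
\]
I would present Route B, since it uses only the intertwining identity $\cF(\Delta f)=(\l^2-\rho^2)\cF f$ on $\mathcal{C}^2(X)^K$. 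Justifying that identity needs $\Delta$ to be symmetric on $\mathcal{C}^2$ against the bounded eigenfunctions $\eE(-\l,\eta)$, $\l\in i\R{}$. That follows by integrating by parts in $t$ with the weight $J(t)$: the boundary terms vanish at $t=0$ because $J(0)=0$ or the functions are smooth, and at infinity by the Schwartz decay combined with the bound in Lemma~\ref{lemma:est}.

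\textbf{Step 3: combine.} Multiply the transformed identity by $\pi(\l^2-\rho^2)$. This yields
\[
\pi(\l^2-\rho^2)\cF\cI_r\phi(\l)=\pi(\l^2-\rho^2)\cF\cJ\phi(\l)=\pi(\l^2-\rho^2)\phi(\l)\quad\text{for }\l\in i\R{}.
\]

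\textbf{Main obstacle.} Two points need care. The first is the eigen-relation for the $\l$-independent functions $\Phi^0_{-\l_k}$ at the exceptional parameters $-\l_k$, where $\Phi_\l$ itself may not be defined. The ODE holds for $\Phi^0_\l$ as a hypergeometric function for all $\l$, so I would invoke that directly. The second is the integration-by-parts justification in Step 2. Everything else is a formal consequence of (\ref{eq: IpI}) and Theorem~\ref{thm:S2}.
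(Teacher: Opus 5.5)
Your proposal is correct and is essentially the paper's proof: decompose $\cJ\phi$ by (\ref{eq: IpI}), kill the residual terms $\Phi^0_{-\l_k}$, $\l_k\in L_r$, using $\pi(\Delta)\Phi^0_{-\l_k}=0$ together with $\cF(\pi(\Delta)f)=\pi(\l^2-\rho^2)\cF f$, and conclude with $\cF\cJ\phi=\phi$ from Theorem~\ref{thm:S2}(3). Your Route A observation is also correct: $\pi(\l^2-\rho^2)=\prod_{\l_k\in L}(\l^2-\l_k^2)$ does not vanish on $i\R{}$, so (\ref{ft:discrete}) already gives $\cF\cI_r\phi=\phi$ on $i\R{}$, which is the only place where either your argument or the paper's proves the identity.
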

\begin{proof}
We first note that \bes \Delta \Phi_{-\l_k}^0(t) = (\l_k^2-\rho^2)\Phi_{-\l_k}^0(t),\ees   for $\l_k \in L_r$. Thus, $\pi(\Delta) \Phi_{-\l_k}^0(t) = 0$ for all $\l_k \in L_p$. From (\ref{eq: IpI}) we have
    \begin{align*}
      \pi(\l^2-\rho^2) \cF \cJ \phi (\l) &= \cF \left[\pi(\Delta) \cI_r \phi (\l) + 4\pi i \sum_{\l_k \in L_p} \mr{Res}_{\l = -\l_k}  [\phi(\l)]_w \pi(\Delta) \Phi_{-\l_k}^0(t)\right]\\
      &= \pi(\l^2-\rho^2)\cF \cI_r \phi (\l). 
    \end{align*}
Since $\phi \in \sS(S_r)_e \subseteq \sS(i\R{})_e$. Thus, from part (3) of Theorem~\ref{thm:S2} we conclude
\[\pi(\l^2-\rho^2)\cF \cI_r \phi (\l) = \pi(\l^2-\rho^2) \cF \cJ \phi (\l) = \pi(\l^2-\rho^2)\phi(\l).\]
\end{proof}

\begin{lemma}
    The Fourier transform $\cF: \mathscr{S}^p(X)^K \longrightarrow \mathscr{S}(S_r)_e$ is a surjective linear  map. 
\end{lemma}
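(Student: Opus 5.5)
Given $\phi \in \sS(S_r)_e$, the natural preimage is $f := \cI_r\phi$. By Lemma~\ref{lemma:Sp} this function lies in $\mathcal{C}^r(X)^K$, so it suffices to show $\cF\cI_r\phi = \phi$ on $S_r$. Since both sides are (after multiplication by $p_r$) holomorphic on $S_r^\circ$ and continuous up to the boundary, it is enough to prove the identity on $i\R{}$ and then extend it by analytic continuation (identity theorem for meromorphic functions on the connected open strip $S_r^\circ$).

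On $i\R{}$, Proposition~\ref{prop:qIpI} gives
\[\pi(\l^2-\rho^2)\,\cF\cI_r\phi(\l) = \pi(\l^2-\rho^2)\,\phi(\l).\]
The polynomial $\l\mapsto\pi(\l^2-\rho^2)$ vanishes only where $\l^2 = \l_k^2$, that is at $\l=\pm\l_k$ with $\l_k\in L$. These points are real and nonzero, so $\pi(\l^2-\rho^2)\neq 0$ for all $\l\in i\R{}$. Dividing gives $\cF\cI_r\phi(\l)=\phi(\l)$ for every $\l\in i\R{}$.

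To extend to all of $S_r$, note that $\cF(\cI_r\phi)\in\sS(S_r)_e$ by the earlier lemma on continuity of $\cF$, because $\cI_r\phi\in\mathcal{C}^r(X)^K$. Hence $p_r\,\cF\cI_r\phi$ and $p_r\,\phi$ are both holomorphic on $S_r^\circ$, continuous on $S_r$, and agree on $i\R{}$. Since $\epsilon_0>0$, the line $i\R{}$ lies in the interior of $S_r$ (for $r<2$; when $r=2$ we have $\gamma_2=0$, so $S_2$ reduces to a thin strip containing $i\R{}$, and the identity on $i\R{}$ is exactly what is needed). The identity theorem then gives equality on $S_r^\circ$, and continuity extends it to $S_r$. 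This shows $\phi=\cF(\cI_r\phi)$, which proves surjectivity and simultaneously establishes part (3) of Theorem~\ref{thm:L-r-schwartz}, namely $\cF\cI_r=\mathrm{Id}$.

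The main obstacle is the dependence on Proposition~\ref{prop:qIpI}. That proposition requires $\cF$ to commute with $\pi(\Delta)$ applied to $\cI_r\phi$, and requires $\pi(\Delta)$ to annihilate the residual terms $\Phi^0_{-\l_k}$ appearing in (\ref{eq: IpI}). I would check that the relation $\cF(\Delta f)=(\l^2-\rho^2)\cF f$ holds on $\mathcal{C}^2(X)^K$ by symmetry of $\Delta$. The boundary terms in the integration by parts vanish because $f$ has decay $e^{-\rho t}(1+t)^{-n}$ while $\eE(-\l,\eta)$ is bounded by $e^{-\rho t}(1+t)$ against the Jacobian $J(t)\sim e^{2\rho t}$. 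Everything else is formal.
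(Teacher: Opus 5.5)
Your proposal is correct and follows the paper's proof: take $\cI_r\phi$ as the preimage, invoke Lemma~\ref{lemma:Sp} and Proposition~\ref{prop:qIpI}, and cancel the polynomial $\pi(\l^2-\rho^2)=\prod_{\l_k\in L}(\l^2-\l_k^2)$. Your one refinement is to divide only on $i\R{}$, where this polynomial has no zeros, and then extend to $S_r$ by the identity theorem using $\cF\cI_r\phi\in\sS(S_r)_e$; this makes explicit a step the paper leaves implicit, since Theorem~\ref{thm:S2}(3) only gives $\cF\cJ\phi=\phi$ on $i\R{}$.
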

\begin{proof}
    Let $\phi \in \mathscr{S}(S_r)_e$. From Lemma~\ref{lemma:Sp} we have $\cI_r \phi \in \mathcal{C}^p(X)^K$. We will show that $\cF (\cI_r \phi) = \phi$. It follows from Proposition~\ref{prop:qIpI} that 
    \[\pi(\l^2-\rho^2)\cF \cI_r \phi (\l) = \pi(\l^2-\rho^2)\cF \cJ \phi (\l) = \pi(\l^2-\rho^2)\phi(\l).\]
    By dividing the polynomial, we obtain $\cF \cI_r \phi = \phi$ everywhere on $S_r$ except at the poles $\l_k \in L_r$ and $\cF \cI_r \phi$ and $\phi$ have simple poles at $\l_k$. Thus, 
    the surjectivity of the Fourier transform follows.
\end{proof}

\begin{lemma} We have the following:
\begin{enumerate}
   \item Let $1\leq r\leq 2$. \begin{enumerate}
        \item Suppose that $L$ is non empty set. Then the kernel of $\cF: \mathcal{C}^r(X)^K \longrightarrow \sS(S_r{})_e$ is the span of the functions $\{\Phi^0_{-\l_k}(t): \l_k \in L_r^c\}$. 
        \item If $L$ is an empty set then $\cF$ is an injective linear  map .
    \end{enumerate}
    
    \item If $0<r<1$, then $\cF$ is an injective linear  map. 
\end{enumerate}
    
\end{lemma}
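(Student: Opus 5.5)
The plan is to reduce everything to the $L^2$ kernel description already proved, using the inclusion $\mathcal{C}^r(X)^K\subseteq \mathcal{C}^2(X)^K$ for $r\le 2$. The key point is that the Fourier transform on $\mathcal{C}^r(X)^K$ into $\sS(S_r)_e$ is the meromorphic continuation (from $i\R{}$) of the $L^2$ Fourier transform. Because $p_r\cF f$ is holomorphic on $S_r^\circ$ and continuous up to the boundary, $\cF f$ vanishes identically on $S_r$ as soon as it vanishes on $i\R{}$. Hence
\[\ker\bigl(\cF|_{\mathcal{C}^r(X)^K}\bigr)=\ker\bigl(\cF|_{\mathcal{C}^2(X)^K}\bigr)\cap \mathcal{C}^r(X)^K .\]

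\emph{Case $L=\emptyset$.} By the preceding theorem on the $L^2$ kernel, $\cF$ is injective on $\mathcal{C}^2(X)^K$, so it is injective on the subspace $\mathcal{C}^r(X)^K$ for every $0<r\le 2$. This settles (1)(b), and also (2) when $L$ is empty.

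\emph{Case $L\neq\emptyset$.} If $f\in\mathcal{C}^r(X)^K$ lies in the kernel, then by the $L^2$ result
\[f=\sum_{\l_k\in L}c_k\Phi^0_{-\l_k}\]
for some constants $c_k$, namely $c_k=4\pi i\,\mr{Res}_{\l=-\l_k}[\cF f(\l)]_w$ from Theorem~\ref{thm:FISchwartz}. I then need to show that $c_k=0$ whenever $\l_k\in L_r$. I intend to use asymptotics. By (\ref{eqn:phi^0}), $\Phi^0_{-\l_k}(t)=e^{-(\l_k+\rho)t}(1+O(e^{-2t}))$, and the exponents $\l_k$ are distinct. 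Multiplying by $e^{(2/r)\rho t}$ turns each term into $e^{(\gamma_r-\l_k)t}(1+o(1))$. This grows exactly when $\l_k<\gamma_r$; equality $\l_k=\gamma_r$ is excluded by the hypothesis that $\gamma_r$ is not a singularity. Order the terms with $\l_k\in L_r$ by increasing $\l_k$. If $\l_{k_0}$ is the smallest such exponent with $c_{k_0}\neq 0$, it dominates the whole sum as $t\to\infty$, because the other terms decay relative to it, and the $L_r^c$ terms are even bounded after weighting. That would contradict $\tau_{0,1}(f)<\infty$. So only the $\l_k\in L_r^c$ survive, and $f$ lies in the span of $\{\Phi^0_{-\l_k}:\l_k\in L_r^c\}$.

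For the converse, Lemma~\ref{lem:Phi^0} gives $\Phi^0_{-\l_k}\in\mathcal{C}^r(X)^K$ for $\l_k\in L_r^c$. Equation (\ref{ft:discrete}) shows that its Fourier transform vanishes on $i\R{}$, and by the continuation remark it vanishes on all of $S_r$. This proves (1)(a).

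For (2), with $0<r<1$ we have $\gamma_r>\rho>\l_k$ for all $k$, so $L_r=L$ and $L_r^c=\emptyset$. The asymptotic argument above then forces every $c_k=0$, so $\cF$ is injective.

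I expect the main obstacle to be the identification of the $\sS(S_r)_e$-valued transform with the analytic continuation of the $L^2$ transform, specifically that the kernel condition only needs to be tested on $i\R{}$. I would handle this by noting that the defining integral converges on $S_r$ (see the estimate in the continuity lemma) and is holomorphic away from the $-\l_k$, so the identity theorem applies. The asymptotic dominance step is routine once the hypergeometric factor is seen to be a polynomial in $\cosh^{-2}t$ with constant term $1$.
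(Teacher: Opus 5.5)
Your proposal is correct and follows the paper's route: you write a kernel element, via the $L^2$ inversion formula, as $\sum_{\l_k\in L}c^w_{\l_k}\Phi^0_{-\l_k}$, and then kill the coefficients with $\l_k\in L_r$ by their growth at infinity. Your leading-term dominance argument is a direct form of the paper's peeling step, which chooses an intermediate $r'\ge r$ with $\gamma_{r'}$ separating the smallest remaining exponent from the others and then invokes Lemma~\ref{lem:Phi^0}; your observation that vanishing on $i\R{}$ forces vanishing on $S_r$ makes explicit a point the paper leaves implicit.
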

\begin{proof}
    Assume $1\leq r\leq 2$. We observed that $\l_k\in L_r^c$ implies $\Phi^0_{-\l_k}$ is in $\mathcal{C}^r(X)^K$. Also we proved in (\ref{ft:discrete}) that $\cF(\Phi^0_{-\l_k})(\l)=0$ for all $\l\in i\mathbb R$. Let $f\in \mathcal{C}^r(X)^K$ be in the kernel of $\cF$. Therefore  by the inversion formula (\ref{inversion-1}) we have \begin{equation}\label{eq:1}
        f(a_tw\cdot x_0) = \sum_{\l_k \in L}c_{\l_k}^w \Phi_{-\l_k}^0 (t),
    \end{equation}
    for some $c_{\l_k}^w\in \C$. Now let $\l'=\min\{\l: \l\in L\}$. Choose an $r'$ with $r\leq r'<2$ such that $\l'<\gamma_{r'}$ and $\l>\gamma_{r'}$ for all $\l\in L, \l\not=\l'$. Then we rewrite (\ref{eq:1}) as
    \begin{equation}
        f(a_tw\cdot x_0) = c_{\l'}^w\Phi_{-\l'}^0(t) + \sum_{\substack{\l_k \in L,\\ \l_k\not=\l'}}c_{\l_k}^w \Phi_{-\l_k}^0 (t),
    \end{equation}
    Since $\l>\gamma_{r'}$ for all $\l\in L, \l\not=\l'$, the function \bes \sum_{\substack{\l_k \in L,\\ \l_k\not=\l'}}c_{\l_k}^w \Phi_{-\l_k}^0 (t),\ees is in $\sS^{r'}(X)^K$. Also $f\in \mathcal{C}^r(X)^K$ implies  $f\in \mathcal C^{r'}(X)^K$. Hence, $c_{\l'}^w \Phi_{-\l'}^0(t)\in \mathcal C^{r'}(X)^K$. But $\l'<\gamma_{r'}$ implies $\Phi_{-\l'}^0\not\in \mathcal C^{r'}(X)^K$. Hence $c_{\l'}^w=0$. We repeat the process and will get that \begin{equation}
        f(a_tw\cdot x_0) = \sum_{\l_k \in L_r^c}c^w_{\l_k} \Phi_{-\l_k}^0 (t).
    \end{equation}
    This completes the proof. If $L$ is an empty set we use the inversion formula to imply that $\cF$ is an injective linear  map. 
    
    In the case $0<r<1$, we use the same technique to prove that all the coefficients $c_{\l_k}^w = 0$ for all $\l_k \in L$. This implies that $f(a_tw\cdot x_0) = 0$. 
\end{proof}

 Let $f \in \mathcal{C}^r(X)^K, 0 < r\leq 2$. Then we recall the following inversion formula:
    \[f(a_tw\cdot x_0) =  \int_{i\R{}} \eE_w(\lambda, \cF{ f}(\lambda))(t) d\lambda + 4\pi i\sum_{\l_k \in L} \Phi_{-\l_k}^0 (t) \mr{Res}_{\l = -\l_k}[\cF{f} (\l)]_w.\]

    Now we define (following Barker \cite{Ba88})
    \bes
f_H(a_tw\cdot x_0) =  \int_{i\R{}} \eE_w(\lambda, \cF{ f}(\lambda))(t) d\lambda + 4\pi i\sum_{\l_k \in L_r} \Phi_{-\l_k}^0 (t) \mr{Res}_{\l = -\l_k}[\cF{f} (\l)]_w.
    \ees
     and 
\bes
f_B(a_tw\cdot x_0) =   4\pi i\sum_{\l_k \in L_r^c} \Phi_{-\l_k}^0 (t) \mr{Res}_{\l = -\l_k}[\cF{f} (\l)]_w.
    \ees
    Then it follows that \bes
f=f_H + f_B,
    \ees
    and this decomposition is unique.
    We define $\mathcal C^r_H(X)^K$ as 
    \bes
\mathcal C^r_H(X)^K=\{f_H: f\in \mathcal C^r(X)^K \}.
    \ees
    Then from the previous discussion we have the following corollary:

    \begin{corollary}
      The Fourier transform $\cF: \mathcal{C}_H^r(X)^K \longrightarrow \mathscr{S}(S_r)_e$ is a continuous isomorphism.  
    \end{corollary}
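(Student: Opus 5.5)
The corollary will be assembled from the lemmas already proved for Theorem~\ref{thm:L-r-schwartz}, together with the decomposition $f=f_H+f_B$. We must show three things: $f_H\in\mathcal C^r(X)^K$, so that $\mathcal C^r_H(X)^K$ is a subspace of $\mathcal C^r(X)^K$ carrying the induced Fr\'echet topology; $\cF$ restricted to $\mathcal C^r_H(X)^K$ is injective and onto $\sS(S_r)_e$; and the inverse is continuous.

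\textbf{Step 1: $f_H\in\mathcal C^r(X)^K$ and $\cF f_H=\cF f$.} Each $\Phi^0_{-\l_k}$ with $\l_k\in L_r^c$ lies in $\mathcal C^r(X)^K$ by Lemma~\ref{lem:Phi^0}, and each lies in $\ker\cF$ by (\ref{ft:discrete}). Hence $f_B\in\mathcal C^r(X)^K\cap\ker\cF$. It follows that $f_H=f-f_B\in\mathcal C^r(X)^K$ and $\cF f_H=\cF f$.

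\textbf{Step 2: $\mathcal C^r_H(X)^K=\cI_r(\sS(S_r)_e)$.} Set $\phi=\cF f$. By (\ref{eq: IpI}),
\[\cJ\phi=\cI_r\phi+4\pi i\sum_{\l_k\in L_r}\Phi^0_{-\l_k}\,\mr{Res}_{\l=-\l_k}\phi .\]
Comparing this with the definition of $f_H$ gives $f_H=\cI_r\cF f$. Conversely, for any $\psi\in\sS(S_r)_e$ we have $\cF\cI_r\psi=\psi$ by the surjectivity lemma. Then $\cI_r\psi=(\cI_r\psi)_H$ by the identity just proved, so $\cI_r\psi\in\mathcal C^r_H(X)^K$.

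\textbf{Step 3: bijectivity.} Surjectivity holds because $\cF\cI_r=\mathrm{Id}$. For injectivity, if $f_H\in\mathcal C^r_H(X)^K$ and $\cF f_H=0$, then $f_H=\cI_r\cF f_H=0$. Equivalently, by the kernel lemma the kernel on $\mathcal C^r(X)^K$ is spanned by $\{\Phi^0_{-\l_k}:\l_k\in L_r^c\}$. An $f_H$ in the kernel has $\cF f=0$, so all its residues vanish and hence $f_H=0$.

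\textbf{Step 4: continuity.} $\cF$ is continuous by part (1) of the theorem. The inverse is $\cI_r$, which is continuous by Lemma~\ref{lemma:Sp}. To finish, note that $\mathcal C^r_H(X)^K=\mathrm{Im}\,\cI_r=\ker(\mathrm{Id}-\cI_r\cF)$ is a closed subspace, hence Fr\'echet.

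The main obstacle is Step 2, the identity $f_H=\cI_r\cF f$. It needs (\ref{eq: IpI}), which was proved for $\phi\in\sS(S_r)_e$, to apply to $\phi=\cF f$; this is legitimate since $\cF f\in\sS(S_r)_e$. It also needs the residue terms to match: the definition of $f_H$ uses residues of $\cF f$ at $-\l_k$, while (\ref{eq: IpI}) uses residues of $\phi$. These coincide because $\phi=\cF f$, provided the residue conventions and the factor $\Phi_\l$ versus $\Phi^0_\l$ agree, which Proposition~\ref{prop:serieshyp} ensures.
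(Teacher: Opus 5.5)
Your route differs from the paper's. The paper reads the corollary off from three facts: surjectivity of $\cF$, the description of $\ker\cF$ as the span of $\{\Phi^0_{-\l_k}:\l_k\in L_r^c\}$, and uniqueness of $f=f_H+f_B$. Identifying the inverse as $\cI_r$ would give you continuity of the inverse directly from Lemma~\ref{lemma:Sp}, a point the paper leaves unaddressed. However, Step~2, on which that identification rests, fails as written. With $\phi=\cF f$, both (\ref{eq: IpI}) and the definition of $f_H$ carry the same term $+4\pi i\sum_{\l_k\in L_r}\Phi^0_{-\l_k}\,\mr{Res}_{\l=-\l_k}[\phi(\l)]_w$. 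Substituting one into the other therefore gives
\[
f_H=\cI_r\cF f+8\pi i\sum_{\l_k\in L_r}\Phi^0_{-\l_k}\,\mr{Res}_{\l=-\l_k}[\cF f(\l)]_w ,
\]
so the residue terms add instead of cancelling. Proposition~\ref{prop:serieshyp} only identifies $\Phi_\l$ with $\Phi^0_\l$ and says nothing about this sign. Nor is this a harmless convention: when $L_r\neq\emptyset$ the two printed formulas are incompatible. By Lemmas~\ref{lemma:Sp} and \ref{lem:Phi^0}, $f$, $\cI_r\cF f$ and $f_B$ all lie in $\mathcal C^r(X)^K$. The display together with $f=f_H+f_B$ would then force $\sum_{\l_k\in L_r}\Phi^0_{-\l_k}\,\mr{Res}_{\l=-\l_k}[\cF f]_w\in\mathcal C^r(X)^K$. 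No nontrivial combination of the $\Phi^0_{-\l_k}$ with $\l_k<\gamma_r$ can lie there: the term with smallest $\l_k$ dominates like $e^{-(\l_k+\rho)t}$, and $e^{(2/r)\rho t}e^{-(\l_k+\rho)t}=e^{(\gamma_r-\l_k)t}\to\infty$. All these residues would then vanish for every $f$, which is false; for $f\in C_c^\infty(X)^K$ they are nonzero multiples of $\int_0^\infty f\,\Phi^0_{-\l_k}J\,dt$. So the discrete term in the inversion formula, and hence in $f_H$, must carry the sign opposite to the contour-shift term in (\ref{eq: IpI}). Your Step~2 needs exactly this fact. Once it is established (for instance by the argument just given), the comparison does yield $f_H=\cI_r\cF f$, and the rest of your Steps~2--4 is then correct.

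Alternatively, you can avoid the sign question altogether, which is what the paper's route implicitly does.
\begin{itemize}
\item \textbf{Surjectivity.} Step~1 gives $\cF((\cI_r\psi)_H)=\cF\cI_r\psi=\psi$ for every $\psi\in\sS(S_r)_e$.
\item \textbf{Injectivity.} Your second argument in Step~3 does not depend on the sign. If $\cF f_H=\cF f=0$ on $S_r$, then $\cJ\cF f=0$ and the residues at the interior points $-\l_k$, $\l_k\in L_r$, vanish, so $f_H=0$.
\item \textbf{Continuity of the inverse.} The inverse is $\psi\mapsto\cI_r\psi-(\cI_r\psi)_B$. Its continuity follows from Lemma~\ref{lemma:Sp} once $f\mapsto f_B$ is continuous on $\mathcal C^r(X)^K$. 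The coefficients at $\l_k\in L_r^c$ lie outside $S_r$, so they cannot be literal residues of $\cF f$. They must be read as multiples of $\int_0^\infty f\,\Phi^0_{-\l_k}J\,dt$. These are bounded by the seminorms $\tau_{n,D}$, since the integrand is $O((1+t)^{-n}e^{-(\gamma_r+\l_k)t})$.
\end{itemize}
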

    


\bibliographystyle{plain} 
\bibliography{ref}

\end{document}